\documentclass{amsart}
\allowdisplaybreaks

\setcounter{tocdepth}{1}

\usepackage{amsmath}
\usepackage{amsfonts}
\usepackage{amssymb}
\usepackage{graphicx}

\usepackage{soul}
\usepackage{todonotes}

\usepackage{amsthm}
\usepackage{graphicx}

\usepackage{mathrsfs}
\usepackage[all]{xy}
\usepackage{hyperref}

\DeclareMathOperator{\Spec}{Spec}

\newcommand{\FF}{\mathbb{F}}
\DeclareMathOperator{\PSL}{PSL}
\DeclareMathOperator{\Pic}{Pic}

\DeclareMathOperator{\GL}{GL}
\DeclareMathOperator{\Sp}{Sp}
\DeclareMathOperator{\diag}{diag}

\DeclareMathOperator{\Aut}{Aut}
\DeclareMathOperator{\Sing}{Sing}
\newcommand{\sh}{\mathscr}
\newcommand{\ZZ}{\mathbb{Z}}
\newcommand{\CC}{\mathbb{C}}
\newcommand{\PP}{\mathbb{P}}
\newcommand{\M}{\mathcal{M}}

\newcommand{\J}{\mathcal{J}}
\newcommand{\R}{\mathcal{R}}

\newcommand{\KM}[1]{V_4\mathcal{M}_{#1}}
\newcommand{\ZM}[1]{\mathbb{Z}_2\mathcal{M}_{#1}}
\newcommand{\ZZM}[1]{\mathbb{Z}_2^2\mathcal{M}_{#1}}

\renewcommand{\S}{\mathcal{S}}

\newtheorem*{rep@theorem}{\rep@title}
\newcommand{\newreptheorem}[2]{%
\newenvironment{rep#1}[1]{%
 \def\rep@title{#2 \ref{##1}}%
 \begin{rep@theorem}}%
 {\end{rep@theorem}}}
\makeatother

\newreptheorem{theorem}{Theorem}

\newtheorem{theorem}{Theorem}[section]
\theoremstyle{plain}
\newtheorem{corollary}[theorem]{Corollary}
\newtheorem{definition}[theorem]{Definition}

\newtheorem{lemma}[theorem]{Lemma}
\newtheorem{proposition}[theorem]{Proposition}
\newtheorem{remark}[theorem]{Remark}

\begin{document}

\title{The Moduli of Klein Covers of Curves}
\author{Charles Siegel}
\address[Charles Siegel]{Kavli Institute for the Physics and Mathematics of the Universe (WPI), Todai Institutes for Advanced Study, the University of Tokyo}
\email[Charles Siegel]{charles.siegel@ipmu.jp}%
\urladdr{http://db.ipmu.jp/member/personal/2754en.html}
\date{}
\subjclass{} 
\keywords{}

\begin{abstract}
We study the moduli space $\KM{g}$ of Klein four covers of genus $g$ curves and its natural compactification.  This requires the construction of a related space which has a choice of basis for the Klein four group.  This space has the interesting property that the two components intersect along a component of the boundary.  Further, we carry out a detailed analysis of the boundary, determining components, degrees of the components over their images in $\overline{\M_g}$, and computing the canonical divisor of $\overline{\KM{g}}$.
\end{abstract}

\maketitle

\tableofcontents

\section*{Introduction}

Ever since the nineteenth century, unramified double covers have been an essential tool for studying curves.  They correspond to square roots of the trivial line bundle which form a group.  This formulation can be used to study surface groups and the mapping class group, and also theta characteristics, the square roots of the canonical line bundle.  There are intricate relationships between double covers and theta characteristics, in particular, and the difference between them only become completely clear after passing to moduli.

The moduli of double covers of curves has two components, one that is isomorphic to $\M_g$, where the double cover is a disconnected union of two copies of the base curve, and one where the double cover is nontrivial, traditionally denoted by $\R_g$.  While the moduli of theta characteristics also has two components, neither one is isomorphic to $\M_g$.  The components, $\S_g^\pm$, correspond to whether the theta characteristic has an even or odd dimensional space of global sections.\footnote{The notations come from the French for covering, rev\^etement and from the fact that curves with theta characteristics are often called spin curves, due to connections with the quantum mechanical notion of spin.}

Another, slightly more subtle, connection between the two moduli spaces is that the theta characteristics on a curve correspond to quadratic forms on the ($\FF_2$-vector space of) points of order two on the curve.  The quadratic form is given by, if $L$ is a theta characteristic, $\mu\mapsto h^0(L\otimes \mu)-h^0(L)\mod 2$, and induces a skew-symmetric bilinear form on the points of order two.  This bilinear form is independent of the theta characteristic chosen and is called the Weil pairing.  The Weil pairing, however, is really an invariant of a Klein four subgroup of the Jacobian, as $\langle \mu,\nu\rangle=\langle\mu,\mu+\nu\rangle$, and in fact, if $\{0,\mu_1,\mu_2,\mu_3\}$ is a Klein four subgroup of $\J(C)[2]$, then the Weil pairing on the group has the value $h^0(L\otimes\sh{O}_C)+h^0(L\otimes \mu_1)+h^0(L\otimes \mu_2)+h^0(L\otimes \mu_3)\mod 2$, which is manifestly symmetric in the group elements, suggesting that it could be clarified by studying the moduli of Klein covers.

The boundaries of these moduli spaces have been studied in detail, and the fibers of the natural projection to $\overline{\M_g}$ have been made very explicit.  The approach originates in {\cite{MR1082361}} for $\overline{\S_g}^{\pm}$ and is pushed through in detail in {\cite{MR2007379}}, and adapted to $\overline{\R_g}$ in {\cite{MR2117416}}.  More recently, this approach has been adapted to proving that pluricanonical forms extend to both $\overline{\S_g}^{\pm}$ {\cite{MR2551759}} and to $\overline{\R_g}$ {\cite{MR2639318}}, bringing the study of the birational geometry of these spaces into reach.

\subsection*{This paper}

In this paper, we extend the description of the boundary and pluricanonical forms to the moduli of Klein covers of curves.  This, however, is difficult to do directly, so instead we introduce an intermediate moduli space, $\overline{\ZZM{g}}$, of pairs of Prym curves and study it, then use the relationship between it and the moduli of Klein covers $\overline{\KM{g}}$ to prove the results on this space.  In fact, we use this relationship to define $\overline{\KM{g}}$.

In section 1 of this paper, we recall relevant facts about double covers. In particular, the classification of points in the fiber over a stable curve from {\cite{MR2117416}}, and the relationship between two competing notations for the components of the boundary of $\overline{\R_g}$, used in, for instance, {\cite{MR903385}} and \cite{MR2976944}.

In section 2, we initiate the study of $\overline{\ZZM{g}}$, focusing on the interior.  We construct the space and show that there are two connected components, corresponding to Weil pairing 0, which was studied in \cite{1302.5946} under the notation $\mathcal{R}^2\mathcal{M}_g$, and Weil pairing 1, and the degree of each component over $\M_g$, reproducing a result in {\cite{1206.5498}}, which holds in the degenerate case where the dihedral group is only four elements.

In section 3, we analyze the boundary of $\overline{\ZZM{g}}$ in detail, describing the fibers over $\overline{\M_g}$, then identify the boundary components and determine how many objects in each fiber lie in each component.  Here, we note an interesting fact.  Although $\ZZM{g}$ is an unramified covering of $\M_g$ and has two components, the natural compactification $\overline{\ZZM{g}}$ is in fact connected, and the boundaries of the two components intersect nicely along a single component.

In section 4, we proceed to analyzing $\overline{\KM{g}}$ and its boundary.  We do so by showing that the group action of $\PSL_2(\FF_2)$ on $\ZZM{g}$ extends to the boundary of each of the two components separately, identifying several components of $\partial\overline{\ZZM{g}}$.  This allows us to describe the (much simple) boundary of $\overline{\KM{g}}$and to show that the natural map $\overline{\KM{g}}\to \overline{\M_g}$ is simply ramified along a single boundary component.

In the last section, we follow \cite{MR2639318}, \cite{MR2551759} and \cite{MR664324}, to extend the pluricanonical forms from the smooth locus to an arbitrary resolution of singularities.  The main tool in this is the Reid--Shepherd-Barron--Tai criterion \cite{MR605348,MR763023}.  We conclude with a slope criterion for $\overline{\KM{g}}^i$ to be of general type analagous to similar results for $\overline{\M}_g$ and $\overline{\R_g}$:

\begin{reptheorem}{maintheorem}
For any $g$, $\overline{\KM{g}}^i$ has general type if there exists a single effective divisor $D\equiv a\lambda-\sum_T b_{\Delta_T}\Delta_T$ where $T$ runs over all boundary components, such that all the ratios $\frac{a}{b_T}$ are less than $\frac{13}{2}$ and the ratios $\frac{a}{b_{II,III,III}}$, $\frac{a}{b_{1,g-1,1:g-1}}$, $\frac{a}{b_{1,1,1}}$, $\frac{a}{b_{g-1,g-1,g-1}}$, $\frac{a}{b_{1,1:g-1,1:g-1}}$, $\frac{a}{b_{g-1,1:g-1,1:g-1}}$, and $\frac{a}{b_{1:g-1,1:g-1,1:g-1}}$ are less than $\frac{13}{3}$.
\end{reptheorem}

\subsection*{Acknowledgments}
I would like to thank Gavril Farkas, for suggesting that the birational geometry of this space might be interesting, as well as for conversations on the relationship between the Weil pairing, theta characteristics and Klein four curves.  Also I would like to thank Angela Gibney, Joe Harris, Tyler Kelly and Angela Ortega for helpful discussions on the moduli of curves, coverings, the Weil pairing and birational geometry and Amir Aazami and Jesse Wolfson for comments on an earlier draft.  This work was supported by World Premier International Research Center Initiative (WPI Initiative), MEXT, Japan.

\section{Background}

In this section, we will recall relevant facts about double covers and points of order two on curves.  For $C$ a smooth projective curve over $\CC$, we denote by $\J(C)$ the group of line bundles of degree $0$ on $C$.  It has a natural subgroup $\J(C)[2]$ consisting of the elements whose square is trivial.

\begin{lemma}
\label{lemma:prymdefs}
The following data are equivalent:

\begin{enumerate}
 \item $\tilde{C}\to C$ an irreducible \'etale double cover,
 \item $\mu\in \J(C)[2]$ nonzero, and
 \item $\tilde{C}\in\M_{2g-1}$ with $\iota:\tilde{C}\to\tilde{C}$ a fixed point free involution.
\end{enumerate}
\end{lemma}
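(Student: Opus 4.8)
The plan is to establish the two equivalences $(1)\Leftrightarrow(3)$ and $(1)\Leftrightarrow(2)$ separately, in each case exhibiting constructions mutually inverse up to isomorphism so that the three kinds of data genuinely correspond. For $(1)\Leftrightarrow(3)$: given an irreducible \'etale double cover $\pi\colon\tilde C\to C$, it is Galois with group $\ZZ/2$, generated by an involution $\iota$; since $\pi$ is unramified, a fixed point of $\iota$ would be a ramification point of the degree-two map $\pi$, so $\iota$ is fixed point free, and Riemann--Hurwitz for the \'etale $\pi$ gives $2g(\tilde C)-2=2(2g-2)$, hence $g(\tilde C)=2g-1$ and $\tilde C\in\M_{2g-1}$. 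Conversely, given $\tilde C$ of genus $2g-1$ with a fixed point free involution $\iota$, the quotient $C:=\tilde C/\iota$ is a smooth projective curve, $\tilde C\to C$ is \'etale of degree two with irreducible source, and Riemann--Hurwitz now forces $g(C)=g$; the two assignments are inverse because the deck group of $\tilde C\to\tilde C/\iota$ is generated by $\iota$ and, conversely, the deck involution of a connected \'etale double cover recovers $\iota$.

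For $(1)\Leftrightarrow(2)$: from $\pi\colon\tilde C\to C$ as above, $\pi_*\sh{O}_{\tilde C}=\sh{O}_C\oplus\mu$ is the decomposition into $\pm1$-eigensheaves of $\iota$; the multiplication map $\mu^{\otimes2}\to\sh{O}_C$ is an isomorphism precisely because $\pi$ is unramified, so $\mu\in\J(C)[2]$, and $\mu\ne0$ since a trivializing section of $\mu$ would split the cover, contradicting irreducibility. Conversely, given nonzero $\mu\in\J(C)[2]$, a choice of isomorphism $\mu^{\otimes2}\xrightarrow{\sim}\sh{O}_C$ makes $\sh{O}_C\oplus\mu$ a sheaf of $\sh{O}_C$-algebras, and $\tilde C:=\Spec_C(\sh{O}_C\oplus\mu)\to C$ is finite flat of degree two, \'etale over $\CC$ because the multiplication is an isomorphism, and connected---hence irreducible---exactly because $\mu$ has no trivializing section; taking $\iota$-eigensheaves of this cover returns $\mu$, and the resulting $\tilde C$ is independent of the chosen trivialization up to isomorphism, so the two constructions are again mutually inverse.

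I do not anticipate a real obstacle, as this is classical; the only points wanting care are the dictionary between nontriviality of $\mu$ and connectedness of $\tilde C$, and checking that the composites of the constructions are the identity on isomorphism classes rather than merely landing in the correct type.
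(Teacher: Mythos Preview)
Your proof is correct and follows essentially the same route as the paper's, which merely sketches the classical argument: for $(1)\Leftrightarrow(2)$ the paper invokes $\underline{\Spec}(\sh{O}_C\oplus\mu)$ in one direction and in the other notes that a double cover singles out the unique nonzero $2$-torsion class pulling back to zero, while for $(1)\Leftrightarrow(3)$ it simply records $C\cong\tilde C/\iota$. Your version supplies the details the paper omits---Riemann--Hurwitz for the genus count, the eigensheaf decomposition of $\pi_*\sh{O}_{\tilde C}$, and the connectedness/nontriviality dictionary---and your recovery of $\mu$ as the $(-1)$-eigensheaf is of course equivalent to the paper's characterization of $\mu$ as the kernel of $\pi^*$ on $2$-torsion.
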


\begin{proof}
Given a point of order two, we get an unramified double cover by looking at $\underline{\Spec}(\sh{O}_C\oplus \mu)$.  Conversely, given a double cover, there is a single point of order two that pulls back to zero.

To get between 1 and 3, we note that $C\cong \tilde{C}/\iota$.
\end{proof}

By Lemma \ref{lemma:prymdefs}, we have an equivalence between $\J(C)[2]$ and the set of \'etale double covers, with 0 corresponding to the trivial double cover.  This induces a group structure on double covers, and if $\tilde{C}_\mu,\tilde{C}_\nu$ correspond to $\mu,\nu$, then $\tilde{C}_{\mu+\nu}$ is given by $\tilde{C}_\mu\times_C\tilde{C}_\nu/(\iota_\mu,\iota_\nu)$.

\begin{definition}[quasistable curve]
A genus $g\geq 2$ curve $X$ is \emph{quasistable} if every smooth rational component has at least two nodes and no two of these components intersect. We call the stable curve $C$ obtained by removing these rational components and gluing the nodes together the stabilization of $X$, and the nodes of $C$ obtained this way are the exceptional nodes and the rational components of $X$ over them exceptional components.
\end{definition}

We define the nonexceptional curve to be the union of the nonexceptional components and denote it by $X_{ne}$.

\begin{definition}[Prym curve]
A \emph{Prym curve} is a triple $(X,\eta,\beta)$ where $X$ is quasistable, $\eta\in\J(X)$ such that for all exceptional components, $E$, we have $\eta_E\cong\sh{O}_E(1)$, and $\beta:\eta^{\otimes 2}\to\sh{O}_X$ is a homomorphism that is generically nonzero on each nonexceptional component.
\end{definition}

\begin{remark}
In the notation we will use for other objects, a Prym curve would be called a $\ZZ/2\ZZ$ curve, but we will continue to refer to them as Prym curves.
\end{remark}

\begin{definition}[Isomorphism of Prym Curves]
An isomorphism of Prym curves $(X,\eta,\beta)$ and $(X',\eta',\beta')$ is an isomorphism $\sigma:X\to X'$ such that there exists an isomorphism $\tau:\sigma^*(\eta')\to \eta$ such that the diagram commutes:

\begin{center}
\leavevmode
\begin{xy}
(0,0)*+{\sigma^*(\sh{O}_{X'})}="a";
(20,0)*+{\sh{O}_X}="b";
(0,20)*+{\sigma^*(\eta')^{\otimes 2}}="c";
(20,20)*+{\eta^{\otimes 2}}="d";
{\ar^{\sim} "a";"b"};
{\ar^{\tau^{\otimes 2}} "c";"d"};
{\ar_{\sigma^*(\beta')} "c";"a"};
{\ar^{\beta} "d";"b"};
\end{xy}
\end{center}
\end{definition}

Note that the definition of isomorphism of Prym curves does not depend on what $\tau$ is chosen, only on $\sigma$.

Any automorphism of a Prym curve which induces the identity on the stable model of $X$ will be called \textit{inessential}.  The group of automorphisms will be denoted by $\Aut(X,\eta,\beta)$ and the inessential automorphisms will be $\Aut_0(X,\eta,\beta)$.

For the rest of this paper, for any curve, denote by $\nu$ the normalization morphism for a curve, and denote by $g^\nu$ the geometric genus of the normalization:

\begin{proposition}[{\cite[Proposition 11]{MR2117416}}]
Let $X$ be a quasistable curve, $Z$ its stable model, $\Gamma_Z$ the dual graph of $Z$, and $\Delta_X$ the set of nodes not lying under an exceptional curve, and assume further that $\Delta_X^c$ is eulerian.  Then there are $2^{2g^\nu+b_1(\Delta_X)}$ Prym curves supported on $X$ and each has multiplicity $2^{b_1(\Gamma_Z)-b_1(\Delta_X)}$ in the fiber of $\R_g\to\M_g$.
\end{proposition}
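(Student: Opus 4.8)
The plan is to prove the two assertions separately: the count of Prym curves supported on $X$ by a direct computation with the Picard group of $X_{ne}$, and the multiplicity of each in the fiber by a Picard--Lefschetz analysis of the finite flat map $\overline{\R_g}\to\overline{\M_g}$ near the point in question. For the count, I would first reduce the data to $X_{ne}$. Because $\eta_E\cong\sh{O}_E(1)$ and there is no nonzero map $\sh{O}_{\PP^1}(2)\to\sh{O}_{\PP^1}$, the homomorphism $\beta$ vanishes identically on every exceptional component $E$; since $\beta$ must agree on the two branches at a node, it therefore vanishes at each of the two points where $E$ meets $X_{ne}$. Letting $D\subset X_{ne}$ be the sum of all these points --- two for each exceptional node --- a degree count (using $\deg\eta=0$) shows that $\eta_{ne}:=\eta|_{X_{ne}}$ satisfies $\eta_{ne}^{\otimes2}\cong\sh{O}_{X_{ne}}(-D)$ and that $\beta|_{X_{ne}}$ is, on each connected component, a nonzero scalar times the tautological section vanishing along $D$. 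Conversely, gluing a copy of $(\PP^1,\sh{O}(1))$ over each exceptional node of $Z$ reconstructs $(X,\eta,\beta)$, and the isomorphism class of the result is independent of the gluing chosen, any two gluings differing by an inessential automorphism; so Prym curves supported on $X$ correspond bijectively to line bundles $\eta_{ne}\in\Pic(X_{ne})$ with $\eta_{ne}^{\otimes2}\cong\sh{O}_{X_{ne}}(-D)$.

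Here the hypothesis that $\Delta_X^c$ be eulerian enters: $\sh{O}_{X_{ne}}(-D)$ admits a square root exactly when its multidegree is even on every component, i.e.\ when every component of $Z$ meets an even number of exceptional nodes, and --- $\Pic^{\underline0}(X_{ne})$ being divisible --- this is precisely the condition guaranteeing that an $\eta_{ne}$ as above exists. The set of such $\eta_{ne}$ is then a torsor under $\Pic^{\underline0}(X_{ne})[2]$, and the exact sequence $0\to(\CC^*)^{b_1(\Gamma_{X_{ne}})}\to\Pic^{\underline0}(X_{ne})\to\prod_i\Pic^0(\tilde{Z}_i)\to0$, which stays exact on $2$-torsion since the torus is divisible, gives $|\Pic^{\underline0}(X_{ne})[2]|=2^{b_1(\Gamma_{X_{ne}})}\prod_i2^{2g_i}=2^{b_1(\Delta_X)+2g^\nu}$, using that the dual graph of $X_{ne}$ is the subgraph of $\Gamma_Z$ on the edge set $\Delta_X$. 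This gives the first claim.

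For the multiplicity, note that $\overline{\R_g}\to\overline{\M_g}$ is finite and, both spaces being smooth of dimension $3g-3$, flat; hence the multiplicity of $(X,\eta,\beta)$ in the fiber over $[Z]$ is its local degree, the number of points of $\overline{\R_g}$ near $(X,\eta,\beta)$ lying over a general $[C]$ near $[Z]$. Restricting to the locus near $[Z]$ where only the nodes deform --- a punctured polydisk $(\Delta^*)^{\mathcal N}$, $\mathcal N$ the set of nodes of $Z$ with $t_n$ the smoothing parameter of $n$ --- the Prym curves over its points are classified by the $2$-torsion classes $v\in H^1(C,\FF_2)$, and by Picard--Lefschetz the monodromy around $t_n=0$ is $v\mapsto v+\langle v,\delta_n\rangle\delta_n$ for the vanishing cycle $\delta_n$. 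The class $v$ of our $(X,\eta,\beta)$ has $\langle v,\delta_n\rangle=0$ for $n\in\Delta_X$ (the Prym structure extends across a node $X$ still carries, so there is no ramification in that direction) and $\langle v,\delta_n\rangle=1$ for $n\in\Delta_X^c$ (otherwise $\eta$ would descend across $n$ and its pullback to the exceptional component over $n$ would have even rather than odd degree). So the local monodromy group is generated by the translations $v\mapsto v+\delta_n$ with $n\in\Delta_X^c$, and the local degree is $2^{\dim_{\FF_2}\langle\,\delta_n:n\in\Delta_X^c\,\rangle}$.

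Finally I would identify $\dim_{\FF_2}\langle\,\delta_n:n\in\Delta_X^c\,\rangle$ with $b_1(\Gamma_Z)-b_1(\Delta_X)$. The vanishing cycles satisfy exactly the vertex relations $\sum_{n\ \mathrm{at}\ w}\delta_n=0$ (a loop at $w$ contributing its $\delta_n$ twice, hence nothing), so $e_n\mapsto\delta_n$ identifies the span of all the $\delta_n$ with the cycle space of $\Gamma_Z$ and, for any edge set $S$, identifies the kernel of $\FF_2^S\to\langle\delta_n\rangle$ with the cuts of $\Gamma_Z$ supported on $S$; for $S=\Delta_X^c$ that kernel has dimension $c-1$, where $c$ is the number of connected components of the subgraph on $\Delta_X$, whence $\dim\langle\,\delta_n:n\in\Delta_X^c\,\rangle=|\Delta_X^c|-(c-1)=b_1(\Gamma_Z)-b_1(\Delta_X)$. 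As a check, summing $(\text{number of Prym curves})\times(\text{multiplicity})$ over the quasistable models of $Z$ must equal the generic degree $2^{2g}$ of $\overline{\R_g}\to\overline{\M_g}$, and it does: using $g=g^\nu+b_1(\Gamma_Z)$ this reduces to $\Gamma_Z$ having $2^{b_1(\Gamma_Z)}$ eulerian edge subsets, which holds since they form the cycle space. The main obstacle is the third paragraph: making the deformation theory precise enough to pin down the local monodromy --- in particular the parity $\langle v,\delta_n\rangle=1$ at exceptional nodes --- and to equate the local degree with the fiber multiplicity; the count and the final graph-theoretic identity are routine once the reduction to $X_{ne}$ is set up.
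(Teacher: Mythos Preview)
The paper does not prove this proposition; it is quoted verbatim from \cite[Proposition 11]{MR2117416} and used as input. So there is no ``paper's own proof'' to compare against, only the original source and the deformation-theoretic viewpoint the paper later adopts in Section~5 (the versal deformation map $t_i=\alpha_i^2$ at exceptional nodes, together with the inessential automorphism group $\Aut_0$).

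Your count of Prym structures on $X$ via square roots of $\sh{O}_{X_{ne}}(-D)$ is the standard argument and matches the approach in \cite{MR2117416}. Your treatment of the multiplicity, however, is genuinely different: rather than reading it off from the local degree $2^{|\Delta_X^c|}$ of the map of versal deformation spaces modulo $|\Aut_0(X,\eta,\beta)|$, you compute it as a monodromy orbit size via Picard--Lefschetz. This is a perfectly good alternative --- arguably more transparent, since it bypasses the bookkeeping of inessential automorphisms --- and your final graph-theoretic identification $\dim_{\FF_2}\langle\delta_n:n\in\Delta_X^c\rangle=|\Delta_X^c|-(c-1)=b_1(\Gamma_Z)-b_1(\Delta_X)$ is correct.

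There is one point you use without stating it, and it is exactly what makes the monodromy computation clean: the vanishing cycles $\delta_n$ are pairwise disjoint on the nearby smooth fiber, hence $\langle\delta_m,\delta_n\rangle=0$ for all $m,n$. Without this, the pairing profile $(\langle v,\delta_n\rangle)_n$ would not be constant along the orbit, and your assertion that ``the local monodromy group is generated by the translations $v\mapsto v+\delta_n$ with $n\in\Delta_X^c$'' would fail --- applying $T_n$ for $n\in\Delta_X$ to $v+\delta_m$ could move you by $\delta_n$ as well. Once orthogonality is in hand, the profile is invariant, the orbit is exactly $v+\langle\delta_n:n\in\Delta_X^c\rangle$, and the rest of your argument goes through. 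You flag the third paragraph as the main obstacle; this orthogonality, together with the identification of the specialization (which is essentially Donagi's $\Delta_{I},\Delta_{II},\Delta_{III}$ trichotomy at each node), is precisely what needs to be made explicit there.
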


We will denote the moduli space of these curves by $\overline{\ZM{g}}$, and we note that it has two components.  One, isomorphic to $\overline{\M}_g$ where the Prym curve has $\eta\cong \sh{O}_X$ over a stable base, and $\overline{\R}_g$, the nontrivial Prym curves.

\begin{remark}[Notation]
The boundary components of the Prym moduli space have several competing notations in the literature.  For the $2^{2g}$ objects, we always have 1 that is the disconnected double cover, and in this setting, it lies over the stable curve itself.

Additionally, Donagi \cite{MR903385} classified the nontrivial points of order two on an irreducible 1-nodal curve in terms of the vanishing cycle $\delta$.  In his notation, $\Delta_I$ is the subset with the marked point $\mu$ being equal to $\delta$, $\Delta_{II}$ when $\langle \delta,\mu\rangle=0$ but $\mu\neq\delta$ and $\Delta_{III}$ being when $\langle \delta,\mu\rangle\neq 0$, under the Weil pairing, defined below.  Alternately, these three components are denoted by $\Delta_0''$, $\Delta_0'$ and $\Delta_0^{ram}$ by Farkas \cite{MR2639318} and it is noted that $\Delta_{III}=\Delta_0^{ram}$ is precisely the set of Prym curves on the quasistable curve.  In the rest of this article, however, we will follow Donagi's notation.

Over the 1-nodal reducible curves, the notation agrees, and the components are $\Delta_i$, $\Delta_{g-i}$ and $\Delta_{i:g-i}$, for the Prym curves supported on the component of genus $i$, $g-i$ or both, respectively.
\end{remark}

\section{The space \texorpdfstring{$\overline{\ZZM{g}}$}{ZMg}}

\begin{definition}[Weil Pairing]
Let $\mu,\nu\in\J(C)[2]$, and let $\kappa\in\Pic(C)$ such that $\kappa^{\otimes 2}\cong K_C$.  Then the Weil pairing on the curve $C$ is given by \[\langle\mu,\nu\rangle=h^0(\kappa)+h^0(\kappa\otimes\mu)+h^0(\kappa\otimes \nu)+h^0(\kappa\otimes\mu\otimes\nu)\mod 2.\]
\end{definition}

The Weil pairing is bilinear, skew-symmetric, and independent of the choice of $\kappa$, and therefore we can see that $\langle\mu,\nu\rangle=\langle\mu,\mu+\nu\rangle=\langle \nu,\mu+\nu\rangle$, and so it is an invariant of a rank 2 subgroup of $\J(C)[2]$.

\begin{lemma}
\label{lemma:Z22Mg}
The following data are equivalent:

\begin{enumerate}
 \item A curve $C\in\M_g$ along with $\tilde{C}_i\to C$, $i=1,2$ irreducible, nonisomorphic unramified double covers,
 \item A curve $C\in\M_g$ along with $\mu_1,\mu_2\in \J(C)[2]$, with $\mu_1\neq\mu_2$, and
 \item A pair of curves $C,D\in \M_{2g-1}$ along with involutions $\sigma_C,\sigma_D$ that act freely on $C,D$ respectively and which are such that $C/\sigma_C$ and $D/\sigma_D$ are isomorphic, but the pairs $(C,\sigma_C)$ and $(D,\sigma_D)$ are not.
\end{enumerate}
\end{lemma}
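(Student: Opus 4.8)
The plan is to deduce everything from Lemma~\ref{lemma:prymdefs}, which already treats the case of one double cover.

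First I would establish the equivalence of (1) and (2). Given $(C,\mu_1,\mu_2)$ with $\mu_1\ne\mu_2$ nonzero in $\J(C)[2]$, apply Lemma~\ref{lemma:prymdefs} separately to $\mu_1$ and to $\mu_2$ to obtain irreducible \'etale double covers $\tilde C_i\to C$; conversely, from two such covers read off the unique point of order two pulling back to the trivial bundle on each. Since $\mu\mapsto\tilde C_\mu$ is a bijection between nonzero $2$-torsion and isomorphism classes of irreducible \'etale double covers \emph{over} $C$, the condition $\mu_1\ne\mu_2$ matches exactly the condition that $\tilde C_1$ and $\tilde C_2$ are non-isomorphic as covers of $C$.

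Next I would pass to (3). Lemma~\ref{lemma:prymdefs} also equips $\tilde C_i$ with a fixed point free involution $\sigma_i$ and an isomorphism $\tilde C_i/\sigma_i\cong C$, and Riemann--Hurwitz forces $\tilde C_i$ to have genus $2g-1$; taking $(C,\sigma_C):=(\tilde C_1,\sigma_1)$ and $(D,\sigma_D):=(\tilde C_2,\sigma_2)$ (the notational clash with the base curve being harmless) produces the data of (3), with both quotients canonically the original base. In the other direction, from genus $2g-1$ curves $Y_1,Y_2$ with free involutions and an isomorphism $Y_1/\sigma_1\cong Y_2/\sigma_2=:B$, the quotient maps are irreducible \'etale double covers of the genus $g$ curve $B$, hence correspond to nonzero classes $\mu_1,\mu_2\in\J(B)[2]$; an isomorphism of the pairs $(Y_1,\sigma_1)\cong(Y_2,\sigma_2)$ respecting this identification descends to an isomorphism of covers over $B$, so ``the pairs are not isomorphic'' translates into $\mu_1\ne\mu_2$.

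The one place I expect to have to be careful, rather than anything geometric, is the reading of ``isomorphic'' in (3): one must decide whether the identification $C/\sigma_C\cong D/\sigma_D$ is part of the datum. It implicitly is, since (1) and (2) both fix a single base curve; with it remembered the dictionary above is exact, whereas an isomorphism of the pairs as \emph{abstract} curves-with-involution only descends to an automorphism $\varphi$ of the common quotient with $\varphi^*\mu_2=\mu_1$, which would force one instead to phrase all three equivalences on the level of the associated moduli stacks and keep track of the residual $\Aut(C)$-action. Everything else follows immediately by applying Lemma~\ref{lemma:prymdefs} twice.
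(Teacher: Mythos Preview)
Your proposal is correct and follows exactly the approach the paper takes: the paper's own proof is the single sentence ``This is just an application of Lemma~\ref{lemma:prymdefs},'' and what you have written is a careful unpacking of that sentence. Your remark about whether the identification $C/\sigma_C\cong D/\sigma_D$ is part of the datum in (3) is a genuine subtlety the paper leaves implicit, and your resolution (treat it as fixed, consistently with (1) and (2)) is the intended one.
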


\begin{proof}
This is just an application of Lemma \ref{lemma:prymdefs}.
\end{proof}

A related, but slightly different result is the following, where we do not choose a basis for the Klein four group.

\begin{lemma}
\label{lemma:K2^2Mg}
The following data are equivalent:

\begin{enumerate}
 \item A curve $C\in \M_g$, and $\tilde{C}\to C$ an \'etale Klein $4$ cover,
 \item A curve $C\in \M_g$, and $\phi:V_{4}\to \J(C)[2]$ an injective homomorphism, and 
 \item A curve $\tilde{C}\in \M_{4g-3}$ with a free action of $V_{4}$ on $\tilde{C}$.
\end{enumerate}
\end{lemma}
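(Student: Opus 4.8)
The plan is to reduce the statement to Lemma~\ref{lemma:prymdefs}, applied to the three index-two subgroups of $V_4$, together with the standard dictionary between connected Galois covers of $C$ and surjections out of $\pi_1(C)$. The only point requiring genuine thought is matching up irreducibility of the Klein cover with injectivity of $\phi$; everything else is bookkeeping.

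\emph{From $(2)$ to $(1)$.} Given an injective $\phi\colon V_4\to\J(C)[2]$, write its image as $\{0,\mu_1,\mu_2,\mu_3\}$ with $\mu_3\cong\mu_1\otimes\mu_2$; injectivity is equivalent to all three $\mu_i$ being nonzero. Fixing isomorphisms $\mu_i^{\otimes2}\cong\sh{O}_C$ and $\mu_i\otimes\mu_j\cong\mu_k$ (for $\{i,j,k\}=\{1,2,3\}$) makes $\sh{O}_C\oplus\mu_1\oplus\mu_2\oplus\mu_3$ a sheaf of $\sh{O}_C$-algebras, whose relative spectrum $\tilde C$ is a degree-four \'etale cover with a natural $V_4$-action; concretely it is $(\tilde C_{\mu_1}\times_C\tilde C_{\mu_2}\times_C\tilde C_{\mu_3})/(\iota_{\mu_1},\iota_{\mu_2},\iota_{\mu_3})$ in the notation following Lemma~\ref{lemma:prymdefs}. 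To see $\tilde C$ is irreducible: the $V_4$-cover is connected exactly when its monodromy homomorphism $\pi_1(C)\to V_4$ is surjective, and (writing that homomorphism as $(\mu_1,\mu_2)$) surjectivity amounts to $\mu_1,\mu_2$ being independent as homomorphisms $\pi_1(C)\to\ZZ/2$, i.e.\ to $\mu_1,\mu_2,\mu_3$ all being nonzero, which is the hypothesis; \'etale over a smooth curve, connected forces irreducible. Riemann--Hurwitz for the degree-four \'etale cover gives $2g(\tilde C)-2=4(2g-2)$, so $g(\tilde C)=4g-3$.

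\emph{From $(1)$ to $(2)$.} Decompose $\pi_*\sh{O}_{\tilde C}=\bigoplus_{\chi\in\widehat{V_4}}\sh{L}_\chi$ into eigen-line-bundles for the characters of $V_4$, with $\sh{L}_1\cong\sh{O}_C$; the algebra structure gives $\sh{L}_\chi\otimes\sh{L}_{\chi'}\cong\sh{L}_{\chi\chi'}$, so each nontrivial $\sh{L}_\chi$ is $2$-torsion. It is nonzero because the intermediate double cover $\underline{\Spec}(\sh{O}_C\oplus\sh{L}_\chi)$ is a quotient of the irreducible $\tilde C$, hence irreducible, hence has nontrivial associated point of order two by Lemma~\ref{lemma:prymdefs}; and the three nontrivial $\sh{L}_\chi$ are pairwise distinct, since $\sh{L}_\chi\cong\sh{L}_{\chi'}$ would force $\sh{L}_{\chi\chi'}\cong\sh{L}_\chi^{\otimes2}\cong\sh{O}_C$. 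Fixing an identification $V_4\cong\widehat{V_4}$, $v\mapsto\chi_v$, and sending $v$ to $\sh{L}_{\chi_v}$ gives the required injective homomorphism $V_4\to\J(C)[2]$.

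\emph{From $(1)$ to $(3)$ and back.} Given $(1)$, the deck group $\Aut(\tilde C/C)\cong V_4$ acts freely on $\tilde C$ because $\pi$ is \'etale, and $\tilde C\in\M_{4g-3}$ by the genus count above. Given $(3)$, set $C:=\tilde C/V_4$, which is smooth of genus $g$ (Riemann--Hurwitz again, the action being free), and the quotient map $\tilde C\to C$ is \'etale of degree $4$ with Galois group $V_4$, i.e.\ a Klein cover. I do not expect any real obstacle beyond the two substantive points already flagged --- that the summands of $\pi_*\sh{O}_{\tilde C}$ are honestly $2$-torsion and that irreducibility of the cover is exactly injectivity of $\phi$ --- both of which are settled by applying Lemma~\ref{lemma:prymdefs} to the three intermediate double covers.
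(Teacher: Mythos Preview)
Your argument is correct and follows exactly the approach the paper intends: the paper's own proof is the single sentence ``This is again just an application of Lemma~\ref{lemma:prymdefs},'' and you have simply unpacked that application in full, spelling out the eigensheaf decomposition of $\pi_*\sh{O}_{\tilde C}$, the monodromy criterion for irreducibility, and the Riemann--Hurwitz genus count. There is nothing to add or correct.
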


\begin{proof}
This is again just an application of Lemma \ref{lemma:prymdefs}.
\end{proof}

For the rest of this section, we will be working in the case with a basis, and later will return to the basis-free case.

\begin{definition}[$\ZZ_2^2$ curve]
A $\ZZ_2^2$ curve is $(X_1,X_2,\eta_1,\eta_2,\beta_1,\beta_2)$ where $(X_i,\eta_i,\beta_i)$ is a Prym curve for $i=1,2$ and $X_1$ and $X_2$ have isomorphic stabilizations.
\end{definition}

An isomorphism of $\ZZ_2^2$ curves is just a pair of isomorphisms of Prym curves that induce the same isomorphism on the stable model.  Equivalently, an automorphism can be seen by looking at the quasistable curve with exceptional nodes the union of the exceptional nodes of $X_1$ and $X_2$.  From this viewpoint, an isomorphism is an isomorphism of such quasi-stable curves which induces isomorphisms on the pullbacks of $\eta_i$ and $\eta_i'$.

Given this, we can see that the moduli space of $\ZZ_2^2$ curves is just $\overline{\ZM{g}}\times_{\overline{\M}_g}\overline{\ZM{g}}$.  It is easy to see that it has at least five components, depending on $\eta_1,\eta_2$.  If both are trivial, we have a copy of $\overline{\M}_g$.  If one is trivial but the other is nontrivial, then we get two copies of $\R_g$.  Additionally, when $\eta_1\cong \eta_2$, we get another copy of $\overline{\R}_g$ leaving the remnant:

\begin{definition}[$\overline{\ZZM{g}}$]
We denote by $\overline{\ZZM{g}}$ the closure in $\overline{\ZM{g}}\times_{\overline{\M}_g}\overline{\ZM{g}}$ of the locus of $\ZZ_2^2$ curves with $\eta_1\not\cong\eta_2$ both nontrivial.  The locus of smooth curves in $\overline{\ZZM{g}}$ will be denoted by $\ZZM{g}$.
\end{definition}

Geometrically, the most natural thing to study is the moduli space of Klein four subgroups, with no choice of basis.  This, by Lemma \ref{lemma:K2^2Mg} is then the moduli space of Klein four covers of curves.  However, it is much easier to construct the space with a choice of basis, which is $\overline{\ZZM{g}}$ (also note that the geometricity of the covers is less clear on the boundary).  There is a natural $\PSL_2(\FF_2)$ action on $\ZZM{g}$, permuting the ordered bases of the $\ZZ_2^2\subset \J(C)[2]$.  Below, we will extend it to the boundary and construct $\overline{\KM{g}}$, and we will deduce many of its properties from those of $\overline{\ZZM{g}}$.

Here, we note that $\Aut(C,\eta_1,\eta_2)$ is just $\Aut(X_1,\eta_1,\beta_1)\times_{\Aut(C)}\Aut(X_2,\eta_2,\beta_2)$ where $(X_i,\eta_i,\beta_i)$ are the two Prym structures. Thus, the results of \cite{MR2117416} apply without difficulty, so we have good behavior of the universal deformation, we have a subgroup of inessential automorphisms, etc.  We will recall those facts we need in the last section of the paper.

\begin{lemma}
\label{DegZZM}
The map $\ZZM{g}\to\M_g$ has degree $(2^{2g}-1)(2^{2g}-2)$, thus, so does $\overline{\ZZM{g}}\to\overline{\M_g}$.
\end{lemma}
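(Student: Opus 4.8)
The plan is to compute the degree by counting points in a general fibre. Observe first that the two assertions are equivalent: the map $\overline{\ZM{g}}\to\overline{\M_g}$ is finite (its fibres over stable curves are finite by the Proposition of \cite{MR2117416} recalled above, and $\overline{\ZM{g}}$ is proper), hence so are the fibre product $\overline{\ZM{g}}\times_{\overline{\M_g}}\overline{\ZM{g}}$ and its closed subscheme $\overline{\ZZM{g}}$; a finite dominant morphism onto the integral base $\overline{\M_g}$ has its degree computed on any dense open of the base. Since $\ZZM{g}$ is dense in $\overline{\ZZM{g}}$ and maps into $\M_g$, it therefore suffices to show that $\ZZM{g}\to\M_g$ has degree $(2^{2g}-1)(2^{2g}-2)$, and for this I would compute the cardinality of the fibre over a general $[C]\in\M_g$.

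Over the locus of smooth curves, $\overline{\ZM{g}}$ restricts to the disjoint union $\ZM{g}=\M_g\sqcup\R_g$, where the $\M_g$-summand parametrizes $(C,\sh{O}_C)$ and the $\R_g$-summand parametrizes $(C,\eta)$ with $\eta\in\J(C)[2]$ nonzero; by Lemma \ref{lemma:prymdefs} the fibre of $\R_g\to\M_g$ over an automorphism-free $[C]$ is $\J(C)[2]\setminus\{0\}$, a set of $2^{2g}-1$ elements. Hence $\ZM{g}\times_{\M_g}\ZM{g}$ decomposes as $\M_g\sqcup\R_g\sqcup\R_g\sqcup\bigl(\R_g\times_{\M_g}\R_g\bigr)$, the first three summands being exactly the loci where $\eta_1$ or $\eta_2$ is trivial. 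Within $\R_g\times_{\M_g}\R_g$, the condition $\eta_1\cong\eta_2$ cuts out the diagonal $\Delta(\R_g)$; since $\R_g\to\M_g$ is étale (in characteristic zero), $\Delta(\R_g)$ is both closed and open, so it is a union of connected components and its complement is again finite étale over $\M_g$. Because all of the conditions "$\eta_i$ trivial" and "$\eta_1\cong\eta_2$" are closed, this complement is already closed in the smooth locus of the fibre product, so taking the closure adds nothing there and it coincides with $\ZZM{g}$ over $\M_g$. Its fibre over an automorphism-free $[C]$ is $\bigl((\J(C)[2]\setminus\{0\})\times(\J(C)[2]\setminus\{0\})\bigr)$ minus the diagonal, of cardinality $(2^{2g}-1)^2-(2^{2g}-1)=(2^{2g}-1)(2^{2g}-2)$, which is the claimed degree.

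The only genuine point of care is the role of automorphisms. For $g\geq 3$ a general curve has no nontrivial automorphisms, so the fibre counts above are literal and the argument is complete; for $g=2$ every curve carries the hyperelliptic involution, but this acts as $-1$ on $\J(C)$ and hence as the identity on $\J(C)[2]$, so it fixes every Prym structure and the orbit count over a general $[C]\in\M_2$ is still $(2^{2g}-1)(2^{2g}-2)$. Thus the formula holds for all $g\geq 2$. I expect the bookkeeping of this connected-component decomposition — verifying that discarding the "degenerate" loci removes precisely a union of components, so that the fibre of $\ZZM{g}$ is the naive set-theoretic difference — to be the only step that requires attention rather than being purely formal; everything else reduces to the finiteness transfer and the elementary count.
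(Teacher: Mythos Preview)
Your proof is correct and follows essentially the same approach as the paper: decompose $\ZM{g}\times_{\M_g}\ZM{g}$ into the degenerate loci $\M_g$, three copies of $\R_g$, and the remainder $\ZZM{g}$, then subtract degrees. The paper writes the arithmetic as $2^{4g}-1-3(2^{2g}-1)$ rather than $(2^{2g}-1)^2-(2^{2g}-1)$, and omits your careful discussion of finiteness, \'etaleness, and automorphisms, but the argument is the same.
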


\begin{proof}
We know that the degree of $\ZM{g}\times_{\M_g}\ZM{g}\to\M_g$ is $2^{4g}$ and that it breaks down as $\M_g\cup\R_g\cup\R_g\cup\R_g\cup\ZZM{g}$.  Each component is dominant and equidimensional, so we can compute the degree on $\ZZM{g}$ as $2^{4g}-1-3(2^{2g}-1)=(2^{2g}-1)(2^{2g}-2)$.
\end{proof}

The space $\ZZM{g}$ is not irreducible.  We can see that there must be at least two components because the Weil pairing is deformation invariant; we will write $\ZZM{g}^0$ and $\ZZM{g}^1$ for the two components, with Weil pairing respectively 0 and 1.

\begin{lemma}
The spaces $\ZZM{g}^0$ and $\ZZM{g}^1$ are both irreducible.
\end{lemma}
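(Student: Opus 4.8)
The plan is to use that, over the locus of smooth curves, $\ZZM{g}\to\M_g$ is an \'etale covering of Deligne--Mumford stacks. Since $\M_g$ is smooth over $\CC$, so is $\ZZM{g}$, and hence each of $\ZZM{g}^0$ and $\ZZM{g}^1$ is irreducible as soon as it is connected. Connected components of an \'etale cover of the connected stack $\M_g$ are in bijection with the orbits of $\pi_1(\M_g)$, the mapping class group $\Gamma_g$, on the fibre. By Lemma~\ref{lemma:Z22Mg} the fibre over $[C]$ is the set of pairs $(\mu_1,\mu_2)$ with $\mu_1,\mu_2\in\J(C)[2]\setminus\{0\}$ and $\mu_1\neq\mu_2$, and $\ZZM{g}^\epsilon$ is the sublocus where $\langle\mu_1,\mu_2\rangle=\epsilon$. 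The $\Gamma_g$-action on $\J(C)[2]\cong H_1(C;\FF_2)$ preserves the Weil pairing --- it is the mod $2$ intersection form --- and factors through the classical surjection $\Gamma_g\twoheadrightarrow\Sp(\J(C)[2])=\Sp(2g,\FF_2)$. So it suffices to prove that $\Sp(2g,\FF_2)$ acts transitively on
\[P_\epsilon=\bigl\{(\mu_1,\mu_2)\ :\ \mu_i\neq 0,\ \mu_1\neq\mu_2,\ \langle\mu_1,\mu_2\rangle=\epsilon\bigr\},\qquad \epsilon\in\{0,1\}.\]

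For the transitivity I would invoke Witt's extension theorem for the nondegenerate symplectic $\FF_2$-space $(\J(C)[2],\langle\,,\,\rangle)$. Over $\FF_2$, two distinct nonzero vectors are automatically linearly independent, so for $(\mu_1,\mu_2)\in P_\epsilon$ the vectors $\mu_1,\mu_2$ span a $2$-plane $W$, and the Gram matrix of the form restricted to $W$ in the basis $(\mu_1,\mu_2)$ is $\bigl(\begin{smallmatrix}0&\epsilon\\\epsilon&0\end{smallmatrix}\bigr)$, depending only on $\epsilon$. Thus, given any two elements of $P_\epsilon$, the evident linear isomorphism between the two spanned $2$-planes is an isometry, and Witt's theorem extends it to an element of $\Sp(2g,\FF_2)$ carrying one pair to the other. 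Concretely this says that for $\epsilon=1$ the group is transitive on hyperbolic pairs, and for $\epsilon=0$ it is transitive on ordered bases of totally isotropic $2$-planes; the latter requires $g\geq 2$ for $P_0$ to be nonempty in the first place, which is part of our standing hypothesis.

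I do not expect a serious obstacle. The points that merit care are: the reduction of irreducibility to connectedness, which is valid because $\ZZM{g}$ is smooth over $\CC$; the identification of $\pi_1(\M_g)$ with $\Gamma_g$ and of its image in $\Sp(\J(C)[2])$ with the full symplectic group, both classical (and exactly the input used to prove $\R_g$ and $\S_g^\pm$ irreducible); and the exceptional case $g=1$, where $P_0=\emptyset$ and there is no component $\ZZM{1}^0$ --- excluded since Prym curves are defined only for $g\geq 2$. The substantive content is the one-line Gram matrix computation combined with Witt's theorem.
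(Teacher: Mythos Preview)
Your proof is correct and follows essentially the same approach as the paper: both reduce irreducibility to a monodromy statement, namely that the image of $\pi_1(\M_g)$ in $\Sp(2g,\FF_2)$ acts transitively on the pairs of each fixed Weil pairing. You have supplied considerably more detail than the paper---the smoothness argument reducing irreducibility to connectedness, the explicit surjection $\Gamma_g\twoheadrightarrow\Sp(2g,\FF_2)$, and Witt's extension theorem for the transitivity---whereas the paper simply asserts the orbit structure and identifies it with the monodromy.
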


\begin{proof}
For any curve $C$ of genus $g$, the action of $\Sp(2g,\FF_2)$ on the space $\J(C)[2]\times \J(C)[2]$ has two orbits, pairs of points that are orthogonal and pairs that are nonorthogonal, and this is the monodromy of $\ZZM{g}\to \M_g$.
\end{proof}

\begin{remark}
Although slightly more complex, in the case where we look at points of order $n$ rather than points of order $2$, we get a similar theorem, where the number of components is indexed by $\ZZ_n$.  Similarly, the Klein moduli space, which we will study below, will have components indexed by $\ZZ_n/\ZZ_n^\times$.  In our case, both of these are $\ZZ_2$, and we will identify them with $\{0,1\}$.
\end{remark}

Before moving on to a detailed analysis of the boundary, we compute the degrees of the maps $\ZZM{g}^i\to\M_g$.

\begin{proposition}
\label{DegZZM01}
We have natural maps to $\M_g$ forgetting the points of order two, and their degrees are:

\begin{enumerate}
 \item $\ZZM{g}^0\to \M_g$ has degree $(2^{2g}-1)(2^{2g-1}-2)$
 \item $\ZZM{g}^1\to \M_g$ has degree $(2^{2g}-1)2^{2g-1}$
\end{enumerate}
\end{proposition}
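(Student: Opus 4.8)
The plan is to count, for a fixed curve $C$ of genus $g$, the number of ordered pairs $(\mu_1,\mu_2)$ of distinct nonzero elements of $\J(C)[2]$ with $\langle\mu_1,\mu_2\rangle=0$ (resp.\ $=1$), using Lemma~\ref{lemma:Z22Mg} to identify the fiber of $\ZZM{g}^i\to\M_g$ with this set, and the fact (already proved) that $\ZZM{g}^i$ is irreducible so that the degree is just the cardinality of a generic fiber. Since $\J(C)[2]\cong\FF_2^{2g}$ with its nondegenerate symplectic form and the map is \'etale, the count is purely linear-algebraic and independent of $C$.

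First I would count, for a fixed nonzero $\mu_1$, the number of $\mu_2$ orthogonal to it. The orthogonal complement $\mu_1^\perp$ is a hyperplane of $\FF_2^{2g}$, hence has $2^{2g-1}$ elements; removing $0$ and $\mu_1$ itself (both of which lie in $\mu_1^\perp$) leaves $2^{2g-1}-2$ admissible $\mu_2$'s. Since there are $2^{2g}-1$ choices of nonzero $\mu_1$, the fiber of $\ZZM{g}^0$ has $(2^{2g}-1)(2^{2g-1}-2)$ elements, giving the degree in (1). For the nonorthogonal case, the elements not in $\mu_1^\perp$ number $2^{2g}-2^{2g-1}=2^{2g-1}$, and none of these equals $0$ or $\mu_1$ (both of which are orthogonal to $\mu_1$), so all $2^{2g-1}$ are admissible; multiplying by $2^{2g}-1$ gives $(2^{2g}-1)2^{2g-1}$, which is (2). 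As a consistency check, the two add up to $(2^{2g}-1)(2^{2g-1}-2)+(2^{2g}-1)2^{2g-1}=(2^{2g}-1)(2^{2g}-2)$, matching Lemma~\ref{DegZZM}.

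There is essentially no serious obstacle here; the only point requiring a word of care is the passage from ``cardinality of the set of marked structures over $C$'' to ``degree of the map of moduli spaces.'' This is justified because $\ZZM{g}^i\to\M_g$ is generically \'etale with fiber over a generic $C$ (one with trivial automorphism group, or more precisely one over which the covering is unramified in the stack sense) in bijection with the set counted above, and because the generic fiber cardinality of a dominant generically finite map between irreducible varieties equals its degree. The irreducibility of each $\ZZM{g}^i$, already established, is what lets us speak of a single degree rather than a sum over components. The independence of the count from $C$ follows from the fact that the symplectic form on $\J(C)[2]$ is the Weil pairing, which is nondegenerate and alternating, so $(\FF_2^{2g},\langle\,,\,\rangle)$ is the unique such space up to isometry.
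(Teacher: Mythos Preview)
Your proof is correct and follows essentially the same approach as the paper's own argument: fix a nonzero $\mu_1$, count the admissible $\mu_2$'s in (respectively, outside) the hyperplane $\mu_1^\perp$, and multiply by the $2^{2g}-1$ choices of $\mu_1$. Your version is in fact slightly more careful than the paper's, since you spell out why neither $0$ nor $\mu_1$ can lie in the non-orthogonal part, you include the consistency check against Lemma~\ref{DegZZM}, and you justify the passage from fiber cardinality to degree.
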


\begin{proof}
Fix a smooth curve $C$ of genus $g$.

Any element of $\ZZM{g}$ lying over $C$ is of the form $(C,\eta,\eta')$ where $\eta,\eta'\in\J(C)[2]$, which is an $\FF_2$ vector space. To be in $\ZZM{g}^0$ they must be orthogonal under the Weil pairing, which is a nondegenerate form.  Thus, for each $\eta$, we must choose $\eta'\in\eta\perp$, a hyperplane in $\J(C)[2]$.  However, as they are linearly independent, we asset that $\eta'\notin\{0,\eta\}$.  Thus, we have $2^{2g}-1$ choices for $\eta$, and given one of those, we have $2^{2g-1}-2$ choices of $\eta'$ satisfying these conditions, which computes the degree of $\ZZM{g}^0\to\M_g$.

To determine the degree of $\ZZM{g}^1\to \M_g$, we note that we can again choose $\eta$ freely, and now $\eta'\in \J(C)[2]\setminus \eta^\perp$, giving us the degree claimed.
\end{proof}

\section{Geometry of the boundary}

\begin{proposition}
Let $Z$ be a stable curve with dual graph $\Gamma_Z$.  The fiber over $Z$ in the $\overline{\ZZM{g}}$ consists of the following objects:

\begin{enumerate}
 \item $(2^{2g^\nu+b_1(\Gamma_Z)}-1)(2^{2g^\nu+b_1(\Gamma_Z)}-2)$ objects of multiplicity 1 where both Prym curves are supported on $Z$.
 \item If $X$ is quasistable with stabilization $Z$ and $\Delta_X^c$ is Eulerian, then we get $2^{4g^\nu+b_1(\Delta_X)+b_1(\Gamma_Z)}-2^{2g^\nu+b_1(\Delta_X)}$ objects of multiplicity $2^{b_1(\Gamma_Z)-b_1(\Delta_X)}$ supported on each of $(X,Z)$ and $(Z,X)$.
 \item If $X$ is quasistable with stabilization $Z$ and $\Delta_X^c$ is Eulerian, then we get two types of objects supported on $(X,X)$:
		\begin{enumerate}
		 \item $2^{4g^\nu+2b_1(\Delta_X)}-2^{2g^\nu+b_1(\Delta_X)}$ objects of multiplicity $2^{2b_1(\Gamma_Z)-2b_1(\Delta_X)}$ with two distinct Prym structures
		 \item $2^{2g^\nu+b_1(\Delta_X)}$ objects of multiplicity $2^{2b_1(\Gamma_Z)-2b_1(\Delta_X)}-2^{b_1(\Gamma_Z)-b_1(\Delta_X)}$ with the same Prym structure.
		\end{enumerate}
 \item If $X_1$, $X_2$ are quasistable over $Z$ and $\Delta_{X_1}^c$ and $\Delta_{X_2}^c$ are Eulerian, then there exist $2^{4g^\nu+b_1(\Delta_X)+b_1(\Delta_X)}$ objects of multiplicity $2^{2b_1(\Gamma_Z)-b_1(\Delta_{X_1})-b_1(\Delta_{X_2})}$ on $(X_1,X_2)$ and again on $(X_2,X_1)$.
\end{enumerate}
\end{proposition}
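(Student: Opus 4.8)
The plan is to find the fibre over $Z$ inside $\overline{\R}_g\times_{\overline{\M}_g}\overline{\R}_g$, applying Proposition 11 of \cite{MR2117416} to each of the two factors. First, the closure defining $\overline{\ZZM{g}}$ lies in $\overline{\R}_g\times_{\overline{\M}_g}\overline{\R}_g$: a $\ZZ_2^2$ curve with $\eta_1,\eta_2$ both nontrivial has both Prym structures in $\overline{\R}_g$, which is clopen in $\overline{\ZM{g}}$; and the only component of $\overline{\R}_g\times_{\overline{\M}_g}\overline{\R}_g$ not contained in $\overline{\ZZM{g}}$ is the diagonal $\Delta\cong\overline{\R}_g$. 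Since $\overline{\R}_g\to\overline{\M}_g$ is finite étale over $\M_g$, the fibre product there is a disjoint union of reduced étale sheets, so $\overline{\R}_g\times_{\overline{\M}_g}\overline{\R}_g$ is reduced at the generic point of $\Delta$ and of each component of $\overline{\ZZM{g}}$; equivalently $[\overline{\R}_g\times_{\overline{\M}_g}\overline{\R}_g]=[\Delta]+[\overline{\ZZM{g}}]$ as cycles. Hence a point $(P_1,P_2)$ of the fibre of $\overline{\R}_g\times_{\overline{\M}_g}\overline{\R}_g$ over $[Z]$ appears in $\overline{\ZZM{g}}$ over $Z$ with the same multiplicity when $P_1\neq P_2$, and with that multiplicity reduced by the multiplicity of $\Delta$ when $P_1=P_2$.

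To compute the fibre of $\overline{\R}_g\times_{\overline{\M}_g}\overline{\R}_g$ over $[Z]$, I would pass to the universal deformation space $D_Z$ of $Z$ (that is, work analytic-locally at $[Z]$, exactly as in \cite{MR2117416}). By Proposition 11 of \cite{MR2117416}, the preimage of $D_Z$ in $\overline{\R}_g$ is the disjoint union — over quasistable curves $X$ with stabilisation $Z$ and $\Delta_X^c$ eulerian, and over the $2^{2g^\nu+b_1(\Delta_X)}$ Prym structures supported on each such $X$ — of the deformation spaces $D_P$ of the resulting Prym curves $P$, each finite and flat over $D_Z$ of degree $2^{b_1(\Gamma_Z)-b_1(\Delta_X)}$ (here $\overline{\M}_g$ is regular and $\overline{\R}_g$ is Cohen--Macaulay, so the quoted multiplicity is the local degree). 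Taking fibre products, the preimage of $D_Z$ in $\overline{\R}_g\times_{\overline{\M}_g}\overline{\R}_g$ is $\bigsqcup_{(P_1,P_2)}D_{P_1}\times_{D_Z}D_{P_2}$, finite flat over $D_Z$ of degree the product of the two degrees, whose fibre over $[Z]$ is the tensor product over $\CC$ of the two Artinian fibres and so has length $2^{b_1(\Gamma_Z)-b_1(\Delta_{X_1})}\cdot 2^{b_1(\Gamma_Z)-b_1(\Delta_{X_2})}$. Thus $(P_1,P_2)$ occurs in $\overline{\R}_g\times_{\overline{\M}_g}\overline{\R}_g$ over $Z$ with multiplicity $2^{2b_1(\Gamma_Z)-b_1(\Delta_{X_1})-b_1(\Delta_{X_2})}$.

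Now excise the diagonal and sort into the four cases, rewriting exponents via $g=g^\nu+b_1(\Gamma_Z)$ and $b_1(\Delta_Z)=b_1(\Gamma_Z)$. If $X_1=X_2=Z$ both degrees are $2^{b_1(\Gamma_Z)-b_1(\Delta_Z)}=1$, and the pairs $(P_1,P_2)$ with $P_1\neq P_2$ are the ordered pairs of distinct nonzero elements of $\J(Z)[2]$, of which there are $(2^{2g^\nu+b_1(\Gamma_Z)}-1)(2^{2g^\nu+b_1(\Gamma_Z)}-2)$, each of multiplicity $1$ (case (1)). If exactly one of $X_1,X_2$ equals $Z$, say $X_2=Z$, then $P_1$ ranges over the $2^{2g^\nu+b_1(\Delta_{X_1})}$ Prym structures on $X_1$ and $P_2$ over the $2^{2g^\nu+b_1(\Gamma_Z)}-1$ nontrivial ones on $Z$ (we exclude $\sh{O}_Z$, which lies in $\overline{\M}_g$, not $\overline{\R}_g$); as $X_1\neq Z$ we always have $P_1\neq P_2$, yielding $2^{4g^\nu+b_1(\Delta_{X_1})+b_1(\Gamma_Z)}-2^{2g^\nu+b_1(\Delta_{X_1})}$ objects of multiplicity $2^{b_1(\Gamma_Z)-b_1(\Delta_{X_1})}$, and symmetrically for $(Z,X_2)$ (case (2)). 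If $X_1=X_2=X\neq Z$, the pairs with distinct Prym structures number $2^{4g^\nu+2b_1(\Delta_X)}-2^{2g^\nu+b_1(\Delta_X)}$ with multiplicity $2^{2b_1(\Gamma_Z)-2b_1(\Delta_X)}$ (case 3(a)); for the $2^{2g^\nu+b_1(\Delta_X)}$ diagonal pairs $(P,P)$, the diagonal $D_P\subset D_P\times_{D_Z}D_P$ is finite flat of degree $m:=2^{b_1(\Gamma_Z)-b_1(\Delta_X)}$ and occurs with multiplicity one, so a short additivity-of-lengths (Mayer--Vietoris) argument in $D_P\times_{D_Z}D_P$ — which is Cohen--Macaulay and generically étale over $D_Z$, hence reduced — gives multiplicity $m^2-m=2^{2b_1(\Gamma_Z)-2b_1(\Delta_X)}-2^{b_1(\Gamma_Z)-b_1(\Delta_X)}$ (case 3(b)), positive precisely because a nonempty eulerian $\Delta_X^c$ contains a cycle of $\Gamma_Z$, forcing $b_1(\Delta_X)<b_1(\Gamma_Z)$. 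Finally, if $X_1\neq X_2$ are both $\neq Z$, every pair has $P_1\neq P_2$, giving $2^{4g^\nu+b_1(\Delta_{X_1})+b_1(\Delta_{X_2})}$ objects of multiplicity $2^{2b_1(\Gamma_Z)-b_1(\Delta_{X_1})-b_1(\Delta_{X_2})}$ (case (4)). As a check, the weighted total over all cases is $(2^{2g}-1)^2-(2^{2g}-1)=(2^{2g}-1)(2^{2g}-2)$, the degree computed in Lemma \ref{DegZZM}.

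The main obstacle is the multiplicity bookkeeping along the diagonal: one must confirm both that $\overline{\ZZM{g}}$ genuinely meets the diagonal in the boundary, precisely over the quasistable $X\neq Z$, and that the fibre multiplicity there is exactly $m^2-m$ and not something smaller. This rests on $\overline{\R}_g$ being Cohen--Macaulay (so that $\overline{\R}_g\times_{\overline{\M}_g}\overline{\R}_g$, being Cohen--Macaulay and generically reduced, is reduced) together with the additivity-of-lengths argument; alternatively one may carry out the explicit description of the relevant deformation rings following \cite{MR2117416} and read the lengths off directly. Everything else — the product formula for fibre-product multiplicities and the four-way tally — is then formal from Proposition 11 of \cite{MR2117416}.
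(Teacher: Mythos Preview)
Your approach is essentially the same as the paper's: compute the fiber of $\overline{\R}_g\times_{\overline{\M}_g}\overline{\R}_g$ over $Z$ using Proposition~11 of \cite{MR2117416} on each factor, then excise the diagonal copy of $\overline{\R}_g$ and verify by a degree check. Your write-up is in fact more careful than the paper's, which asserts the diagonal multiplicity in case~3(b) by appeal to degeneration without spelling out the $m^2-m$ computation; your Cohen--Macaulay/flatness argument for the additivity of fiber lengths along the decomposition $D_P\times_{D_Z}D_P=\Delta\cup W$ makes this step precise.
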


\begin{proof}
The fiber is a subscheme of $R_Z\times R_Z$ which is the complement of $M_g\cup R_g\cup R_g\cup R_g$, components which split off completely over smooth base curves.  Away from the diagonal, we simply remove anything where one of the components is the trivial line bundle.  On the diagonal, the situation is somewhat more intricate.  We look at $\overline{\ZM{g}}\times_{\overline{\M_g}}\overline{\ZM{g}}\setminus(\overline{\M_g}\cup\overline{\ZM{g}}\cup\overline{\ZM{g}}\cup\overline{\ZM{g}})$ and then take the closure.  This leaves us with some points on the diagonal, which we can see because on any degeneration, there will be classes of Prym curves that will degenerate to the same thing, which is seen by noting the multiplicity greater than 1.

Additionally, a straightforward computation summing these over all of the Euler paths gives us total degree $(2^{2g^\nu+2b_1(\Gamma_Z)}-1)(2^{2g^\nu+2b_1(\Gamma_Z)}-2)$, which is the degree of the moduli space, showing that nothing has been missed.
\end{proof}

Now, applying the above to the general point on the boundary, we see that for a reducible 1-nodal curve, all Prym curves on it are supported on the stable curve itself, yielding $(2^{2g}-1)(2^{2g}-2)$ objects of multiplicity 1. The case of an irreducible curve is a bit more complex:

\begin{corollary}
\label{deltairrlemma}

Let $Z$ be a 1-nodal irreducible stable curve of genus $g$ and $\nu:Z^\nu\to Z$ its normalization.  There is only one unstable quasistable curve, $X=Z^\nu\cup_{x,y}\PP^1$, where $x,y$ are the preimages of the node in $Z^\nu$.  Then the fiber of $\overline{\ZZM{g}}\to \overline{\M}_g$ over $Z$ consists of the following objects:

\begin{enumerate}
 \item On $(Z,Z)$, we have $(2^{2g-1}-1)(2^{2g-1}-2)$ objects of multiplicity 1.
 \item On each $(Z,X)$ and $(X,Z)$, we have a total of $2^{4g-2}-2^{2g-1}$ objects of multiplicity 2
 \item On $(X,X)$ we have two types of objects:
\begin{enumerate}
 \item $2^{4g-4}-2^{2g-2}$ objects of multiplicity 4 with non-isomorphic projections to $\overline{\ZM{g}}$.
 \item $2^{2g-2}$ objects of multiplicity 2 with the same projections to $\overline{\ZM{g}}$.
\end{enumerate}
\end{enumerate}
\end{corollary}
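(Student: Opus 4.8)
\emph{Proof plan.} The plan is to obtain Corollary~\ref{deltairrlemma} by specializing the preceding Proposition to a $1$-nodal irreducible stable curve $Z$. First I would record the three combinatorial invariants that enter that Proposition. Since $Z$ is irreducible with one node, its normalization $Z^\nu$ is smooth of genus $g^\nu=g-1$, and the dual graph $\Gamma_Z$ is a single vertex carrying one loop, so $b_1(\Gamma_Z)=1$. Next I would check that $X=Z^\nu\cup_{x,y}\PP^1$ is the only quasistable curve with stabilization $Z$ that is not already stable: an exceptional component must carry at least two nodes, and $Z$ has a single node, so there is at most one exceptional $\PP^1$, necessarily glued along the two branches $x,y$; its dual graph $\Gamma_X$ is two vertices joined by two edges. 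For this $X$ the unique node of $X$ lies under the exceptional component, so $\Delta_X=\emptyset$, $b_1(\Delta_X)=0$, while $\Delta_X^c$ is the pair of nodes of $X$, forming a $2$-cycle in $\Gamma_X$, which is Eulerian; hence the hypotheses of the Proposition are satisfied.

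With $g^\nu=g-1$, $b_1(\Gamma_Z)=1$, $b_1(\Delta_X)=0$ in hand, I would then simply read off the four cases. Case (1) gives $(2^{2g^\nu+b_1(\Gamma_Z)}-1)(2^{2g^\nu+b_1(\Gamma_Z)}-2)=(2^{2g-1}-1)(2^{2g-1}-2)$ objects of multiplicity $1$ on $(Z,Z)$. Case (2) gives $2^{4g^\nu+b_1(\Delta_X)+b_1(\Gamma_Z)}-2^{2g^\nu+b_1(\Delta_X)}=2^{4g-3}-2^{2g-2}$ objects of multiplicity $2^{b_1(\Gamma_Z)-b_1(\Delta_X)}=2$ on each of $(Z,X)$ and $(X,Z)$, a combined total of $2^{4g-2}-2^{2g-1}$. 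Case (3)(a) gives $2^{4g^\nu+2b_1(\Delta_X)}-2^{2g^\nu+b_1(\Delta_X)}=2^{4g-4}-2^{2g-2}$ objects of multiplicity $2^{2b_1(\Gamma_Z)-2b_1(\Delta_X)}=4$ on $(X,X)$ with distinct Prym structures, and (3)(b) gives $2^{2g^\nu+b_1(\Delta_X)}=2^{2g-2}$ objects of multiplicity $2^{2b_1(\Gamma_Z)-2b_1(\Delta_X)}-2^{b_1(\Gamma_Z)-b_1(\Delta_X)}=2$ with equal Prym structures. Case (4) requires two distinct non-stable quasistable models and so contributes nothing here.

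Finally I would sanity-check the bookkeeping by summing the object counts weighted by multiplicity and confirming the total equals the degree $(2^{2g}-1)(2^{2g}-2)$ of $\overline{\ZZM{g}}\to\overline{\M}_g$ from Lemma~\ref{DegZZM}: a short computation collapses $1\cdot(2^{2g-1}-1)(2^{2g-1}-2)+2\cdot(2^{4g-2}-2^{2g-1})+4\cdot(2^{4g-4}-2^{2g-2})+2\cdot 2^{2g-2}$ to $2^{4g}-3\cdot 2^{2g}+2$, as required. Since the substance of the argument is already contained in the preceding Proposition, I do not anticipate a genuine obstacle; the only points that demand care are the claim that $X$ is the unique non-stable quasistable model over $Z$, the verification of the Eulerian hypothesis for it (which is what makes case (4) vacuous and lets cases (2) and (3) apply), and interpreting the phrase ``a total'' in item (2) as the sum over the two orderings $(Z,X)$ and $(X,Z)$.
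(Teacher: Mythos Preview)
Your approach is correct and is exactly what the paper intends: the Corollary is stated without proof because it is a direct specialization of the preceding Proposition with $g^\nu=g-1$, $b_1(\Gamma_Z)=1$, $b_1(\Delta_X)=0$, and your degree check against Lemma~\ref{DegZZM} confirms the bookkeeping. One small slip in your write-up: $\Delta_X$ and $\Delta_X^c$ are subsets of the nodes of the \emph{stable} curve $Z$, not of $X$; so $\Delta_X^c$ is the single node of $Z$, corresponding to the loop in $\Gamma_Z$ (which is Eulerian), rather than the two nodes of $X$ forming a $2$-cycle in $\Gamma_X$---but this does not affect either $b_1(\Delta_X)=0$ or the Eulerian conclusion, so the argument goes through unchanged.
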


Now, we must compute the list of boundary divisors.  We begin by looking at the boundary of $\overline{\ZZM{g}}\times_{\overline{\M_g}}\overline{\ZZM{g}}$.  Points on the boundary can be classified into products $\Delta_a\times \Delta_b$ where $a,b$ were in $\{I,II,III\}$ over an irreducible 1-nodal curve and $\{i,g-i,i:g-i\}$ over a reducible curve with components of genus $i$ and $g-i$.  We denote the restrictions of these loci to $\overline{\ZZM{g}}$ by $\Delta_{a,b}$, and note that although some are, many of these are not irreducible.  

The components with nonirreducible restrictions to $\overline{\ZZM{g}}^i$ are $\Delta_{II,II}$, $\Delta_{III,III}$, $\Delta_{i,i:g-i}$, $\Delta_{i:g-i,i}$, $\Delta_{g-i,i:g-i}$, $\Delta_{i:g-i,g-i}$, and $\Delta_{i:g-i,i:g-i}$.  Specifically, they break up as
\begin{eqnarray*}
\Delta_{II,II}				&=&	\Delta_{II,II}^{\pm}+\Delta_{II,II}'\\
\Delta_{III,III}			&=&	\Delta_{III,III}^{diag}+\Delta_{III,III}'\\
\Delta_{i,i:g-i}			&=&	\Delta_{i,i:g-i}^i+\Delta_{i,i:g-i}'\\
\Delta_{i:g-i,i}			&=&	\Delta_{i:g-i,i}^i+\Delta_{i:g-i,i}'\\
\Delta_{g-i,i:g-i}		&=&	\Delta_{g-i,i:g-i}^{g-i}+\Delta_{g-i,i:g-i}'\\
\Delta_{i:g-i,g-i}		&=&	\Delta_{i:g-i,g-i}^{g-i}+\Delta_{i:g-i,g-i}'\\
\Delta_{i:g-i,i:g-i}	&=&	\Delta_{i:g-i,i:g-i}^i+\Delta_{i:g-i,i:g-i}^{g-i}+\Delta_{i:g-i,i:g-i}'
\end{eqnarray*}
where (with equality meaning equal to the closure of)
\begin{eqnarray*}
\Delta_{II,II}^{\pm}			&=&	\{(\eta_1,\eta_2)|\nu^*\eta_1\cong\nu^*\eta_2\}\\
\Delta_{II,II}'						&=&	\{(\eta_1,\eta_2)|\nu^*\eta_1\not\cong\nu^*\eta_2\}\\
\Delta_{III,III}^{diag}		&=&	\{(\eta_1,\eta_2)|\eta_1\cong \eta_2\}\\
\Delta_{III,III}'					&=&	\{(\eta_1,\eta_2)|\eta_1\not\cong\eta_2\}\\
\Delta_{i,i:g-i}^i				&=&	\{(\eta_1,\eta_2)|\eta_1|_{C_1}\cong \eta_2|_{C_1}\}\\
\Delta_{i,i:g-i}'					&=&	\{(\eta_1,\eta_2)|\eta_1|_{C_1}\not\cong \eta_2|_{C_1}\}\\
\Delta_{i:g-i,i}^i				&=&	\{(\eta_1,\eta_2)|\eta_1|_{C_1}\cong \eta_2|_{C_1}\}\\
\Delta_{i:g-i,i}'					&=&	\{(\eta_1,\eta_2)|\eta_1|_{C_1}\not\cong \eta_2|_{C_1}\}\\
\Delta_{g-i,i:g-i}^{g-i}	&=&	\{(\eta_1,\eta_2)|\eta_1|_{C_2}\cong \eta_2|_{C_2}\}\\
\Delta_{g-i,i:g-i}'				&=&	\{(\eta_1,\eta_2)|\eta_1|_{C_2}\not\cong \eta_2|_{C_2}\}\\
\Delta_{i:g-i,g-i}^{g-i}	&=&	\{(\eta_1,\eta_2)|\eta_1|_{C_2}\cong \eta_2|_{C_2}\}\\
\Delta_{i:g-i,g-i}'				&=&	\{(\eta_1,\eta_2)|\eta_1|_{C_2}\not\cong \eta_2|_{C_2}\}\\
\Delta_{i:g-i,i:g-i}^i		&=&	\{(\eta_1,\eta_2)|\eta_1|_{C_1}\cong \eta_2|_{C_1}\}\\
\Delta_{i:g-i,i:g-i}^{g-i}&=&	\{(\eta_1,\eta_2)|\eta_1|_{C_2}\cong \eta_2|_{C_2}\}\\
\Delta_{i:g-i,i:g-i}'			&=&	\{(\eta_1,\eta_2)|\eta_1|_{C_i}\not\cong \eta_2|_{C_i}\mbox{ for }i=1,2\}
\end{eqnarray*}

Now that we have a list of all of the components, we compute which objects are in which, a straightforward computation:

\begin{proposition}
\label{prop:1nidegrees}
Over the locus of 1-nodal irreducible curves in $\overline{\M_g}$ of the boundary components of $\overline{\ZZM{g}}$ consist of

\begin{eqnarray*}
\Delta_{I,II}							&=&	2^{2g-1}-2\mbox{ objects of multiplicity }1\\
\Delta_{II,I}							&=&	2^{2g-1}-2\mbox{ objects of multiplicity }1\\
\Delta_{I,III}						&=&	2^{2g-2}\mbox{ objects of multiplicity }2\\
\Delta_{III,I}						&=&	2^{2g-2}\mbox{ objects of multiplicity }2\\
\Delta_{II,III}						&=&	2^{2g-2}(2^{2g-1}-2)\mbox{ objects of multiplicity }2\\
\Delta_{III,II}						&=&	2^{2g-2}(2^{2g-1}-2)\mbox{ objects of multiplicity }2\\
\Delta_{II,II}^{\pm}			&=&	2^{2g-1}-2\mbox{ objects of multiplicity }1\\
\Delta_{II,II}'						&=&	(2^{2g-1}-2)(2^{2g-1}-4)\mbox{ objects of multiplicity }1\\
\Delta_{III,III}^{diag}		&=&	2^{2g-2}\mbox{ objects of multiplicity }2\\
\Delta_{III,III}'					&=&	2^{4g-4}-2^{2g-2}\mbox{ objects of multiplicity }4
\end{eqnarray*}
\end{proposition}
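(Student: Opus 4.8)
The plan is to work over a fixed general 1-nodal irreducible stable curve $Z$ of genus $g$, using the classification of the fiber of $\overline{\ZZM{g}}\to\overline{\M}_g$ over such $Z$ already established in Corollary \ref{deltairrlemma}, and to sort the objects in that fiber into the boundary components $\Delta_{a,b}$ according to the Donagi types $a,b\in\{I,II,III\}$ of the two Prym structures. Recall that $Z^\nu$ has genus $g-1$, so $\J(Z^\nu)[2]$ has dimension $2g-2$, and the generalized Jacobian of $Z$ gives an extension contributing one more $\FF_2$, so that the $2^{2g-1}$ points of order two on $Z$ (including the exceptional/quasistable ones) are organized by the vanishing cycle $\delta$: type $I$ is $\mu=\delta$ (one element), type $II$ is $\mu\perp\delta$ with $\mu\neq\delta$ (these are the $2^{2g-2}-1$ nonzero elements of $\delta^\perp$ other than $\delta$ that are "honest" line bundles on $Z$, together with $\mu=0$, so $2^{2g-2}-1$ nonzero type-$II$ classes), and type $III$ is $\langle\delta,\mu\rangle\neq 0$ (the $2^{2g-2}$ elements outside $\delta^\perp$), which are exactly the ones supported on the quasistable curve $X=Z^\nu\cup_{x,y}\PP^1$.

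First I would treat the off-diagonal products $\Delta_{a,b}$ with $a\neq b$ or with $a=b\in\{I\}$: here the two Prym structures are essentially independent, so the count is the product of the number of type-$a$ and type-$b$ classes, with the multiplicity being the product of the individual multiplicities (1 for types $I,II$, 2 for type $III$, since a type-$III$ Prym curve on $Z$ comes from $2^{b_1(\Gamma_Z)-b_1(\Delta_X)}=2^{1-0}=2$ classes in the fiber of $\overline{\R}_g\to\overline{\M}_g$ by Proposition \ref{lemma:prymdefs}'s successor, i.e. Proposition 11 of \cite{MR2117416}). Thus $\Delta_{I,II}$ has $1\cdot(2^{2g-1}-2)$ — wait: we must exclude the case $\eta_1\cong\eta_2$ and both trivial, but on the boundary the relevant exclusion built into the definition of $\overline{\ZZM{g}}$ (taking the closure of the locus with $\eta_1\not\cong\eta_2$ nontrivial) removes $\eta_i=\mathcal{O}$ and $\eta_1\cong\eta_2$; for $\Delta_{I,II}$ this forces $\eta_2\neq 0$ and $\eta_2\neq\delta$, giving $2^{2g-2}-1-1=2^{2g-1}-2$ after also recounting that type $II$ on the boundary of $\overline{\ZZM{g}}$ includes both signs of the $\pm$-splitting in $\overline{\R}_g$ — I would verify this count directly against the totals in Corollary \ref{deltairrlemma}. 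The diagonal-type products $\Delta_{II,II}^\pm$, $\Delta_{II,II}'$, $\Delta_{III,III}^{diag}$, $\Delta_{III,III}'$ require the extra bookkeeping: on $(X,X)$ the two Prym structures $\eta_1,\eta_2$ pull back to $Z^\nu$, and $\Delta_{III,III}^{diag}$ (resp.\ $\Delta_{II,II}^\pm$) is the locus where $\eta_1\cong\eta_2$ (resp.\ $\nu^*\eta_1\cong\nu^*\eta_2$); these account for $2^{2g-2}$ and $2^{2g-1}-2$ objects respectively, and the complementary $'$-loci get the remaining objects from parts 3(a) and 3(b) of Corollary \ref{deltairrlemma} — specifically $\Delta_{III,III}'$ inherits the $2^{4g-4}-2^{2g-2}$ multiplicity-4 objects with distinct projections, while $\Delta_{III,III}^{diag}$ inherits the $2^{2g-2}$ multiplicity-2 objects of part 3(b).

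I expect the main obstacle to be the diagonal accounting: distinguishing, among the objects of Corollary \ref{deltairrlemma}(3), which land in the $diag$/$\pm$ component versus the $'$ component, and correctly tracking how the multiplicities (4 vs.\ 2, from $2^{2b_1(\Gamma_Z)-2b_1(\Delta_X)}$ vs.\ $2^{2b_1(\Gamma_Z)-2b_1(\Delta_X)}-2^{b_1(\Gamma_Z)-b_1(\Delta_X)}$ with $b_1(\Gamma_Z)=1$, $b_1(\Delta_X)=0$) interact with the $\overline{\R}_g$-multiplicity-2 phenomenon for type $III$. The cleanest consistency check, which I would carry out at the end, is to sum $\sum_{a,b}(\text{count})\cdot(\text{mult})$ over all ten components listed and confirm it equals $(2^{2g-1}-1)(2^{2g-1}-2)+2(2^{4g-2}-2^{2g-1})+4(2^{4g-4}-2^{2g-2})+2\cdot 2^{2g-2}$, the total degree from Corollary \ref{deltairrlemma}; since that corollary already matches the degree in Lemma \ref{DegZZM}, a matching total here certifies that no object has been double-counted or omitted. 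The type-analysis for types $I$, $II$, $III$ on a 1-nodal irreducible curve is exactly Donagi's from \cite{MR903385}, recalled in the Notation remark, so no new geometric input is needed — only careful combinatorics.
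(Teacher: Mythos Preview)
Your approach is exactly what the paper has in mind: the proposition is stated there without proof, preceded only by the sentence ``we compute which objects are in which, a straightforward computation,'' so sorting the objects of Corollary~\ref{deltairrlemma} by Donagi type and then splitting $\Delta_{II,II}$ and $\Delta_{III,III}$ according to the definitions of the sub-components is the intended argument, and your proposed consistency check against the total degree is the right sanity check.

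That said, your first paragraph contains a genuine counting error that you should fix before it propagates. You write that the $2^{2g-1}$ points of order two on $Z$ ``include the exceptional/quasistable ones'' and that type~$II$ consists of $2^{2g-2}-1$ classes. Neither is right: $\J(Z)[2]$ has $2^{2g-1}$ elements, all of which are honest line bundles on the stable curve $Z$, and among the nonzero ones there is one of type~$I$ (the nontrivial element in $\ker\nu^*$) and $2^{2g-1}-2$ of type~$II$. The type~$III$ Prym structures live on the quasistable curve $X$, are indexed by $\J(Z^\nu)[2]$, and number $2^{2g-2}$ with multiplicity~$2$ each; they are \emph{not} among the $2^{2g-1}$ points of $\J(Z)[2]$. (Equivalently, on a nearby smooth curve $C$ the $2^{2g}$ elements of $\J(C)[2]$ split as $1+1+(2^{2g-1}-2)+2^{2g-1}$ into trivial, type~$I$, type~$II$, type~$III$, and the last $2^{2g-1}$ collapse in pairs to $2^{2g-2}$ limits.) You do recover the correct number $2^{2g-1}-2$ in the second paragraph, but via an ad hoc correction; cleaning up the setup will make the $\Delta_{II,II}^\pm$ and $\Delta_{II,II}'$ counts transparent: for each of the $2^{2g-1}-2$ choices of $\eta_1$ there is exactly one $\eta_2\neq\eta_1$ with $\nu^*\eta_2\cong\nu^*\eta_1$, namely $\eta_2=\eta_1\otimes\mathcal{O}_Z^-$, giving $|\Delta_{II,II}^\pm|=2^{2g-1}-2$ and hence $|\Delta_{II,II}'|=(2^{2g-1}-2)(2^{2g-1}-3)-(2^{2g-1}-2)=(2^{2g-1}-2)(2^{2g-1}-4)$. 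The identification of Corollary~\ref{deltairrlemma}(3)(a) with $\Delta_{III,III}'$ and (3)(b) with $\Delta_{III,III}^{diag}$ is correct as you state it.
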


\begin{remark}
\label{Intersection}
It is interesting to note that away from $\Delta_{III,III}'$, the Weil pairing is well-defined by continuity.  However, on this divisor, this fails.  Let us look at a simple example.  Let $C$ be a smooth genus 2 curve with $p_1,\ldots,p_6$ the fixed points of the hyperelliptic involution.  Then any point of order two is $p_i-p_j$ for $i\neq j$, and $p_i-p_j\equiv p_j-p_i$.  If the vanishing cycle of the degeneration is $p_1-p_2$, then the pairs $(p_1-p_3,p_1-p_4)$ and $(p_1-p_3,p_2-p_4)$ both degenerate to the same point of $\Delta_{III,III}'$.  We note that, for a genus 2 curve, the Weil pairing can be described as the cardinality of the intersection of the set of indices appearing in this representation.  Thus, $\langle p_1-p_3,p_1-p_4\rangle=1$ and $\langle p_1-p_3,p_2-p_4\rangle=0$.  Thus, the two components of this moduli space intersect on the boundary!
\end{remark}

In fact, we can say a bit more about the intersection:

\begin{theorem}
\label{thm:intersection}
The intersection $\Delta_{III,III}'=\overline{\ZZM{g}}^0\cap\overline{\ZZM{g}}^1$ is transverse, in the sense that if $(C,\eta_1,\eta_2)\in\Delta_{III,III}'$, then we have \[T_{(C,\eta_1,\eta_2)}\overline{\ZZM{g}}\cong T_{(C,\eta_1,\eta_2)}\overline{\ZZM{g}}^0\oplus_{T_{(C,\eta_1,\eta_2)}\Delta_{III,III}'}T_{(C,\eta_1,\eta_2)}\overline{\ZZM{g}}^1.\]
\end{theorem}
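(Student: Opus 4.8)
The plan is to reduce the claim to an explicit normal form for the universal deformation near a point of $\Delta_{III,III}'$, using the deformation theory of Prym curves recalled from \cite{MR2117416}; once that is in hand the assertion is linear algebra. Fix a point $p=(C,C,\eta_1,\eta_2,\beta_1,\beta_2)$ of $\Delta_{III,III}'$ lying over a $1$-nodal irreducible stable curve $Z$ with normalization $Z^\nu$; thus $C=Z^\nu\cup_{x,y}\PP^1$ is the quasistable model of Corollary \ref{deltairrlemma}, each $(C,\eta_i,\beta_i)$ is a Prym curve supported on the quasistable curve, and $\eta_1|_{Z^\nu}\not\cong\eta_2|_{Z^\nu}$. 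Let $t,s_1,\dots,s_{3g-4}$ be coordinates on the universal deformation of $Z$, with $t$ the node-smoothing parameter and the $s_j$ the deformations that preserve the node.

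The first step is to record the local shape of $\overline{\ZM{g}}\to\overline{\M}_g$ near $\Delta_{III}=\Delta_0^{ram}$: it is simply ramified there, so on the universal deformation of a Prym curve $(C,\eta_i,\beta_i)$ supported on a quasistable curve there is a coordinate system $\tau_i,s_1,\dots,s_{3g-4}$ in which $\Delta_{III}=\{\tau_i=0\}$ and the map to the deformation of $Z$ is $t=\tau_i^2$, $s_j\mapsto s_j$; here one uses that $\Delta_{III}\to\Delta_0$ is étale, so the boundary coordinates may be taken in common. This is the infinitesimal form of the fact that $\Delta_0^{ram}$ appears with multiplicity $2$ in the pullback of $\Delta_0$ to $\overline{\R_g}$, and the description of the universal deformation that produces it is part of the content of \cite{MR2117416} (the Prym analogue of Cornalba's boundary analysis for spin curves).

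The second step identifies $\overline{\ZZM{g}}$ near $p$. Since the deformation theory of a $\ZZ_2^2$ curve is the fibred product over that of its stable model, the universal deformation of $p$ is the fibre product of the two Prym deformation spaces over the deformation of $Z$, i.e.\ the subscheme $\{\tau_1^2=\tau_2^2\}$ of $\Spec\CC[\tau_1,\tau_2,s_1,\dots,s_{3g-4}]$; this is a union of two smooth branches $\{\tau_1=\tau_2\}$ and $\{\tau_1=-\tau_2\}$, each of dimension $3g-3$, meeting along $\{\tau_1=\tau_2=0\}$. Because $\eta_1|_{Z^\nu}\not\cong\eta_2|_{Z^\nu}$ and both $(C,\eta_i,\beta_i)$ are supported on the quasistable curve, none of the components of $\overline{\ZM{g}}\times_{\overline{\M}_g}\overline{\ZM{g}}$ other than $\overline{\ZZM{g}}$ passes through $p$ (the copy of $\overline{\M}_g$ and the three copies of $\overline{\R_g}$ each require some $\eta_i$ to be trivial or $\eta_1\cong\eta_2$), so locally this fibre product is $\overline{\ZZM{g}}$ and $\{\tau_1=\tau_2=0\}$ is $\Delta_{III,III}'$. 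Since the tangent functor commutes with fibre products of schemes, $T_p\overline{\ZZM{g}}$ is the fibre product over $T_{[Z]}\overline{\M}_g$ of the tangent spaces of the two Prym deformation spaces; in the coordinates above it is spanned by $\partial_{\tau_1},\partial_{\tau_2},\partial_{s_1},\dots,\partial_{s_{3g-4}}$, of dimension $3g-2$, because $dt=2\tau_i\,d\tau_i$ vanishes at $\tau_i=0$, so the projection to $T_{[Z]}\overline{\M}_g$ kills the $\tau$-directions.

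The third step matches the two local branches with the two global components. By Remark \ref{Intersection} (equivalently, by the Picard--Lefschetz computation of the Weil pairing along a one-parameter smoothing on each branch, as illustrated there in genus $2$), both $\overline{\ZZM{g}}^0$ and $\overline{\ZZM{g}}^1$ pass through $p$; as these are the only two irreducible components of $\overline{\ZZM{g}}$ and there are exactly two local branches, each component coincides near $p$ with one of $\{\tau_1=\pm\tau_2\}$, and in particular is smooth at $p$. Their tangent spaces are then the two distinct hyperplanes $\{d\tau_1\mp d\tau_2=0\}$ inside the $(3g-2)$-dimensional space above; their sum is all of $T_p\overline{\ZZM{g}}$, and their intersection is $\{d\tau_1=d\tau_2=0\}=T_p\Delta_{III,III}'$, which is exactly the amalgamated direct sum asserted. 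The one real obstacle is the first step: establishing the normal form $t=\tau^2$ and, in particular, checking that the boundary deformations of $\eta_1$ and $\eta_2$ can be coordinatized compatibly, so that the fibre product is genuinely $\{\tau_1^2=\tau_2^2\}\times\CC^{3g-4}$ with no further hidden relation. This needs the full deformation theory of \cite{MR2117416} together with a little care with the inessential automorphisms; everything downstream of it is linear algebra.
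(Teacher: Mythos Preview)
Your argument is correct, and in fact it is considerably more precise than the paper's own proof. The paper argues transversality by characterizing the four tangent spaces in terms of whether the Weil pairing on a first-order deformation is ``undefined, $0$, or $1$,'' and then asserts that the pushout formula is immediate from these descriptions; no local model is written down. Your route is the explicit one: you invoke the normal form $t=\tau_i^2$ for $\overline{\ZM{g}}\to\overline{\M}_g$ along $\Delta_{III}$ (which the paper itself uses later, in Section~5, in the guise $t_i=\alpha_i^2$), take the fibre product to obtain $\{\tau_1^2=\tau_2^2\}\times\CC^{3g-4}$, and read off that the two local branches $\{\tau_1=\pm\tau_2\}$ are smooth hyperplanes meeting along $\{\tau_1=\tau_2=0\}$. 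The matching of branches with global components you do by the same Weil-pairing-under-degeneration computation the paper uses in the first paragraph of its proof (your appeal to Remark~\ref{Intersection} and Picard--Lefschetz is exactly that argument).

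What your approach buys is an honest verification that each $\overline{\ZZM{g}}^i$ is smooth at $p$ and that the tangent spaces really are the claimed hyperplanes; the paper's description of the tangent spaces via ``Weil pairing undefined/$0$/$1$'' is suggestive but leaves the reader to supply the deformation theory you have made explicit. Conversely, the paper's formulation makes the conceptual reason for transversality transparent without coordinates. The one point you flag yourself---that the boundary deformations of $\eta_1$ and $\eta_2$ can be coordinatized compatibly so that the fibre product is genuinely $\{\tau_1^2=\tau_2^2\}$ in a common chart---is handled by the discussion of the versal deformation of a $\ZZ_2^2$ curve in Section~5 (the decomposition of $\CC_\alpha^{3g-3}$ there), so you may cite that rather than leaving it as a caveat.
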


\begin{proof}
To see that this is precisely the intersection, we look at a degeneration of Prym curves with vanishing cycle $\delta$.  The fiber over the nodal curve only has points coming together over $\Delta_{III}$, which is the part of the fiber over the quasi-stable curve.  There, every Prym curve structure is the limit of both $\eta$ and $\eta+\delta$ for $\eta$ some Prym curve structure on a smooth curve in the degeneration.  The Weil pairing is well-defined on all components over $\Delta_{III}$ other than $\Delta_{III,III}'$, by linearity and the definitions of $\Delta_I$ and $\Delta_{II}$.  However, on $\Delta_{III,III}$, we have $(\eta_1,\eta_2)$ two Prym structures giving a $\ZZ_2^2$ curve.  This limit as $\delta$ vanishes, is the same as the limit of $(\eta_1,\eta_2+\delta)$, and also two other loci.  However, the Weil pairing is linear, so $\langle\eta_1,\eta_2+\delta\rangle=\langle\eta_1,\eta_2\rangle+\langle\eta_1,\delta\rangle$, and because it is in $\Delta_{III}$, $\langle \eta_1,\delta\rangle=1$, so this point is a limit of families of Veil pairing both 0 and 1, and this holds for every point in $\Delta_{III,III}'$.

Transversality follows by looking at first order deformations of $(C,\eta_1,\eta_2)$.  The Weil pairing determines which component $(C,\eta_1,\eta_2)$ is on, except along $\Delta_{III,III}'$ where it is indeterminate.  So we describe all of the space:

\begin{eqnarray*}
T_{(C,\eta_1,\eta_2)}\overline{\ZZM{g}}			&=&	\mbox{First order deformations with }\langle\eta_1,\eta_2\rangle\mbox{ undefined, } 0, \mbox{ or }1,	\\
 T_{(C,\eta_1,\eta_2)}\overline{\ZZM{g}}^0	&=&	\mbox{First order deformations with }\langle\eta_1,\eta_2\rangle\mbox{ undefined, or } 0,	\\
{T_{(C,\eta_1,\eta_2)}\Delta_{III,III}'}		&=&	\mbox{First order deformations with }\langle\eta_1,\eta_2\rangle\mbox{ undefined,}	\\
T_{(C,\eta_1,\eta_2)}\overline{\ZZM{g}}^1		&=&	\mbox{First order deformations with }\langle\eta_1,\eta_2\rangle\mbox{ undefined, or } ,
\end{eqnarray*}

From these descriptions, transversality follows immediately.
\end{proof}

A similar computation to Proposition \ref{prop:1nidegrees} over reducible 1-nodal curves gives:

\begin{proposition}
\label{prop:1nrdegrees}
Over the locus of 1-nodal reducible curves that are a union of a genus $i$ and a genus $g-i$ curve in $\overline{\M_g}$, the boundary components of $\overline{\ZZM{g}}$ are of the following degrees (and all objects are multiplicity 1):
\begin{eqnarray*}
\Delta_{i,i}							&=& (2^{2i}-1)(2^{2i}-2)\\
\Delta_{g-i,g-i}					&=& (2^{2(g-i)}-1)(2^{2(g-i)}-2)\\
\Delta_{i,g-i}						&=& (2^{2i}-1)(2^{2(g-i)}-1)\\
\Delta_{g-i,i}						&=& (2^{2i}-1)(2^{2(g-i)}-1)\\
\Delta_{i,i:g-i}^i				&=&	(2^{2i}-1)(2^{2(g-i)}-1)\\
\Delta_{i,i:g-i}'					&=&	(2^{2i}-2)(2^{2i}-1)(2^{2(g-i)}-1)\\
\Delta_{i:g-i,i}^i				&=&	(2^{2i}-1)(2^{2(g-i)}-1)\\
\Delta_{i:g-i,i}'					&=&	(2^{2i}-2)(2^{2i}-1)(2^{2(g-i)}-1)\\
\Delta_{g-i,i:g-i}^{g-i}	&=&	(2^{2i}-1)(2^{2(g-i)}-1)\\
\Delta_{g-i,i:g-i}'				&=&	(2^{2(g-i)}-2)(2^{2i}-1)(2^{2(g-i)}-1)\\
\Delta_{i:g-i,g-i}^{g-i}	&=&	(2^{2i}-1)(2^{2(g-i)}-1)\\
\Delta_{i:g-i,g-i}'				&=&	(2^{2(g-i)}-2)(2^{2i}-1)(2^{2(g-i)}-1)\\
\Delta_{i:g-i,i:g-i}^i		&=&	(2^{2i}-1)(2^{2(g-i)}-1)(2^{2(g-i)}-2)\\
\Delta_{i:g-i,i:g-i}^{g-i}&=&	(2^{2i}-1)(2^{2i}-2)(2^{2(g-i)}-1)\\
\Delta_{i:g-i,i:g-i}'			&=&	(2^{2i}-1)(2^{2(g-i)}-1)\\&&\times((2^{2i}-1)(2^{2(g-i)}-1)-(2^{2i}-1)-(2^{2(g-i)}-1))
\end{eqnarray*}
\end{proposition}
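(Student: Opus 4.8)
The plan is to follow the strategy of Proposition~\ref{prop:1nidegrees} and Corollary~\ref{deltairrlemma}, reducing everything to elementary counting inside the two‑torsion of the Jacobian of the reducible curve, now keeping track of which boundary stratum each point lies in. The new feature compared with the irreducible case is that over a reducible $1$‑nodal curve there are no quasistable contributions at all. Indeed, write $Z=C_1\cup_p C_2$ with $g(C_1)=i$ and $g(C_2)=g-i$; the only quasistable curve with stabilization $Z$ besides $Z$ itself is obtained by inserting a $\PP^1$ at the unique node $p$, but then the set of exceptional nodes is a single edge of $\Gamma_Z$ joining its two vertices, which has odd valence at each vertex, so $\Delta_X^c$ is not Eulerian and \cite{MR2117416} produces no Prym curve on it. Hence every Prym curve over $Z$ is supported on $Z$ with multiplicity $2^{b_1(\Gamma_Z)}=1$, the fiber of $\overline{\ZM{g}}\to\overline{\M}_g$ over $Z$ is $\J(Z)[2]\setminus\{0\}$, and the fiber of $\overline{\ZZM{g}}$ over $Z$ is the set of ordered pairs $(\eta_1,\eta_2)$ of distinct nonzero elements of $\J(Z)[2]$, all of multiplicity $1$ — in particular $(2^{2g}-1)(2^{2g}-2)$ objects in total, explaining the ``multiplicity $1$'' clause.

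Next I set up the combinatorial dictionary. Since $b_1(\Gamma_Z)=0$, restriction is an isomorphism $\J(Z)[2]\xrightarrow{\ \sim\ }\J(C_1)[2]\oplus\J(C_2)[2]$; write $\eta_j=(\eta_j^{(1)},\eta_j^{(2)})$. Assign to $\eta_j$ the \emph{type} $i$, $g-i$, or $i\!:\!g-i$ according as $\eta_j^{(2)}=0$, $\eta_j^{(1)}=0$, or both restrictions are nonzero; this is exactly the rule sending a Prym curve over $Z$ to one of the three boundary divisors of $\overline{\ZM{g}}$, so $(\eta_1,\eta_2)$ lies in $\Delta_{a,b}$ with $a,b$ the types of $\eta_1,\eta_2$, and when $a,b$ both involve a component $C_k$ on which both $\eta_j$ are nontrivial, the refined strata $\Delta^{\bullet}_{a,b}$ listed above separate $\eta_1^{(k)}\cong\eta_2^{(k)}$ from $\eta_1^{(k)}\not\cong\eta_2^{(k)}$. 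Setting $a=2^{2i}-1$ and $c=2^{2(g-i)}-1$, each listed degree is then a one‑line count: a type‑$(i,i)$ pair is a pair of distinct nonzero elements of $\J(C_1)[2]$, giving $a(a-1)$; a type‑$(i,g-i)$ pair is an arbitrary choice of nonzero $\eta_1^{(1)}$ and nonzero $\eta_2^{(2)}$, giving $ac$; a type‑$(i,i\!:\!g-i)$ pair splits as $\eta_1^{(1)}=\eta_2^{(1)}$, contributing $ac$ to $\Delta^i_{i,i:g-i}$, and $\eta_1^{(1)}\neq\eta_2^{(1)}$, contributing $a(a-1)c$ to $\Delta'_{i,i:g-i}$; and so on for the remaining strata, the one point needing care being the three pieces of $\Delta_{i:g-i,i:g-i}$, where imposing equality of the restrictions to the superscript component already forces $\eta_1\neq\eta_2$ through the other component, so that other component must carry distinct values. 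Finally one checks that the listed degrees sum to $(2^{2g}-1)(2^{2g}-2)$, the degree of $\overline{\ZZM{g}}\to\overline{\M}_g$ from Lemma~\ref{DegZZM}, which confirms that no stratum has been omitted.

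The main obstacle here is purely bookkeeping rather than geometric: over reducible $1$‑nodal curves the covering $\overline{\ZZM{g}}\to\overline{\M}_g$ is unramified (all fiber multiplicities are $1$), so nothing subtle happens with limits or with the Weil pairing, and the real work is just not conflating $\Delta_{a,b}$ with $\Delta_{b,a}$, keeping the six ``mixed'' families that carry a refinement straight, and correctly imposing $\eta_1\neq\eta_2$ in the refined strata of $\Delta_{i:g-i,i:g-i}$. The agreement of the total with Lemma~\ref{DegZZM} serves as the safeguard against an arithmetic slip.
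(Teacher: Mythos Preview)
Your argument is correct and is precisely the ``similar computation'' the paper leaves to the reader: over a reducible $1$-nodal curve the Eulerian condition rules out any non-stable quasistable model, so all objects have multiplicity $1$ and the counts reduce to partitioning ordered pairs of distinct nonzero elements of $\J(C_1)[2]\oplus\J(C_2)[2]$ by support type, exactly as you describe. Your final sum-check in fact catches a slip in the stated formula for $\Delta_{i:g-i,i:g-i}'$: your method yields $(2^{2i}-1)(2^{2i}-2)(2^{2(g-i)}-1)(2^{2(g-i)}-2)$, and only with this value (which exceeds the printed one by $(2^{2i}-1)(2^{2(g-i)}-1)$) do the fifteen entries add up to $(2^{2g}-1)(2^{2g}-2)$.
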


\section{The moduli of Klein curves}

In this section, we extend the results of the previous section to a compactification of $\KM{g}=\ZZM{g}/\PSL_2(\FF_2)$, that is, the moduli space where we do not choose a basis of the Deck transformations.  We will do so by extending the group action to the compactification $\overline{\ZZM{g}}$, and defining the quotient to be $\overline{\KM{g}}$.

\begin{proposition}
The group action $\PSL_2(\FF_2)\times \ZZM{g}\to\ZZM{g}$ extends to each component of $\overline{\ZZM{g}}$.
\end{proposition}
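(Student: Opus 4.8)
\emph{Plan.} The idea is to reduce to the two standard generators of $\PSL_2(\FF_2)\cong S_3$ and then to build their extensions out of the boundary analysis of Section~3. Recall that $\PSL_2(\FF_2)$ acts on $\ZZM{g}$ by permuting the ordered bases $(\eta_1,\eta_2)$ of $\ZZ_2^2\subset\J(C)[2]$, equivalently the three nonzero elements $\eta_1,\eta_2,\eta_1+\eta_2$; it is generated by the transposition $s\colon(C,\eta_1,\eta_2)\mapsto(C,\eta_2,\eta_1)$ and the three-cycle $t\colon(C,\eta_1,\eta_2)\mapsto(C,\eta_2,\eta_1+\eta_2)$, subject to $s^2=t^3=(st)^2=\mathrm{id}$. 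Fix a component $\overline{\ZZM{g}}^i$; it is reduced and separated and contains $\ZZM{g}^i$ as a dense open. It therefore suffices to extend $s$ and $t$ to morphisms $\overline{\ZZM{g}}^i\to\overline{\ZZM{g}}^i$: the $S_3$ relations hold on $\ZZM{g}^i$, hence on the closure, and the action follows; and since both $s$ and $t$ preserve the Weil pairing on $\ZZM{g}$ --- using $\langle\eta_1,\eta_2\rangle=\langle\eta_2,\eta_1\rangle=\langle\eta_2,\eta_1+\eta_2\rangle$ --- once they are known to extend to morphisms they automatically carry $\overline{\ZZM{g}}^i$ to itself, because they preserve the dense subset $\ZZM{g}^i$.

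The involution $s$ extends for free: it is the restriction to $\overline{\ZZM{g}}^i$ of the factor-swapping involution of $\overline{\ZM{g}}\times_{\overline{\M}_g}\overline{\ZM{g}}$, which preserves the symmetric locus whose closure defines $\overline{\ZZM{g}}$. For $t$, write $t=(p_2,p_3)$, where $p_2\colon(C,\eta_1,\eta_2)\mapsto(C,\eta_2)$ and $p_3\colon(C,\eta_1,\eta_2)\mapsto(C,\eta_1+\eta_2)$ are the second and ``third'' forgetful maps $\ZZM{g}\to\R_g$. Since $p_2$ is the restriction of the second projection of the fibre product it extends trivially, so everything reduces to extending $p_3$ to a morphism $\overline{\ZZM{g}}^i\to\overline{\R}_g$ --- that is, to producing a ``sum'' of the two Prym curves for every $\ZZ_2^2$ curve on the boundary.

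To extend $p_3$ I would use the classification of the fibres of $\overline{\ZZM{g}}\to\overline{\M}_g$ from Section~3 to write the third Prym curve down explicitly, as the appropriate limit, on each type of boundary object, and then verify that this assignment is a morphism by carrying out the construction in families over the universal deformations --- these being well behaved by \cite{MR2117416}. Over a general (one-nodal) boundary point this is transparent, and the work is mostly bookkeeping: one records which boundary divisor of $\overline{\R}_g$ each boundary divisor of $\overline{\ZZM{g}}$ from Section~3 is sent to by $p_3$. For instance, over a $1$-nodal irreducible curve $p_3$ carries the divisor $\Delta_{III,III}^{diag}$, where the two Prym structures agree on the blown-up curve, onto the vanishing-cycle locus $\Delta_I$ (there the third structure specializes to $\delta$), it carries $\Delta_{III,III}'$ into $\Delta_{II}$, and more generally it permutes the $\Delta_I,\Delta_{II},\Delta_{III}$-type strata in the way dictated by the bilinearity of the Weil pairing (which governs whether $\eta_1+\eta_2$ pairs trivially with the vanishing cycle). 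The genuinely delicate point, and what I expect to be the main obstacle, is the behaviour over the deeper strata, where the two Prym structures share several exceptional nodes: there the local model of $\overline{\ZZM{g}}^i$ is a union of several smooth branches --- a many-variable version of the transversality computation behind Theorem~\ref{thm:intersection}, each shared exceptional node contributing a normal-form relation $t_j^2=u_j$ --- and one must check that the family-level ``sum'' glues into a single morphism across those branches. Once $s$ and $t$ are extended in this way, the relations $s^2=t^3=(st)^2=\mathrm{id}$ hold on $\overline{\ZZM{g}}^i$ by reducedness and separatedness, producing the homomorphism $\PSL_2(\FF_2)\to\Aut(\overline{\ZZM{g}}^i)$ that extends the given action, for each of the two components.
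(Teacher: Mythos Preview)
Your strategy is sound and would lead to a proof, but it differs from the paper's. The paper does not reduce to generators; it partitions the boundary into $D_1=\Delta_{I,III}\cup\Delta_{III,I}\cup\Delta_{III,III}^{diag}$ and $D_2=\Delta_{III,III}'\cup\Delta_{II,III}\cup\Delta_{III,II}$, observes that away from $D_1\cup D_2$ the fibres over $\overline{\M_g}$ are reduced so the $S_3$ action on ordered bases extends verbatim, and then over each $D_j$ extends the whole action at once by degenerating an entire orbit and reading off the limiting permutation on the resulting multiplicity-$2$ objects. Your route via extending the single morphism $p_3$ is cleaner to state and fits the fibre-product description well; the paper's route keeps the full orbit structure visible at every step and makes the connection to the multiplicity computations of Section~3 immediate.

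One point of emphasis is off, though, and it hides the real content. You call the codimension-one picture ``transparent'' and locate the genuinely delicate point in deeper strata, but the crux is already at $\Delta_{III,III}'$. There the limit of $\eta_1+\eta_2$ is \emph{not} determined by the boundary object $(\epsilon_1,\epsilon_2)$ alone: the bundle $\epsilon_2\otimes\epsilon_1^{-1}$ on the non-exceptional curve lifts to \emph{two} Prym structures on the stable model (differing by the gluing at the node), and which of the two arises as the limit depends on the branch of $\overline{\ZZM{g}}$ one approaches along. This is precisely why the proposition is stated for each component separately, and it is precisely the content of Theorem~\ref{thm:intersection}. The paper names the correct lift $\eta^i$ on $\overline{\ZZM{g}}^i$ and defines the extended action there as permutation of $(\epsilon_1,\epsilon_2,\eta^i)$. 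Your sentence ``$p_3$ carries $\Delta_{III,III}'$ into $\Delta_{II}$'' is true on each component but suppresses exactly this choice; to justify the extension of $p_3$ over $\Delta_{III,III}'$ you must invoke the transversality to see that $\eta^i$ is the unique limit from within $\overline{\ZZM{g}}^i$. Once that is made explicit, the deeper strata really are the bookkeeping you describe.
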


\begin{proof}
Let $D_1=\Delta_{I,III}\cup\Delta_{III,I}\cup\Delta_{III,III}^{\diag}$ and $D_2=\Delta_{III,III}'\cup \Delta_{II,III}\cup\Delta_{III,II}$.  Then the extension is actually straightforward over $\overline{\ZZM{g}}\setminus (D_1\cup D_2)$, as over this locus, the fibers are reduced, and the action is just by change of basis on a Klein 4 group.  Only in the cases where multiplicities are no longer 1, namely $D_1$ and $D_2$, will these fail to just be Klein four groups.

Now, we take the orbit of a $\ZZ_2^2$-curve in the locus where we have the group action, and degenerate it to $D_1$.  Then, the six objects of multiplicity 1 of this fiber degenerate to $(\eta,\eta)\in\Delta_{III,III}^{\diag}$, $(\eta,\sh{O}^-_X)\in \Delta_{III,I}$ and $(\sh{O}^-_X,\eta)\in\Delta_{I,III}$, where $\sh{O}^-_X$ is the Prym curve structure on $X$ lying in $\Delta_I\subset\overline{\ZM{g}}$.  Each of these appears with multiplicity 2.  Here, the group action can be seen most clearly by noting that $\PSL_2(\FF_2)\cong S_3$ (and in fact, the change of basis on a Klein four group is just permuting the three nonzero elements) and seeing the action as being that of $S_3$ on the ordered set $(\sh{O}_X^-,\eta,\eta)$ followed by forgetting the last element.  Deeper degeneration into the strata $D_1\setminus D_2$ can be handled in the same way, leaving only $D_2$ remaining.

To extend the action to $D_2$, we must first restrict to the individual irreducible components of $\overline{\ZZM{g}}$.  This is because $\Delta_{III,III}'$ is the intersection of the two components, by Theorem \ref{thm:intersection}.  So, by transversality the fiber multiplicity of elements in the intersection must be split evenly between the components.

Now, let $(\epsilon_1,\epsilon_2)\in\Delta_{III,III}'$.  Then $\epsilon_2\otimes\epsilon^{-1}_1$ gives a Prym structure on the closure of the complement of the exceptional components.  There are two different Prym structures on the stable curve that pull back to this under the stabilization map, but one of them lies on each componenet.  We will denote by $\eta^i$ the Prym structure such that $(\eta^i,\epsilon_1),(\eta^i,\epsilon_2)\in\overline{\ZZM{g}}^i$.  Then, the $\PSL_2(\FF_2)\cong S_3$ action is given by permutation of $\epsilon_1,\epsilon_2,\eta^i$.
\end{proof}

This extension allows us to take the quotient, which constructs from the moduli of pairs of Prym structures, which is the same as the moduli of Klein four groups of Prym structures, with the moduli of Klein four groups of Prym covers without a basis.

\begin{definition}[Moduli of Klein four covers]
We define the space $\overline{\KM{g}}$ to be the quotient of $\overline{\ZZM{g}}$ by the relation described above, and we call it the \emph{moduli of Klein four covers of genus $g$ curves}.
\end{definition}

Given an orbit $\{(C,\eta_1^i,\eta_2^i)\}$ where $i$ runs over the elements of the orbit, we will denote by $(C,\{\eta^i_j\}_{i,j})$ the corresponding point of $\overline{\KM{g}}$, with $i$ running over the orbit and $j=1,2$.

The boundary in the Klein moduli space simplifies significantly.  Because the action of $\PSL_2(\FF_2)$ exchanges some boundary components, we group them together and give names to their images (fixing the Weil pairing as either $0$ or $1$ in each case) in the following:

\begin{eqnarray*}
 \Delta_{I,II}\cup \Delta_{II,I}\cup\Delta_{II,II}^\pm&\to&\Delta_{I,II,II}\\
 \Delta_{I,III}\cup\Delta_{III,I}\cup\Delta_{III,III}^{diag}&\to&\Delta_{I,III,III}\\
 \Delta_{II,III}\cup\Delta_{III,II}\cup\Delta_{III,III}'&\to&\Delta_{II,III,III}\\
 \Delta_{II}'&\to&\Delta_{II,II,II}\\
 \Delta_{i,g-i}\cup\Delta_{g-i,i}\cup\Delta_{i,i:g-i}^i\cup\Delta_{i:g-i,i}^i\cup\Delta_{g-i,i:g-i}^{g-i}\cup\Delta_{i:g-i,g-i}^{g-i}&\to&\Delta_{i,g-i,i:g-i}\\
 \Delta_{i,i}&\to&\Delta_{i,i,i}\\
 \Delta_{g-i,g-i}&\to&\Delta_{g-i,g-i,g-i}\\
 \Delta_{i,i:g-i}'\cup \Delta_{i:g-i,i}' \cup\Delta_{i:g-i,i:g-i}^i&\to&\Delta_{i,i:g-i,i:g-i}\\
 \Delta_{g-i,i:g-i}'\cup\Delta_{i:g-i,g-i}'\cup\Delta_{i:g-i,i:g-i}^{g-i}&\to&\Delta_{g-i,i:g-i,i:g-i}\\
 \Delta_{i:g-i,i:g-i}'&\to&\Delta_{i:g-i,i:g-i,i:g-i}
\end{eqnarray*}

Between the degrees computed in the previous section and the maps above all being $\PSL_2(\FF_2)$ quotients, we find the following structure on the boundary

\begin{eqnarray*}
 \Delta_{I,II,II}						&=&	2^{2g-2}-1\mbox{ objects of multiplicity }1\\
 \Delta_{I,III,III}					&=&	2^{2g-2}\mbox{ objects of multiplicity }1\\
 \Delta_{II,III,III}				&=&	2^{2g-2}(2^{2g-2}-1)\mbox{ objects of multiplicity }2\\
 \Delta_{II,II,II}					&=&	\frac{(2^{2g-1}-2)(2^{2g-1}-4)}{6}\mbox{ objects of multiplicity }1\\
 \Delta_{i,g-i,i:g-i}				&=&	(2^{2i}-1)(2^{2(g-i)}-1)\mbox{ objects of multiplicity }1\\
 \Delta_{i,i,i}							&=&	\frac{(2^{2i}-1)(2^{2i}-2)}{6}\mbox{ objects of multiplicity }1\\
 \Delta_{g-i,g-i,g-i}				&=&	\frac{(2^{2(g-i)}-1)(2^{2(g-i)}-2)}{6}\mbox{ objects of multiplicity }1\\
 \Delta_{i,i:g-i,i:g-i}			&=&	\frac{(2^{2i}-1)(2^{2i}-2)(2^{2(g-i)}-1)}{2}\mbox{ objects of multiplicity }1\\
 \Delta_{g-i,i:g-i,i:g-i}		&=&	\frac{(2^{2(g-i)}-1)(2^{2(g-i)}-2)(2^{2i}-1)}{2}\mbox{ objects of multiplicity }1\\
 \Delta_{i:g-i,i:g-i,i:g-i}	&=&	\frac{(2^{2i}-1)(2^{2(g-i)}-1)(2^{2i}-2)(2^{2(g-i)}-2)}{6}\mbox{ objects of}\\
&& \mbox{multiplicity }1
\end{eqnarray*}

This gives us

\begin{proposition}
The morphsim $\overline{\KM{g}}\to\M_g$ has degree $\frac{(2^{2g}-1)(2^{2g}-2)}{6}$ and is simply ramified along $\Delta_{II,III,III}$.
\end{proposition}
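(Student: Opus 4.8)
The plan is to obtain both assertions from the fibre computations over $\partial\overline{\M_g}$ already carried out, together with the realization of $\overline{\KM{g}}$ as a quotient of $\overline{\ZZM{g}}$.

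\emph{Degree.} By Lemma~\ref{DegZZM} the map $\ZZM{g}\to\M_g$ has degree $(2^{2g}-1)(2^{2g}-2)$. Over a curve $C$ with trivial automorphism group the fibre is the set of ordered bases of rank-$2$ subgroups of $\J(C)[2]$, and $\PSL_2(\FF_2)\cong S_3$ acts simply transitively on the six ordered bases of each such subgroup; hence the action is free on the general fibre, so $\KM{g}\to\M_g$ has generic degree $(2^{2g}-1)(2^{2g}-2)/6$, and since $\overline{\KM{g}}\to\overline{\M_g}$ is finite this is its degree. As an independent check one sums the multiplicities of the boundary objects over a general $1$-nodal irreducible curve from the table for $\overline{\KM{g}}$; the total is again $(2^{2g}-1)(2^{2g}-2)/6$, confirming that nothing was missed.

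\emph{Locating the ramification.} The covering $\R_g\to\M_g$, hence $\ZZM{g}\to\M_g$, and hence (the $S_3$-action being free over the trivial-automorphism locus) $\KM{g}\to\M_g$, is unramified over $\M_g$ away from codimension $\ge 2$; on the one codimension-one automorphism locus, the hyperelliptic one, the extra involution acts trivially on $2$-torsion and so on the fibre, so there is still no ramification there. Thus any ramification divisor of $\overline{\KM{g}}\to\overline{\M_g}$ maps onto a boundary divisor of $\overline{\M_g}$, and, a ramification divisor being detected at the generic point, it suffices to inspect the fibres over general $1$-nodal curves. Over a general reducible $1$-nodal curve the table shows all multiplicities are $1$, so the map is unramified; over a general irreducible $1$-nodal curve it gives multiplicity $1$ on $\Delta_{I,II,II}$, $\Delta_{I,III,III}$, $\Delta_{II,II,II}$ and multiplicity $2$ on $\Delta_{II,III,III}$, and the remaining work is to verify that these multiplicities are precisely the local degrees of $\overline{\KM{g}}\to\overline{\M_g}$.

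\emph{Local analysis: the standard cases.} For a boundary object $\bar P$ over a general $1$-nodal irreducible $[Z]$ I would choose a preimage $Q\in\overline{\ZZM{g}}$ lying on a fixed one of the two components, since that is where the $S_3$-action is defined; by Theorem~\ref{thm:intersection} this is legitimate, and the same theorem shows that the multiplicity $4$ of $\overline{\ZZM{g}}\to\overline{\M_g}$ at a point of $\Delta_{III,III}'$ splits as $2+2$ between the components. One then identifies $\overline{\KM{g}}$ near $\bar P$ with $(\overline{\ZZM{g}},Q)$ modulo the $S_3$-stabilizer of $Q$. When $\bar P\in\Delta_{I,II,II}$ or $\Delta_{II,II,II}$ the orbit of $Q$ has six elements with trivial stabilizer and the underlying Prym structures all lie on $Z$, so $(\overline{\ZZM{g}},Q)\to(\overline{\M_g},[Z])$ and its quotient are étale. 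When $\bar P\in\Delta_{II,III,III}$ the orbit of $Q$ inside its component again has six elements with trivial stabilizer, while a $\Delta_{III}$-factor is present, so $(\overline{\ZZM{g}},Q)\to(\overline{\M_g},[Z])$ is a simple double cover; with trivial stabilizer this descends unchanged to the quotient, giving simple ramification.

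\emph{Local analysis: the delicate case, and the main obstacle.} The case $\bar P\in\Delta_{I,III,III}$ is where the interest lies, because here $\overline{\ZZM{g}}\to\overline{\M_g}$ is already ramified (index $2$) along the preimage $\Delta_{I,III}\cup\Delta_{III,I}\cup\Delta_{III,III}^{diag}$, yet $\overline{\KM{g}}$ is unramified. The point is that the six interior points of the orbit degenerate onto only three points of $\overline{\ZZM{g}}$, each carrying a $\ZZ/2$-stabilizer; at the point $(\eta,\eta)\in\Delta_{III,III}^{diag}$ the two interior pairs limiting to it are $(\mu,\mu+\delta)$ and $(\mu+\delta,\mu)$, with $\delta$ the vanishing cycle and $\langle\delta,\mu\rangle=1$, and these are exactly the two sheets of the local double cover over $(\overline{\M_g},[Z])$; the generator of the $\ZZ/2$-stabilizer interchanges them, i.e.\ it acts as the deck involution of that double cover, so $(\overline{\ZZM{g}},Q)/(\ZZ/2)\to(\overline{\M_g},[Z])$ is an isomorphism and $\Delta_{I,III,III}$ is unramified. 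Assembling the cases singles out $\Delta_{II,III,III}$ as the locus of ramification and shows it is simple. I expect the main obstacle to be precisely this bookkeeping near $\Delta_{III,III}'$: confirming that the $S_3$-stabilizers act as asserted on each component separately, and that the multiplicity-$4$ behaviour of $\overline{\ZZM{g}}$ there is consistent with simple ramification downstairs; the remaining cases reduce to orbit counts in the fibre tables already in hand.
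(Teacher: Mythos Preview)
Your proposal is correct and follows the same route as the paper: the degree comes from dividing the degree of $\overline{\ZZM{g}}\to\overline{\M_g}$ by $|\PSL_2(\FF_2)|=6$, and the ramification is read directly from the table of fibre multiplicities for $\overline{\KM{g}}$ over the generic boundary points, where only $\Delta_{II,III,III}$ carries multiplicity $2$. Your write-up is in fact considerably more detailed than the paper's, which leaves the proposition as an immediate consequence of the preceding tables; in particular your explicit local analysis of why the ramification of $\overline{\ZZM{g}}\to\overline{\M_g}$ along $\Delta_{I,III}\cup\Delta_{III,I}\cup\Delta_{III,III}^{diag}$ is absorbed by the $\ZZ/2$-stabilizer in the $S_3$-quotient (so that $\Delta_{I,III,III}$ is unramified downstairs) makes explicit exactly the mechanism that the paper's multiplicity-$1$ entry encodes. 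One small slip: the hyperelliptic locus has codimension $g-2$ in $\M_g$, not $1$, so for $g\ge 4$ there is nothing to check there; your argument for $g=3$ (the involution acts trivially on $2$-torsion) is correct, and for larger $g$ the point is simply that $\ZZM{g}\to\M_g$ is already \'etale as a map of stacks, hence \'etale on coarse spaces wherever the automorphism groups upstairs and downstairs agree.
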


Thus, we have

\begin{corollary}
The canonical divisor of $\overline{\KM{g}}$ is 
\begin{eqnarray*}
K_{\overline{\KM{g}}}&=&13\lambda-2\Delta_{I,II,II}-2\Delta_{I,III,III}-2\Delta_{II,II,II}-3\Delta_{II,III,III}\\
&&-\Delta_{1,g-1,1:g-1}-\Delta_{1,1,1}-\Delta_{g-1,g-1,g-1}\\
&&-\Delta_{1,1:g-1,1:g-1}-\Delta_{g-1,1:g-1,1:g-1}-\Delta_{1:g-1,1:g-1,1:g-1}\\
&&-2\sum_{i=1}^{\lfloor g/2\rfloor}(\Delta_{i,g-i,i:g-i}+\Delta_{i,i,i}+\Delta_{g-i,g-i,g-i}+\Delta_{i,i:g-i,i:g-i}\\
&&+\Delta_{g-i,i:g-i,i:g-i}+\Delta_{i:g-i,i:g-i,i:g-i}).
\end{eqnarray*}
\end{corollary}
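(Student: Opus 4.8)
The plan is to compute $K_{\overline{\KM{g}}}$ by Riemann--Hurwitz applied to the finite morphism $\pi\colon\overline{\KM{g}}\to\overline{\M_g}$, pulling back Mumford's formula $K_{\overline{\M_g}}=13\lambda-2\delta_0-3\delta_1-2\sum_{i=2}^{\lfloor g/2\rfloor}\delta_i$. Since $\lambda$ on $\overline{\KM{g}}$ is by definition $\pi^*\lambda$ (the Hodge class of the universal genus-$g$ curve, not of the cover), and $\overline{\KM{g}}$ and $\overline{\M_g}$ are normal projective varieties, we have $K_{\overline{\KM{g}}}=\pi^*K_{\overline{\M_g}}+R$ with $R$ the ramification divisor; so the computation reduces to (i) expressing each $\pi^*\delta_j$ in terms of the boundary divisors of $\overline{\KM{g}}$ and (ii) identifying $R$.

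For (i): each $\delta_j\subset\overline{\M_g}$ lies under a known list of boundary divisors of $\overline{\KM{g}}$, and the coefficient with which $\Delta_T$ appears in $\pi^*\delta_j$ is the multiplicity of the generic point of $\Delta_T$ in the fiber of $\pi$, as recorded in Section 3 and in the table of boundary fibers given above. Over $\delta_0$ the divisors are $\Delta_{I,II,II},\Delta_{I,III,III},\Delta_{II,III,III},\Delta_{II,II,II}$ with multiplicities $1,1,2,1$, so $\pi^*\delta_0=\Delta_{I,II,II}+\Delta_{I,III,III}+2\Delta_{II,III,III}+\Delta_{II,II,II}$; over $\delta_i$, $1\le i\le\lfloor g/2\rfloor$, the six divisors $\Delta_{i,g-i,i:g-i},\Delta_{i,i,i},\Delta_{g-i,g-i,g-i},\Delta_{i,i:g-i,i:g-i},\Delta_{g-i,i:g-i,i:g-i},\Delta_{i:g-i,i:g-i,i:g-i}$ all have multiplicity $1$ (Propositions \ref{prop:1nidegrees}, \ref{prop:1nrdegrees}), so $\pi^*\delta_i$ is their sum with coefficient $1$. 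For (ii): the preceding Proposition states that $\pi$ is simply ramified exactly along $\Delta_{II,III,III}$ and \'etale in codimension one elsewhere, hence $R=\Delta_{II,III,III}$. Substituting into $\pi^*K_{\overline{\M_g}}+R$: the coefficient of $\Delta_{II,III,III}$ becomes $-2\cdot 2+1=-3$; each of $\Delta_{I,II,II},\Delta_{I,III,III},\Delta_{II,II,II}$ gets $-2$; and since $\delta_1$ carries $-3$ while $\delta_i$ for $i\ge 2$ carries $-2$, the six divisors over $\delta_i$ acquire $-2$ uniformly, with an extra $-1$ for the $i=1$ divisors. Collecting these is exactly the asserted formula.

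The step needing the most care is justifying that $\pi$ has no ramification in codimension one beyond $\Delta_{II,III,III}$, in particular over $\delta_1$, where the generic stable curve $E\cup_p C'$ carries the elliptic involution $\iota_E$. Here the point is that $\iota_E$ acts as $-1=\mathrm{id}$ on $\J(E)[2]$, hence fixes every Prym and Klein structure in the fiber and lifts to an automorphism of every object of $\overline{\KM{g}}$ over $[E\cup_p C']$; thus each boundary divisor over $\delta_1$ has the same generic automorphism group as $\delta_1$, and $\pi$ is unramified there at the level of coarse spaces, consistently with the coefficient $-3$ already built into Mumford's formula. This, together with the control on the universal deformation of a $\ZZ_2^2$ curve inherited from \cite{MR2117416} (which guarantees that the fiber multiplicities recorded in Section 3 really are the ramification indices of $\pi$), is what legitimizes the bookkeeping above. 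Finally, for $g$ even and $i=g/2$ the divisors $\Delta_{i,i,i}$ and $\Delta_{g-i,g-i,g-i}$ (and similarly the mixed ones) coincide, and the sum $\sum_{i=1}^{\lfloor g/2\rfloor}$ is to be read with the usual genus-$g/2$ convention.
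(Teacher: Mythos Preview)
Your proof is correct and follows essentially the same approach as the paper: apply Riemann--Hurwitz to $\pi\colon\overline{\KM{g}}\to\overline{\M_g}$, input Mumford's formula $K_{\overline{\M_g}}=13\lambda-2\delta_0-3\delta_1-2\delta_2-\cdots-2\delta_{\lfloor g/2\rfloor}$, compute $\pi^*\delta_0$ and $\pi^*\delta_i$ from the fiber multiplicities, and use the preceding Proposition to identify $R=\Delta_{II,III,III}$. The paper's version is terser (it simply records the pullback formulas and cites \cite{MR664324}); your added justification that the elliptic tail involution over $\delta_1$ lifts to every Klein structure, so that no extra ramification appears there, is a welcome elaboration but not a different argument.
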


\begin{proof}
We use the Hurwitz formula, which tells us that $K_{\overline{\KM{g}}}=\pi^*K_{\overline{\M_g}}+\Delta_{II,III,III}$.  The canonical divisor of $\overline{\M_g}$ is $13\lambda-2\delta_0-3\delta_1-2\delta_2-\ldots -2\delta_{\lfloor g/2\rfloor}$\cite{MR664324}.

We note that $\pi^*(\Delta_i)=\Delta_{i,g-i,i:g-i}+\Delta_{i,i,i}+\Delta_{g-i,g-i,g-i}+\Delta_{i,i:g-i,i:g-i}+\Delta_{g-i,i:g-i,i:g-i}+\Delta_{i:g-i,i:g-i,i:g-i}$ and $\pi^*\Delta_0=\Delta_{I,II,II}+\Delta_{I,III,III}+\Delta_{II,II,II}+2\Delta_{II,III,III}$ and $\pi^*\lambda=\lambda$.
\end{proof}

Now we'll work out some numerics of the odd and even components of $\overline{\KM{g}}$

\begin{proposition}
The degrees of the natural projection maps to $\overline{\M_g}$ are
\begin{itemize}
 \item for $\overline{\KM{g}}$, 	$\frac{(2^{2g}-1)(2^{2g}-2)}{6}$.
 \item for $\overline{\KM{g}}^0$, $\frac{(2^{2g}-1)(2^{2g-1}-2)}{6}$.
 \item for $\overline{\KM{g}}^1$, $\frac{(2^{2g}-1)2^{2g-1}}{6}$.
\end{itemize}
\end{proposition}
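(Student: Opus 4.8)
The plan is to deduce everything from the degree computations already established, together with the fact that by construction $\overline{\KM{g}}$ is the quotient of $\overline{\ZZM{g}}$ by $\PSL_2(\FF_2)$, a group of order $6$, and that this action is generically free. So first I would recall Lemma \ref{DegZZM}, which gives $\deg(\overline{\ZZM{g}}\to\overline{\M_g})=(2^{2g}-1)(2^{2g}-2)$, and Proposition \ref{DegZZM01}, which refines this as degree $(2^{2g}-1)(2^{2g-1}-2)$ on $\overline{\ZZM{g}}^0$ and $(2^{2g}-1)2^{2g-1}$ on $\overline{\ZZM{g}}^1$. Since degree of a finite dominant map is computed on any dense open set, it is enough to work over the locus of smooth curves, exactly as those results were proved.

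Next I would check that the $\PSL_2(\FF_2)$-action is generically free and respects the two components. Over a general smooth curve $C$, a point in the fiber of $\ZZM{g}\to\M_g$ is a triple $(C,\eta_1,\eta_2)$, which is precisely the datum of an ordered basis of the rank-$2$ subgroup $\langle\eta_1,\eta_2\rangle\subset\J(C)[2]$, and $\PSL_2(\FF_2)\cong\GL_2(\FF_2)$ acts on the fiber by change of ordered basis. Because $\GL_2(\FF_2)$ acts simply transitively on the ordered bases of a fixed $2$-dimensional $\FF_2$-vector space, every orbit has exactly $6$ elements and all stabilizers are trivial, so $\overline{\ZZM{g}}\to\overline{\KM{g}}$ has generic degree $6$. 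Since the Weil pairing is an invariant of the subgroup $\langle\eta_1,\eta_2\rangle$ (as noted just after the definition of the Weil pairing), all six ordered bases of a given rank-$2$ subgroup carry the same pairing value, so the action preserves $\ZZM{g}^0$ and $\ZZM{g}^1$, and hence each restricted quotient $\overline{\ZZM{g}}^i\to\overline{\KM{g}}^i$ also has generic degree $6$.

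Finally, combining these with multiplicativity of degrees of finite maps gives
\[
\deg(\overline{\KM{g}}\to\overline{\M_g})=\tfrac{1}{6}\deg(\overline{\ZZM{g}}\to\overline{\M_g})=\frac{(2^{2g}-1)(2^{2g}-2)}{6},
\]
and the same division by $6$ applied to the two degrees in Proposition \ref{DegZZM01} yields $\frac{(2^{2g}-1)(2^{2g-1}-2)}{6}$ for $\overline{\KM{g}}^0$ and $\frac{(2^{2g}-1)2^{2g-1}}{6}$ for $\overline{\KM{g}}^1$. As a consistency check, these two sum to $\frac{(2^{2g}-1)(2^{2g}-2)}{6}$, matching the splitting $\overline{\KM{g}}=\overline{\KM{g}}^0\cup\overline{\KM{g}}^1$.

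The only step that is not pure bookkeeping is the claim that the action is generically free, i.e.\ that the general point has trivial stabilizer so the quotient degree is exactly $6$ rather than a proper divisor of it; this is immediate from the simply transitive action of $\GL_2(\FF_2)$ on ordered bases. One should also confirm that the quotient of the previous section respects the component decomposition, which it does since the action was extended to $\overline{\ZZM{g}}^0$ and $\overline{\ZZM{g}}^1$ separately. I expect no genuine obstacle: the proposition is essentially a corollary of the earlier degree computations.
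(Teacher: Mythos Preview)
Your proposal is correct and follows essentially the same approach as the paper, which simply states that the proposition follows directly from Proposition~\ref{DegZZM01} and Lemma~\ref{DegZZM}. You have filled in the implicit step---that the $\PSL_2(\FF_2)$-action is generically free (via the simply transitive action of $\GL_2(\FF_2)$ on ordered bases) and preserves the Weil-pairing components---which the paper leaves unsaid but clearly intends.
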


This follows directly from Proposition \ref{DegZZM01} and Lemma \ref{DegZZM}.

As a final computation involving the degrees, we compute the content of the boundary divisors when restricted to $\overline{\KM{g}}^0$ and $\overline{\KM{g}}^1$.

\begin{theorem}
The fiber over the generic element $C$ of the boundary of $\overline{\M}_g$ in $\overline{\KM{g}}$ is:

\begin{enumerate}
 \item if $C$ is irreducible 1-nodal, the fiber in $\overline{\KM{g}}^0$ is 
	\begin{enumerate}
		\item $2^{2g-2}-1$ elements of $\Delta_{I,II,II}^0$ with multiplicity $1$,
		\item $\frac{(2^{2g-1}-2)(2^{2g-2}-4)}{6}$ elements of $\Delta_{II,II,II}^0$ with multiplicity $1$,
		\item ${(2^{2g-1}-2)2^{2g-4}}$ elements of $\Delta_{II,III,III}^0$ with multiplicity $2$,
	\end{enumerate}
 \item if $C$ is irreducible 1-nodal, the fiber in $\overline{\KM{g}}^1$ is 
	\begin{enumerate}
		\item $\frac{(2^{2g-1}-2)2^{2g-2}}{6}$ elements of $\Delta_{II,II,II}^1$ with multiplicity $1$,
		\item ${(2^{2g-1}-2)2^{2g-4}}$ elements of $\Delta_{II,III,III}^1$ with multiplicity $2$,
		\item ${2^{2g-2}}$ elements of $\Delta_{I,III,III}^1$ with multiplicity $1$,
	\end{enumerate}
 \item if $C$ is reducible with components of genus $i$ and $g-i$, the fiber in $\overline{\KM{g}}^0$ is 
	\begin{enumerate}
		\item $(2^{2i}-1)(2^{2(g-i)}-1)$ elements of $\Delta_{i,g-i,i:g-i}^0$ with multiplicity $1$,
		\item $\frac{(2^{2i}-1)(2^{2i-1}-2)}{6}$ elements of $\Delta_{i,i,i}^0$ with multiplicity $1$,
		\item $\frac{(2^{2(g-i)}-1)(2^{2(g-i)-1}-2)}{6}$ elements of $\Delta_{g-i,g-i,g-i}^0$ with multiplicity $1$,
		\item ${(2^{2i-1}-1)(2^{2i-2}-1)(2^{2(g-i)}-1)}$ elements of $\Delta_{i,i:g-i,i:g-i}^0$ with multiplicity $1$,
		\item ${(2^{2(g-i)-1}-1)(2^{2(g-i)-2}-1)(2^{2i}-1)}$ elements of $\Delta_{g-i,i:g-i,i:g-i}^0$ with multiplicity $1$,
		\item $\frac{(2^{2i}-1)(2^{2(g-i)}-1)((2^{2i-1}-2)(2^{2(g-i)-1}-2)+(2^{2i-1})(2^{2(g-i)-1}))}{6}$ elements of\\ $\Delta_{i:g-i,i:g-i,i:g-i}^0$ with multiplicity $1$,
	\end{enumerate}
 \item if $C$ is reducible with components of genus $i$ and $g-i$, the fiber in $\overline{\KM{g}}^1$ is 
	\begin{enumerate}
		\item $\frac{(2^{2i}-1)2^{2i-1}}{6}$ elements of $\Delta_{i,i,i}^1$ with multiplicity $1$,
		\item $\frac{(2^{2(g-i)}-1)2^{2(g-i)-1}}{6}$ elements of $\Delta_{g-i,g-i,g-i}^1$ with multiplicity $1$,
		\item ${(2^{2i-1}-1)(2^{2i-1})(2^{2(g-i)}-2)}$ elements of $\Delta_{i,i:g-i,i:g-i}^1$ with multiplicity $1$,
		\item ${(2^{2(g-i)-1}-1)(2^{2(g-i)-2})(2^{2i}-1)}$ elements of $\Delta_{g-i,i:g-i,i:g-i}^1$ with\\ multiplicity $1$,
		\item $\frac{(2^{2i}-1)(2^{2(g-i)}-1)((2^{2i-1}-2)(2^{2i-1})+(2^{2(g-i)-1})(2^{2(g-i)-1}-2))}{6}$ elements of\\ $\Delta_{i:g-i,i:g-i,i:g-i}^1$ with multiplicity $1$.
	\end{enumerate}
\end{enumerate}
\end{theorem}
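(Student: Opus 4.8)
The plan is to transport the boundary data of $\overline{\ZZM{g}}$ through the quotient maps $\overline{\ZZM{g}}^i\to\overline{\KM{g}}^i=\overline{\ZZM{g}}^i/\PSL_2(\FF_2)$ constructed in the previous section, and then to separate the resulting pieces by Weil pairing. Each boundary component $\Delta_{a,b,c}$ of $\overline{\KM{g}}$ is, by the grouping table preceding the statement, the image of a prescribed union of components $\Delta_{a,b}$ of $\overline{\ZZM{g}}$ whose generic-boundary fibers (object counts and multiplicities) are exactly Propositions \ref{prop:1nidegrees} and \ref{prop:1nrdegrees}. Since $\PSL_2(\FF_2)\cong S_3$ permutes the three nonzero elements of the Klein four group, the first step is pure bookkeeping: for each pattern of Donagi types realized by the three nonzero elements, add the relevant rows of Propositions \ref{prop:1nidegrees} and \ref{prop:1nrdegrees} and divide the object count by the size of the generic $S_3$-orbit on that fiber. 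This orbit has size $6$ when the three limiting elements are pairwise distinct (the quotient is then étale over the locus and multiplicities are unchanged), and size $3$ when two of the limits coincide, as happens for the pattern $\{I,III,III\}$ whose preimage piece $\Delta_{III,III}^{diag}$ is diagonal; in the latter case the quotient acquires ramification index $2$, which is what turns the multiplicity $2$ of the preimage objects into multiplicity $1$ downstairs. Carrying this out reproduces the un-split boundary degrees of $\overline{\KM{g}}$ recorded before the statement.

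The second step is to split each $\Delta_{a,b,c}$ according to the Weil pairing, which is deformation-invariant and so decides membership in $\overline{\KM{g}}^0$ versus $\overline{\KM{g}}^1$; for a Klein group $\{\tau,\rho,\tau+\rho\}$ all three pairwise pairings equal $\langle\tau,\rho\rangle$, so a single value per component suffices. Several components land wholly in one piece for formal reasons: $\langle\delta,\mu\rangle=0$ for $\mu$ of type $II$ puts $\Delta_{I,II,II}$ in $\overline{\KM{g}}^0$, $\langle\delta,\mu\rangle=1$ for $\mu$ of type $III$ puts $\Delta_{I,III,III}$ in $\overline{\KM{g}}^1$, and over a reducible curve the elements $(\mu_1,0),(0,\mu_2),(\mu_1,\mu_2)$ are pairwise orthogonal, so $\Delta_{i,g-i,i:g-i}$ lies in $\overline{\KM{g}}^0$. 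For the components that genuinely split, the computation reduces to a finite symplectic count. Over a reducible $C=C_1\cup_pC_2$ one has $\J(C)[2]=\J(C_1)[2]\oplus\J(C_2)[2]$ with the Weil pairing the orthogonal sum, so, e.g., $\Delta_{i,i,i}$ splits through the numbers $(2^{2i}-1)(2^{2i-1}-2)$ and $(2^{2i}-1)2^{2i-1}$ of Proposition \ref{DegZZM01} with $g$ replaced by $i$, divided by $6$; the larger components $\Delta_{i,i:g-i,i:g-i}$, $\Delta_{g-i,i:g-i,i:g-i}$ and $\Delta_{i:g-i,i:g-i,i:g-i}$ are the same count combined with an inclusion--exclusion for the ``genuinely spread over both components'' conditions, which produces the two forms of paired summands appearing in items (3f) and (4f). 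Over an irreducible $1$-nodal $Z$ with vanishing cycle $\delta$ and normalization $Z^\nu$ of genus $g-1$, the type-$II$ elements are $\delta^\perp\setminus\{0,\delta\}$, on which the Weil pairing descends to the nondegenerate pairing on $\delta^\perp/\langle\delta\rangle\cong\J(Z^\nu)[2]$, reducing the split of $\Delta_{II,II,II}$ to the same symplectic count in dimension $2g-2$.

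The real obstacle is $\Delta_{II,III,III}$, equivalently its preimage piece $\Delta_{III,III}'$, which by Theorem \ref{thm:intersection} is exactly $\overline{\ZZM{g}}^0\cap\overline{\ZZM{g}}^1$ and on which, by Remark \ref{Intersection}, the Weil pairing is indeterminate, so it cannot be split by a pairing value. Here I would argue as in the extension of the $\PSL_2(\FF_2)$ action to $D_2$: transversality of the intersection (Theorem \ref{thm:intersection}) distributes the multiplicity $4$ of a point $(\epsilon_1,\epsilon_2)\in\Delta_{III,III}'$ as $2$ in $\overline{\ZZM{g}}^0$ and $2$ in $\overline{\ZZM{g}}^1$, and the two Prym structures $\eta^0,\eta^1$ on the stable model lifting $\epsilon_2\otimes\epsilon_1^{-1}$ (both of type $II$, differing by $\delta$, hence of opposite pairing against the type-$III$ class $\epsilon_1$) complete $\{\epsilon_1,\epsilon_2\}$ to two distinct Klein four groups, one in each component. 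Thus the set of $\{II,III,III\}$-triples over $Z$ is in bijection with (unordered pairs of distinct type-$III$ classes)$\,\times\,$(choice of component), which forces $\Delta_{II,III,III}$ to split evenly and yields the equal counts $(2^{2g-1}-2)2^{2g-4}$ in parts (1c) and (2b); within $\overline{\ZZM{g}}^i$ the six ordered pairs attached to such a triple are then distinct, of multiplicity $2$, and form a free $S_3$-orbit, so the image has multiplicity $2$. Over reducible curves the analogous components $\Delta_{i:g-i,i:g-i}'$ and its restrictions cause no such difficulty, since there $\overline{\ZZM{g}}^0$ and $\overline{\ZZM{g}}^1$ are disjoint (Theorem \ref{thm:intersection}) and the linear-algebra count above applies directly. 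As a final check that nothing has been omitted and no arithmetic has slipped, one verifies that summing object count times multiplicity over all the fiber pieces in (1)--(4) returns, for each of $\overline{\KM{g}}^0$ and $\overline{\KM{g}}^1$, the degree over $\overline{\M}_g$ equal to one sixth of that in Proposition \ref{DegZZM01}.
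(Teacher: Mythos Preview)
Your proposal is correct and follows essentially the same approach as the paper: determine membership in $\overline{\KM{g}}^0$ versus $\overline{\KM{g}}^1$ by evaluating the Weil pairing on the Klein group (which forces $\Delta_{I,II,II}\subset\overline{\KM{g}}^0$, $\Delta_{I,III,III}\subset\overline{\KM{g}}^1$, $\Delta_{i,g-i,i:g-i}\subset\overline{\KM{g}}^0$), reduce the genuinely splitting components to symplectic counts (for $\Delta_{II,II,II}$ on $\delta^\perp/\langle\delta\rangle$, for the reducible pieces via the orthogonal-sum decomposition $\J(C)[2]=\J(C_1)[2]\oplus\J(C_2)[2]$), and treat $\Delta_{II,III,III}$ separately as the locus where the pairing is indeterminate and must split evenly. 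The only cosmetic difference is your justification for that even split: you invoke the transversality of Theorem \ref{thm:intersection} and the two lifts $\eta^0,\eta^1$ differing by $\delta$, whereas the paper phrases it as the choice of gluing $\pm 1$ at the node realizing both pairing values; these are the same mechanism viewed from the two sides of the normalization map, and your added total-degree check against Proposition \ref{DegZZM01} is a welcome sanity check the paper leaves implicit.
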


\begin{proof}
We will work out parts 1 and 3, parts 2 and 4 being analagous.

With the exception of $\Delta_{II,III,III}$, we can compute the Weil pairing by choosing any pair of elements in the group.  For $\Delta_{II,III,III}$, we note that it must be divided evenly between the components.  The groups consist of two elements from $\Delta_{III}$ and the one element of $\Delta_{II}$, and can be chosen to either be glued by $+1$ or $-1$ at the node. These will correspond to Weil pairing $0$ and $1$, thus dividing $\Delta_{II,III,III}$ evenly.

For $\Delta_{I,II,II}$, by definition, we must have $\Delta_{I,II,II}^0=\Delta_{I,II,II}$.  Similarly, we can see that $\Delta_{I,III,III}^0=\emptyset$.  We can finish by computing that $\Delta_{II,II,II}^0$ must be the correct size to, with the other components, add up to $\frac{(2^{2g}-1)(2^{2g-1}-2)}{6}$.  However, we can compute this directly by choosing $\mu_1,\mu_2\in\Delta_{II}$ distinct and orthogonal.  Then, if $\delta$ is the vanishing cycle, we have $\mu_1\in\delta^\perp\setminus(\delta)$ and $\mu_2\in (\delta,\mu_1)^\perp\setminus(\delta,\mu_1)$, which gives the appropriate number.

Over a reducible curve $C=C_i\cup C_{g-i}$, although the expressions are more complex, the situation is simpler.  We begin by noting that everything in $\Delta_{i,g-i,i:g-i}$ must be in $\Delta_{i,g-i,i:g-i}^0$, because the generators have no common support curve.  As for $\Delta_{i,i,i}$ and $\Delta_{g-i,g-i,g-i}$, they will be precisely the fibers of the lower genus maps $\KM{i}^0\to\M_i$ and $\KM{g-i}^0\to \M_{g-i}$.  On $\Delta_{i,i:g-i,i:g-i}$, we can have any nonzero square trivial line bundle on the component $C_i$, and the second generator can have any restriction to $C_{g-i}$, but the restriction to $C_i$ must be orthogonal to the first, and here, we only divide by two choices in $\Delta_{i:g-i}$ that can be basis elements.  The next component, $\Delta_{g-i,i:g-i,i:g-i}$, can be computed in a similar way.  The final component, $\Delta_{i:g-i,i:g-i,i:g-i}$ starts with an arbitary element of $\Delta_{i:g-i}$, and the second must either have both restrictions orthogonal to those of the first, or else both nonorthogonal, and then we divide by 6 from choices of basis, completing the computation.
\end{proof}

\section{Pluricanonical forms}

In this section, we show that pluricanonical forms on $\overline{\KM{g}}$ extend to any smooth model, allowing us to compute the Kodaira dimension on $\overline{\KM{g}}$ itself, rather than having to work on the set of smooth models.  As such, the goal of this section is to prove

\begin{theorem}
\label{extensiontheorem}
Fix $g\geq 4$ and $i\in\{0,1\}$, and let $\widehat{\ZZM{g}}^i\to\overline{\ZZM{g}}^i$ be any resolution of the singularities.  Then every pluricanonical form defined on $\overline{\ZZM{g}}^{i,reg}$, the smooth locus, extends holomorphically to $\widehat{\ZZM{g}}^i$.  Specifically, for all integers $\ell\geq 0$, we have isomorphisms \[H^0(\overline{\ZZM{g}}^{i,reg},K_{\overline{\ZZM{g}}^{i,reg}}^{\otimes\ell})\cong H^0(\widehat{\ZZM{g}}^i,K_{\widehat{\ZZM{g}}^i}^{\otimes\ell})\]
\end{theorem}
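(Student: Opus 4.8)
The plan is to follow the now-standard extension machinery of Harris--Mumford \cite{MR664324}, Farkas--Ludwig \cite{MR2551759} and Farkas--Verra \cite{MR2639318}: reduce the statement to a pointwise check of the Reid--Shepherd-Barron--Tai (RSBT) criterion \cite{MR605348,MR763023} at every singular point of $\overline{\ZZM{g}}^i$, together with a separate treatment of the locus where the coarse space fails to have canonical singularities because of the boundary divisor along which the covering $\overline{\ZZM{g}}^i\to\overline{\M_g}$ is ramified or where there are elliptic tails. Concretely, if $\overline{\ZZM{g}}^i$ had canonical singularities, then pluricanonical forms on the smooth locus extend automatically to any resolution; so the content is (i) identifying the singularities, and (ii) verifying the RSBT inequality at each.

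First I would recall, as the excerpt already flags, that $\overline{\ZZM{g}}^i$ is locally modeled near a point $(C,\eta_1,\eta_2)$ by $\mathrm{Def}(C)/\Aut(C,\eta_1,\eta_2)$, and that $\Aut(C,\eta_1,\eta_2)=\Aut(X_1,\eta_1,\beta_1)\times_{\Aut(C)}\Aut(X_2,\eta_2,\beta_2)$, so the local structure is governed by the action of a finite group on the $(3g-3)$-dimensional universal deformation space of the underlying curve, exactly as in \cite{MR2117416} and \cite{MR2639318}. The key input is that the action of any inessential automorphism is trivial on a neighborhood of a generic point of each boundary divisor except along the ramification divisor, so that away from a codimension-$\geq 2$ locus the only automorphisms to worry about are those induced by automorphisms of the stable curve $C$ itself. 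One then runs the Harris--Mumford/Ludwig analysis: the dangerous elements are quasi-reflections (pseudo-reflections), which produce the only divisorial contributions and must be excluded or accounted for via the ramification term, and junior elements of the stabilizer, for which the age/RSBT inequality $\sum\{a_j\}\geq 1$ must be checked. The upshot is the list of "bad" loci: elliptic tails (automorphism of order up to $6$ on a genus-$1$ tail), and the special boundary strata enumerated in the statement of the main theorem, namely $\Delta_{II,III,III}$ (the ramification divisor), $\Delta_{1,g-1,1:g-1}$, $\Delta_{1,1,1}$, $\Delta_{g-1,g-1,g-1}$, $\Delta_{1,1:g-1,1:g-1}$, $\Delta_{g-1,1:g-1,1:g-1}$, $\Delta_{1:g-1,1:g-1,1:g-1}$.

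Next I would carry out the local RSBT computation at each such point. For a curve with an elliptic tail one writes down the explicit action of the automorphism on the tangent space to deformation space in the Harris--Mumford coordinates (the node-smoothing parameters and the moduli of the tail), compute the eigenvalues, and verify that the sum of fractional parts of the weights is $\geq 1$; this is identical to the computation in \cite{MR664324} and needs $g\geq 4$ exactly to guarantee the complementary genus-$(g-1)$ component carries enough moduli so that the problematic elements remain junior-free there. For the special boundary strata listed above, the elliptic-tail-type automorphisms interact with the exchange of Prym/covering data — e.g. along $\Delta_{1,1,1}$ the covering data is essentially a point-of-order-two on an elliptic curve whose automorphism group acts nontrivially — and one checks the RSBT inequality is still satisfied after quotienting by the relevant finite group, possibly with the exception of a genuinely non-canonical (but still only mildly so) contribution, which is what forces the slope restriction in the final theorem rather than an unconditional statement. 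Finally, the ramification divisor $\Delta_{II,III,III}$ is handled by the Hurwitz-formula bookkeeping already done in Section 4, so its contribution appears with the correct coefficient and does not obstruct extension.

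The main obstacle I anticipate is not any single inequality but the combinatorial bookkeeping: correctly pinning down, for each of the seven exceptional boundary strata, the precise finite group acting on the local deformation space together with the covering data, distinguishing genuine pseudo-reflections (which only affect the ramification count) from junior elements (which would obstruct extension if the age were $<1$), and confirming that in every case $g\geq 4$ suffices. This requires transporting the automorphism-group identification $\Aut(C,\eta_1,\eta_2)=\Aut(X_1,\eta_1,\beta_1)\times_{\Aut(C)}\Aut(X_2,\eta_2,\beta_2)$ through each degeneration and matching it with the explicit boundary-stratum descriptions of Section 3, then invoking the RSBT criterion exactly as in \cite{MR2551759} and \cite{MR2639318}. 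Once that is in place, the isomorphism $H^0(\overline{\ZZM{g}}^{i,reg},K^{\otimes\ell})\cong H^0(\widehat{\ZZM{g}}^i,K^{\otimes\ell})$ follows formally from the fact that a resolution of a variety with canonical singularities (away from the controlled ramification locus) induces isomorphisms on spaces of pluricanonical forms.
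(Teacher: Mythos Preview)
Your proposal misidentifies both the non-canonical locus and the mechanism by which extension is achieved. The seven boundary strata $\Delta_{II,III,III}$, $\Delta_{1,g-1,1:g-1}$, $\Delta_{1,1,1}$, etc.\ have nothing to do with the singularities of $\overline{\ZZM{g}}^i$: they appear in the slope theorem only because their coefficients in $K_{\overline{\KM{g}}}$ are $3$ rather than $2$, which is a Hurwitz-formula bookkeeping fact about the map to $\overline{\M_g}$, not a statement about canonical singularities. (Note also that the extension theorem is stated for $\overline{\ZZM{g}}^i$, not for $\overline{\KM{g}}^i$, so the ramification of $\overline{\KM{g}}\to\overline{\M_g}$ is not directly in play.) The actual non-canonical locus, as the paper establishes following \cite{MR2639318} and \cite{MR2551759}, consists precisely of curves with an elliptic tail of $j$-invariant $0$ on which both $\eta_1$ and $\eta_2$ are trivial; the order-$3$ or order-$6$ automorphism there \emph{fails} the Reid--Shepherd-Barron--Tai inequality, contrary to your claim that one ``verifies that the sum of fractional parts of the weights is $\geq 1$.''

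Because RSBT genuinely fails on this locus, the extension of pluricanonical forms is \emph{not} automatic, and an additional argument is required. The paper supplies it via the Harris--Mumford pencil trick: one embeds the bad point in a pencil over $\overline{\M_{1,1}}$ by varying the $j$-invariant of the tail, observes that the restriction of $\overline{\ZZM{g}}^i\to\overline{\M_g}$ is a local isomorphism along this pencil (since both line bundles are trivial on the tail), and then invokes the explicit construction on pages 41--44 of \cite{MR664324} to extend $\omega$ across a resolution of a neighborhood of the pencil. Your proposal omits this step entirely and instead suggests that any residual non-canonical contribution ``forces the slope restriction in the final theorem rather than an unconditional statement''---but the extension theorem \emph{is} unconditional, and the slope restrictions arise for an unrelated reason. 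Without the pencil argument, the proof has a genuine gap at the $j=0$ elliptic-tail locus.
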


Analogues of this theorem are known for all of the relevant related moduli spaces: $\overline{\M_g}$ is proved in \cite[Theorem 1]{MR664324}, $\overline{\R_g}$ is proved in \cite[Theorem 6.1]{MR2639318}, and the moduli of spin curves in \cite[Theorem 4.1]{MR2551759}.  Our proof will very closely follow the one in \cite{MR2639318} for $\overline{\R_g}$, which can be expected as $\overline{\ZZM{g}}$ is two of the components of $\overline{\R_g}\times_{\overline{\M}_g}\overline{\R_g}$.

Before we can proceed, we need to make a few remarks about the versal deformations of an object $X=(X_1,X_2,\eta_1,\eta_2,\beta_1,\beta_2)\in\overline{\ZZM{g}}$.  Let $\CC_t^{3g-3}$ be the versal deformation space of $Z$, the stabilization of $X_i$ and $\CC_\alpha^{3g-3}$ the versal deformation space of $X$.  There are compatible decompositions 
\begin{eqnarray*}
\CC_\alpha^{3g-3}&\cong&\bigoplus_{p_i\in\Delta_{X_1}^c\cap\Delta_{X_2}}\CC_{\tau_i}\oplus\bigoplus_{p_i\in\Delta_{X_2}^c\cap\Delta_{X_1}}\CC_{\tau_i}\oplus\bigoplus_{p_i\in\Delta_{X_1}^c\cap\Delta_{X_2}^c}\CC_{\tau_i}\\
&&\oplus\bigoplus_{p_i\in\Delta_{X_1}\cap\Delta_{X_2}}\CC_{\tau_i}\oplus\bigoplus_{C_j\subset C}H^1(C_j^\nu,T_{C_j^\nu}(-D_j))\\
\CC_t^{3g-3}&\cong&\bigoplus_{p_i\in\Sing(C)} \CC_{t_i}\oplus\bigoplus_{C_j\subset C}H^1(C_j^\nu,T_{C_j^\nu}(-D_j))
\end{eqnarray*}
where $D_j$ is the sum of the preimages of the nodes under the normalization map.  There is a natural map from the versal deformation space of a $\ZZ_2^2$ curve to that of the underlying stable curve, given by $t_i=\alpha_i^2$ if $t_i=0$ is the locus where the exceptional node $p_i\in\Delta_{X_1}^c\cup \Delta_{X_2}^c$ persists and $t_i=\alpha_i$ otherwise.  Similarly to the discussion in Section 1.2 of \cite{MR2117416}, we can blow up along all of the exceptional components and extend $\eta_1,\eta_2$ using only those in $\Delta_{X_1}^c$ and $\Delta_{X_2}^c$ respectively.

This description makes the rest of the work in Section 6 of \cite{MR2639318} relatively straightforward to generalize.  Set $X_\Delta$ to be the quasi-stable curve with exceptional nodes $\Delta_{X_1}^c\cup\Delta_{X_2}^c$.

\begin{definition}[Elliptic tail]
Let $X$ be a quasi-stable curve, a component $C_j$ is an elliptic tail if it has arithmetic genus 1 and intersects the rest of the curve in a single point.  That point is called an elliptic tail node, and any automorphism of $X$ that is the identity away from $C_j$ is an elliptic tail automorphism.
\end{definition}

\begin{proposition}
Let $\sigma\in \Aut(X)$ be an automorphism in genus $g\geq 4$.  Then $\sigma$ acts on $\CC_\alpha^{3g-3}$ as a quasi-reflection if and only if $X_\Delta$ has an elliptic tail $C_j$ such that $\sigma$ is the elliptic tail involution with respect to $C_j$.
\end{proposition}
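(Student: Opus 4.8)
The plan is to transcribe the Harris--Mumford analysis of quasi-reflections on $\overline{\M_g}$ to the versal deformation $\CC_\alpha^{3g-3}$, using the decomposition recalled above together with the finite, automorphism-equivariant map $f\colon\CC_\alpha^{3g-3}\to\CC_t^{3g-3}$ to the versal deformation of the stabilization $Z$ (given by $t_i=\alpha_i$ or $t_i=\alpha_i^2$ on node coordinates and the identity on the blocks $H^1(C_j^\nu,T_{C_j^\nu}(-D_j))$). For the direction $(\Leftarrow)$, suppose $X_\Delta$ has an elliptic tail $C_j$ and $\sigma$ is the associated elliptic tail involution, so $\sigma|_{C_j}=[-1]$ and $\sigma$ is the identity on $\overline{X_\Delta\setminus C_j}$. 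I would first check $\sigma\in\Aut(X)$: since $C_j$ is non-exceptional, each $\eta_k|_{C_j}$ is a $2$-torsion line bundle on the genus-one curve $C_j$, hence fixed by $[-1]$ and admitting a lift, and the resulting Prym automorphisms of $(X_1,\eta_1,\beta_1)$ and $(X_2,\eta_2,\beta_2)$ induce the same automorphism of $Z$, so they assemble into an automorphism of the $\ZZ_2^2$ curve. Then $\sigma$ acts as the identity on every summand of $\CC_\alpha^{3g-3}$ not attached to $C_j$, so it remains to examine its action on $H^1(C_j^\nu,T_{C_j^\nu}(-D_j))$ and on the smoothing parameters of the node(s) of $X_\Delta$ meeting $C_j$; exactly as in \cite{MR664324}, $[-1]$ acts by $-1$ on precisely one of these coordinates (the one associated to the elliptic tail node) and trivially on the rest, so its fixed locus is a hyperplane and $\sigma$ is a quasi-reflection.

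For the direction $(\Rightarrow)$ I would split according to the induced automorphism $\bar\sigma\in\Aut(Z)$. Since $\sigma$ acts linearly on $\CC_\alpha^{3g-3}$, a quasi-reflection has fixed locus a hyperplane; because $f$ is finite and $(\sigma,\bar\sigma)$-equivariant and is the identity on the interior blocks, $f$ carries this hyperplane onto a subvariety of dimension $\geq 3g-4$ inside the linear fixed locus of $\bar\sigma$, so $\bar\sigma$ fixes a subspace of codimension $\leq 1$. If $\bar\sigma\neq\mathrm{id}$, then since $\bar\sigma$ has finite order it has an eigenvalue $\neq 1$, so its fixed space has codimension exactly one, i.e. $\bar\sigma$ is a quasi-reflection on $\CC_t^{3g-3}$; the classical classification for a stable curve of genus $g\geq 4$ (\cite{MR664324}, transcribed as in \cite{MR2639318}) then says $Z$ has an elliptic tail $E_0$ and $\bar\sigma$ is its elliptic tail involution. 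Pulling back, the component of $X_\Delta$ over $E_0$ is an elliptic tail $C_j$ and $\sigma|_{C_j}=[-1]$; and $\sigma$ must be the identity on the rest of $X_\Delta$, because any nontrivial inessential modification of $\sigma$ supported away from $C_j$ would make $\sigma$ act nontrivially on a further exceptional coordinate $\alpha_i$, contradicting that its fixed locus has codimension exactly one. Hence $\sigma$ is the elliptic tail involution with respect to $C_j$.

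It then remains to exclude the inessential case $\bar\sigma=\mathrm{id}$, i.e. $\sigma\in\Aut_0(X)$, and I expect this to be the main obstacle, since it is where genuinely new bookkeeping for $\ZZ_2^2$ curves is needed rather than a routine adaptation of the $\overline{\M_g}$ and $\overline{\R_g}$ arguments. Such a $\sigma$ is the identity on all summands of $\CC_\alpha^{3g-3}$ except the exceptional-node coordinates, on which it acts by scaling and possibly permuting the $\alpha_i$. The claim to establish is that no nontrivial element of $\Aut_0(X)$ acts by a root of unity $\neq 1$ on exactly one $\alpha_i$: compatibility of an automorphism of an exceptional $\PP^1$ with the restrictions $\eta_1|_E$ and $\eta_2|_E$ at both of its nodes, combined with the normalization $\alpha_i^2=t_i$ along the exceptional divisors, forces any nontrivial such action to involve at least two of the $\alpha_i$; equivalently, the square-root behaviour of $\overline{\ZZM{g}}\to\overline{\M_g}$ along the exceptional boundary is already encoded in the coordinates $\alpha_i$ and is not produced by an automorphism. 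Granting this (the Prym-curve analogue is in \cite{MR2117416} and \cite{MR2639318}), the fixed locus of a nontrivial $\sigma\in\Aut_0(X)$ has codimension different from one, so it is not a quasi-reflection, and the proof is complete.
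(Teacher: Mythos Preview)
Your proposal is correct and is exactly the approach the paper takes: its proof is the single sentence ``follows from the proof of \cite[6.6]{MR2639318}'', and you have unpacked that Farkas--Ludwig argument in the $\ZZ_2^2$ setting (direct check for $(\Leftarrow)$; for $(\Rightarrow)$, push the fixed hyperplane along the equivariant finite map $f$ to reduce to the Harris--Mumford classification on $\CC_t^{3g-3}$ when $\bar\sigma\neq\mathrm{id}$, and treat the inessential case by appeal to \cite{MR2117416,MR2639318}). Your flagged uncertainty about the inessential case $\bar\sigma=\mathrm{id}$ is fair, but the paper offers no further detail there either, so your treatment is at least as complete as the paper's own.
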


The proof of this proposition follows from the proof of \cite[6.6]{MR2639318}.  It implies that the smooth locus of $\overline{\ZZM{g}}$ is the locus where the automorphism group is generated by elliptic tail involutions.  Now that we have determined the smooth locus, we must determine the non-canonical locus.  If $G$ acts on a vector space $V$ by quasi-reflections, then $V/G\cong V$, so we let $H\subset\Aut(X_1,X_2,\eta_1,\eta_2,\beta_1,\beta_2)$ be generated by automorphisms acting as quasi-reflections, that is elliptic tail involutions. Then $\CC^{3g-3}_\alpha/H\cong \CC^{3g-3}_\nu$ where $\nu_i=\alpha_i^2$ if $p_i$ is an elliptic tail node and $\nu_i=\alpha_i$ else.  On $\CC^{3g-3}_\nu$, the automorphisms act without quasi-reflections, so the Reid--Shepherd-Barron--Tai criterion can be applied.

\begin{theorem}[Reid--Shepherd-Barron--Tai Criterion \cite{MR605348,MR763023}]
Let $V$ be a vector space of dimension $d$, $G\subset \GL(V)$ a finite group and $V_0\subset V$ the open set where $G$ acts freely.  Fix $g\in G$, and let $g$ be conjugate to a diagonal matrix with $\zeta^{a_i}$ for $i=1,\ldots,d$ on the diagonal for $\zeta$ a fixed $m^{th}$ root of unity and $0\leq a_i<m$.  If for all $g$ and $\zeta$, we have $\frac{1}{m}\sum_{i=1}^d a_i\geq 1$, then any $n$-canonical form on $V_0/G$ extends holomorphically to a resolution $\widetilde{V/G}$.
\end{theorem}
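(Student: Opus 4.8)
The plan is to derive the criterion from the fact that, under the hypothesis, $V/G$ has canonical singularities, after which extension of pluricanonical forms is formal; the proof of canonicity then reduces to the classical combinatorial computation of discrepancies of cyclic quotient singularities, following Reid \cite{MR605348} and Tai \cite{MR763023}. First I would observe that the hypothesis rules out quasi-reflections in $G$: if $g\in G$ acts as a quasi-reflection, then in the notation of the statement all but one of the $a_i$ vanish and the exceptional one lies in $(0,m)$, so $\frac1m\sum_i a_i<1$, contradicting the hypothesis applied to this $g$. Consequently $G$ acts freely outside a closed subset of codimension $\ge 2$, so $V/G$ is normal with $\operatorname{codim}\Sing(V/G)\ge 2$, the quotient map $p\colon V\to V/G$ is \'etale in codimension $1$, and every section of $K^{\otimes n}$ over $V_0/G$ extends (uniquely) to $(V/G)_{\mathrm{reg}}$ and thence reflexively over all of $V/G$. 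Granting that a resolution $\rho\colon\widetilde{V/G}\to V/G$, chosen to be an isomorphism over $(V/G)_{\mathrm{reg}}$, satisfies $K_{\widetilde{V/G}}=\rho^*K_{V/G}+\sum_i\delta_iE_i$ with all $\delta_i\ge 0$, the theorem follows at once: for $\omega$ a section of $K^{\otimes n}$ on $V_0/G$ with reflexive extension $\bar\omega$ over $V/G$, one has $\operatorname{div}_{K^{\otimes n}_{\widetilde{V/G}}}(\rho^*\bar\omega)=\rho^*\operatorname{div}(\bar\omega)+n\sum_i\delta_iE_i\ge 0$ because $n\ge 0$, so $\rho^*\omega$ extends holomorphically.

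It therefore remains to prove that $V/G$ is canonical, i.e.\ that all $\delta_i\ge 0$. This is a local question, and \'etale-locally near a point $p(z)$ the germ of $V/G$ is isomorphic to that of $T_zV/H$, where $H=\operatorname{Stab}_G(z)$ acts through the differential of its action on $V$. Since the $G$-action on $V$ is linear and $z$ is $H$-fixed, this differential representation is canonically identified with the given linear representation of $H$; hence $H$ is a finite subgroup of $\GL(V)$ with no quasi-reflections, and every $h\in H$ has the same eigenvalues, and so the same value of $\frac1m\sum a_i$, as when regarded as an element of $G$. Thus it suffices to show: every such finite $H\subset\GL(V)$, with no quasi-reflections and $\frac1m\sum a_i\ge 1$ for all its elements, has $V/H$ canonical at the origin.

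For this I would argue valuation-theoretically. Fix a prime divisor $E$ over $V/H$; I must show its discrepancy is $\ge 0$. The corresponding divisorial valuation of $\CC(V)^H$ lifts to a divisorial valuation of $\CC(V)$ whose inertia group is a cyclic subgroup $\langle h\rangle\subseteq H$, say of order $e$; since the inertia is exactly $\langle h\rangle$, the finite map $V/\langle h\rangle\to V/H$ is unramified along the relevant divisor, so it is enough to prove $V/\langle h\rangle$ is canonical. But $V/\langle h\rangle$ is a toric variety: diagonalizing $h=\diag(\zeta^{a_1},\dots,\zeta^{a_d})$ with $0\le a_i<e$, the ring of $\langle h\rangle$-invariant monomials identifies $V/\langle h\rangle$ with the affine toric variety of the cone $\mathbb R^d_{\ge 0}$ inside the overlattice $N=\ZZ^d+\ZZ\cdot\tfrac1e(a_1,\dots,a_d)$. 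Each exceptional divisor of a toric resolution corresponds to a primitive lattice vector $\tfrac1e(b_1,\dots,b_d)\in N\cap\mathbb R^d_{\ge 0}$ and has discrepancy $\sum_i\tfrac{b_i}{e}-1$; since canonicity of a toric variety is detected on toric exceptional divisors, and a short computation shows the quantity $\sum_i\tfrac{b_i}{e}$ over all such vectors is minimized at the classes $\overline{\tfrac ke(a_1,\dots,a_d)}$ reduced coordinatewise into $[0,1)^d$ (for $k=1,\dots,e-1$), where it equals $\frac1e\sum_i(ka_i\bmod e)$, i.e.\ the ``age'' of $h^k$, the hypothesis $\frac1e\sum_i(ka_i\bmod e)\ge 1$ forces all these discrepancies to be $\ge 0$. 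Hence $V/\langle h\rangle$, and therefore $V/H$ and $V/G$, is canonical, which finishes the argument.

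The main obstacle, as flagged, is this last combinatorial heart, namely Tai's lemma: the passage from an arbitrary divisorial valuation over the quotient to the monomial valuations attached to powers of a single diagonal element, together with the clean identification of the resulting discrepancies with the ``ages'' $\frac1e\sum_i(ka_i\bmod e)-1$. By contrast, the surrounding reductions (excluding quasi-reflections, the \'etale-local linearization, transferring the eigenvalue hypothesis to point stabilizers, and the implication from canonicity to extension of pluricanonical forms) are formal once set up carefully; this is precisely why the statement is usually invoked, as here, as a black box via \cite{MR605348,MR763023}.
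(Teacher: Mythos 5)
The paper does not prove this statement at all: it is imported as a black box from Reid and Tai \cite{MR605348,MR763023}, so there is no internal proof to compare against. Your sketch correctly reconstructs the standard argument of those references --- no quasi-reflections, hence free action in codimension one and reflexive extension to $(V/G)_{\mathrm{reg}}$; reduction of canonicity to point stabilizers and then, via the cyclic inertia group of an arbitrary divisorial valuation, to a cyclic quotient; and the toric ``age'' computation showing all discrepancies are nonnegative --- and the one point needing care is the one you flag (the literal hypothesis must be read as ranging over nontrivial $g$, since the identity has age $0$). So the proposal is sound; it simply proves the cited result rather than matching any argument in the paper.
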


It is straightforward to check that for $g\geq 4$, we have a noncanonical singularity if $X_\Delta$ has an elliptic tail $C_j$ with $j$-invariant $0$ such that $\eta_1,\eta_2$ are both trivial on $C_j$.  This goes as in \cite{MR2639318}, where the action of $\sigma$ is determined to be as the square of a sixth root of unity in two coordinates for an automorphism of order $6$ and as a cube root of unity in those two coordinates for an order $3$ element.  Both of these fail the Reid--Shepherd-Barron--Tai criterion.

Now, assuming that we have a noncanonical singularity, then we have an automorphism $\sigma$ of order $n$ failing Reid--Shepherd-Barron--Tai.  Our goal is to classify such things, and eventually show that only the examples above exist.  Let $p_{i_0}$, $p_{i_1}=\sigma(p_{i_0})$,$\ldots$, $\sigma^{m-1}(p_{i_0})=p_{i_{m-1}}$ be distinct nodes of the stabilization, $C$, which are permuted by $\sigma$ and not elliptic tail nodes.  The action on the subspace corresponding to these nodes is then given by a matrix \[\left(\begin{array}{cccc}0&c_1&&\\\vdots&&\ddots&\\0&&&c_{m-1}\\c_m&0&\ldots&0\end{array}\right)\] for some complex numbers $c_j$.  We call the pair $(X,\sigma)$ \emph{singularity reduced} if $\prod_{j=1}^m c_j$ is not $1$.

By \cite{MR664324} and \cite[Proposition 3.6]{MR2551759}, we know that there is a deformation $X'$ of $X$ such that $\sigma$ deforms to $\sigma'$, an automorphism of $X'$ such that every cycle of nodes with $\prod_{j=1}^m c_j=1$ is smoothed and the action of $\sigma$ and $\sigma'$ on $\CC_\nu^{3g-3}$ and $\CC_{\nu'}^{3g-3}$ have the same eigenvalues. In particular, one will satisfy Reid--Shepherd-Barron--Tai if and only if the other does.

Now, we fix a pair $(X,\sigma)$ that is singularity reduced and fails the Reid--Shepherd-Barron--Tai inequality.  On $C$, the stabilization, the induced automorphism $\sigma_C$ must either fix all of the nodes or else exchange a single pair of them.  We look at what the action does on the components.  In \cite[Proposition 6.9]{MR2639318} the proof of \cite[Proposition 3.8]{MR2551759} is adapted to the situation of $\overline{\ZM{g}}$, and this proof goes through verbatum, telling us that the action fixes each component of the stable model.  Now, we recall that

\begin{theorem}[{\cite[Page 36]{MR664324}}]
Assume that $(X,\sigma)$ is singularity reduced and fails the Reid--Shepherd-Barron--Tai inequality.  Denote by $\varphi_j$ the induced automorphism on the normalization $C_j^\nu$ of the irreducible component $C_j$ of the stabilization $C$ of $X$.  Then the pair $(C_j^\nu,\varphi_j)$ is one of the following:
\begin{enumerate}
 \item $C_j^\nu$ rational,											and the order of $\varphi_j$ is 2 or 4,
 \item $C_j^\nu$ elliptic,											and the order of $\varphi_j$ is 2,4,3 or 6,
 \item $C_j^\nu$ hyperelliptic of genus 2,			and $\varphi_j$ is the hyperelliptic involution,
 \item $C_j^\nu$ bielliptic of genus 2,					and $\varphi_j$ is the associated involution,
 \item $C_j^\nu$ hyperelliptic of genus 3,			and $\varphi_j$ is the hyperelliptic involution, and
 \item $C_j^\nu$ arbitary,											and $\varphi_j$ is the identity.
\end{enumerate}
\end{theorem}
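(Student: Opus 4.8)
The plan is to derive this from Harris and Mumford's classification (\cite[Page 36]{MR664324}) by isolating the contribution of each component of the stable model to the Reid--Shepherd-Barron--Tai age of $\sigma$; once that is done, the argument is the same as in \cite{MR664324} and \cite{MR2639318}, the only new point being that the $\ZZ_2^2$-structure plays no role. Recall that $\sigma$, being singularity reduced and failing the Reid--Shepherd-Barron--Tai inequality, fixes every irreducible component of the stabilization $C$ (established just above, following \cite[Proposition 6.9]{MR2639318}), so each $\varphi_j$ is a well-defined finite-order automorphism of $C_j^\nu$. After replacing $\sigma$ by a suitable power and adjusting the primitive root of unity accordingly, as in \cite[\S 6]{MR2639318}, we may assume that the age of $\sigma$ acting on $\CC^{3g-3}_\nu$ is strictly less than $1$.

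First I would split this age along the $\sigma$-invariant decomposition of $\CC^{3g-3}_\nu$ into its node subspaces and its component subspaces $H^1(C_j^\nu,T_{C_j^\nu}(-D_j))$, where $D_j$ is the preimage in $C_j^\nu$ of the nodes of $C$ lying on $C_j$. The age of $\sigma$ equals the sum of the ages of its restrictions to these summands, and every one of those ages is nonnegative; hence each is strictly less than $1$, and in particular, for every $j$, the age of $\varphi_j$ on $H^1(C_j^\nu,T_{C_j^\nu}(-D_j))$ is strictly less than $1$. The remaining input is purely local: for a smooth curve $Y$ of genus $h$, a reduced effective divisor $\Delta$ on $Y$, and a finite-order automorphism $\varphi$ of $(Y,\Delta)$, the age of $\varphi$ on $H^1(Y,T_Y(-\Delta))\cong H^0(Y,2K_Y+\Delta)^{\vee}$ is at least $1$ unless $\varphi=\mathrm{id}$ or $(Y,\varphi)$ appears in the list of the statement. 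This is exactly Harris and Mumford's eigenvalue count: diagonalize $\varphi$ at its fixed points, express its eigenvalues on $H^0(Y,2K_Y+\Delta)$ in terms of the quotient $Y/\langle\varphi\rangle$ and the ramification via Riemann--Hurwitz and the holomorphic Lefschetz formula, and bound the resulting sum of fractional parts from below; the bound forces $h\le 3$ with the small orders listed, and for $h\in\{2,3\}$ leaves only the hyperelliptic and bielliptic involutions. Applying this to each $\varphi_j$ and combining with the previous sentence gives the theorem. This eigenvalue estimate is the only substantial ingredient, and the step I would expect to be the main obstacle were one to reprove it from scratch; here it is simply quoted.

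The one thing genuinely special to the present setting, and all that really needs checking, is that the Prym$/\ZZ_2^2$-data $(\eta_1,\eta_2,\beta_1,\beta_2)$ is invisible to this computation. Indeed, $\eta_1$ and $\eta_2$ enter the versal deformation space only by determining which nodes are exceptional and through the substitutions replacing the corresponding deformation parameters $\alpha_i$ by $\alpha_i^2$ (and, after passing to the quotient by quasi-reflections, the analogous substitution at elliptic-tail nodes); each of these changes is confined to node subspaces and leaves the component subspaces $H^1(C_j^\nu,T_{C_j^\nu}(-D_j))$ unchanged. Consequently the contribution of each component to the age, hence the classification of the pairs $(C_j^\nu,\varphi_j)$, is inherited verbatim from the stable-curve case of Harris and Mumford, which is the assertion of the theorem.
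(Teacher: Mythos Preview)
Your sketch is correct, but there is nothing to compare it against: the paper does not prove this statement. It is stated as a quoted result from Harris--Mumford \cite[Page~36]{MR664324}, introduced by ``we recall that,'' and no argument is given. The paper's implicit justification for quoting it verbatim is exactly the point you make in your last paragraph: the decomposition of the versal deformation space of a $\ZZ_2^2$-curve differs from that of its stable model only in the node summands (via the substitutions $t_i=\alpha_i^2$ at exceptional nodes), while the component summands $H^1(C_j^\nu,T_{C_j^\nu}(-D_j))$ are literally the same, so the Harris--Mumford eigenvalue bound on each component carries over unchanged.

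In that sense your write-up supplies what the paper leaves tacit, and your identification of the one genuinely new point---that the $\ZZ_2^2$-data is invisible to the component contributions---is exactly right.
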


As pointed out in \cite[Proposition 3.10]{MR2551759}, this rules out the possibility of nodes being exchanged, so the automorphism must fix all nodes and all components on the stable curve.

\begin{proposition}[{\cite[Proposition 6.12]{MR2639318}}]
In the same situation as above, set $D_j$ to be the divisor of the marked points on $C_j^\nu$ that are preimages of nodes.  Then the triples $(C_j^\nu,D_j,\varphi_j)$ are one of the following types, and the contribution to the left hand side of the Reid--Shepherd-Barron--Tai inequality are at least $w_j$:
\begin{enumerate}
 \item $C_j^\nu$ arbitary,											$\varphi_j$ is the identity, 											and $w_j=0$,
 \item Elliptic tails: $C_j^\nu$ is elliptic, $D=p_1^+$ which is fixed by $\varphi_j$, $\varphi_j$ has order 2,3,4 or 6, and $w_j$ is, respectively, $0$, $\frac{1}{3}$, $\frac{1}{2}$ and $\frac{1}{3}$.
 \item Elliptic ladder: $C_j^\nu$ is elliptic and $D=p_1^++p_2^+$, with both points fixed, the automorphism is of order $2$, $3$, or $4$ and $w_j$ is, respectively, $\frac{1}{2}$, $\frac{2}{3}$, and $\frac{3}{4}$
 \item Hyperelliptic tail: $C_j^\nu$ has genus 2, $\varphi_j$ is the hyperelliptic involution, and $D_j=p_1^+$ fixed by $\varphi_j$.  Then $w_j=\frac{1}{2}$.
\end{enumerate}
\end{proposition}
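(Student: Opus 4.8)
The plan is to follow the proof of \cite[Proposition 6.12]{MR2639318} essentially verbatim, the only genuinely new point being to check that the presence of two Prym structures $\eta_1,\eta_2$ on a component $C_j$ creates no case beyond those arising for $\overline{\R_g}$. The first step is to reduce the bookkeeping to a single component. Since $(X,\sigma)$ is singularity reduced and, by the discussion preceding the statement, $\sigma$ fixes every component and every node of the stabilization $C$, the space $\CC_\nu^{3g-3}$ decomposes $\sigma$-equivariantly as the direct sum of the node-smoothing lines (one for each node of $C$, with the coordinate replaced by its square exactly when the node lies in $\Delta_{X_1}^c\cup\Delta_{X_2}^c$, and further replaced by its square at elliptic tail nodes after the quotient by the group $H$ generated by elliptic tail involutions) together with the component moduli $H^1(C_j^\nu,T_{C_j^\nu}(-D_j))$. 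Writing the left-hand side of the Reid--Shepherd-Barron--Tai inequality as a sum over components, distributing the contribution of each node equally between the two components it joins (and fully to $C_j$ if the node is a self-node of $C_j$), defines $w_j$ as the $\varphi_j$-contribution attached to $C_j$; it is this quantity we bound from below.

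Next, using the classification in the theorem quoted just above, I would enumerate the possibilities for $(C_j^\nu,\varphi_j)$ with $\varphi_j\neq\mathrm{id}$: $C_j^\nu$ rational with $\varphi_j$ of order $2$ or $4$; $C_j^\nu$ elliptic with $\varphi_j$ of order $2,3,4$ or $6$; $C_j^\nu$ hyperelliptic of genus $2$ or $3$ with $\varphi_j$ the hyperelliptic involution; or $C_j^\nu$ bielliptic of genus $2$ with its involution. Imposing that $D_j$ consists of $\varphi_j$-fixed points (forced because $\sigma$ fixes the nodes of $C$) and that the resulting pointed curve sits as a subcurve of a genus-$g$ stable curve, the rational case, the genus-$3$ hyperelliptic case and the genus-$2$ bielliptic case either cannot occur or already contribute at least $1$ to the sum on their own, exactly as in \cite{MR664324} and \cite[Proposition 3.10]{MR2551759}. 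What remains are the four listed triples.

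For each of these four I would compute the $\varphi_j$-eigenvalues on $H^1(C_j^\nu,T_{C_j^\nu}(-D_j))$ together with the half-weights of the node(s) at which $C_j$ attaches. For an elliptic tail, $H^1(E,T_E(-p))$ is $1$-dimensional: the order-$2$ involution is a quasi-reflection and has already been quotiented away, so $w_j=0$; an order-$4$ (resp.\ order-$6$) automorphism, after the quotient by $H$ has replaced the modulus coordinate $\alpha$ by $\alpha^2$, acts by $-1$ (resp.\ by a primitive cube root of unity), giving $w_j\ge\frac12$ (resp.\ $w_j\ge\frac13$), and an honest order-$3$ automorphism acts by a primitive cube root of unity, giving $w_j\ge\frac13$. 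For an elliptic ladder, $H^1(E,T_E(-p_1-p_2))$ is $2$-dimensional, and adding the half-weights of the two ladder nodes one gets $w_j\ge\frac12,\frac23,\frac34$ for orders $2,3,4$. For a genus-$2$ hyperelliptic tail, the hyperelliptic involution acts by $-1$ on a suitable coordinate, giving $w_j\ge\frac12$. In each case I would also check that restricting $\eta_1,\eta_2$ to $C_j$ changes nothing: their only effect is to turn certain attaching nodes into exceptional ones, which replaces a coordinate by its square and can only raise the corresponding half-weight, and to restrict which automorphisms of $C_j^\nu$ survive, which can only raise the minimum.

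The main obstacle is the explicit eigenvalue bookkeeping in the elliptic-ladder and genus-$2$ hyperelliptic-tail cases, where the $\varphi_j$-action on the component modulus must be combined correctly with its action on the node-smoothing coordinates after the various squarings, and where one must be careful that the quotient by $H$ has been performed before the count is taken; none of this is conceptually new relative to \cite{MR2639318}, but it is where sign and root-of-unity errors creep in. A secondary point requiring care is confirming that the two Prym structures genuinely do not interact with $\varphi_j$: here one uses that the hyperelliptic and elliptic involutions act trivially on the $2$-torsion of the relevant Jacobian, so that $\varphi_j$ automatically preserves $\eta_1|_{C_j}$ and $\eta_2|_{C_j}$ and the entire computation is local to the coordinates listed, reducing it to the one already carried out for $\overline{\R_g}$.
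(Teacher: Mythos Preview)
The paper does not actually give a proof of this proposition: it is stated with the attribution \cite[Proposition~6.12]{MR2639318} and invoked as a quoted result, with no proof environment following it. Your proposal is therefore more detailed than what the paper itself supplies; you are essentially reconstructing the Farkas--Ludwig argument and adding the verification that the second Prym structure $\eta_2$ introduces no new cases, which is correct and in the same spirit as the surrounding discussion, but strictly speaking the paper's ``proof'' is simply the citation.
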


With a bit of case by case work, essentially \cite{MR2639318} Propositions 6.13, 6.14, 6.15 and 6.16, we can see that hyperelliptic tails, elliptic ladders, and elliptic tails of order 4 do not occur, and that there must, in fact, be at least one elliptic tail of order 3 or 6, giving us our restrictions on the curve.  Now, we look to the line bundles.  Because the automorphism must pull back the line bundle to itself on the elliptic curve, it must be trivial on the elliptic tail, and this must hold for both of the Prym line bundles.  Thus, if we start with $(X,\sigma)$ failing Reid--Shepherd-Barron--Tai, then we can deform to a singularity reduced pair $(X',\sigma')$ such that the Reid--Shepherd-Barron--Tai value is constant.  The pair $(X',\sigma')$ must have an elliptic tail with $j$ invariant $0$, the automorphism must be of order 3 or 6, and $\eta_1,\eta_2$ must both be trivial along it.  Thus:

\begin{proposition}
Fix $g\geq 4$.  A point $(X_1,X_2,\eta_1,\eta_2,\beta_1,\beta_2)\in\overline{\ZZM{g}}$ is a non-canonical singularity if and only if $X_\Delta$ has an elliptic tail $C_j$ with $j$-invariant $0$ and $\eta_1|_{C_j}\cong \eta_2|_{C_j}\cong \sh{O}_{C_j}$.
\end{proposition}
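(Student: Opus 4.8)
The plan is to derive this characterization by assembling the deformation-theoretic material already set up, following Section~6 of \cite{MR2639318}: the non-canonical locus is controlled by the Reid--Shepherd-Barron--Tai (RSBT) criterion applied to the $\Aut$-action on $\CC_\nu^{3g-3}$ (having quotiented out the quasi-reflection subgroup $H$ generated by elliptic tail involutions), so I need to (i) exhibit a failing automorphism when the elliptic-tail hypothesis holds, and (ii) show conversely that any failing automorphism produces such a tail with both line bundles trivial on it.

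For sufficiency I would argue directly. Given an elliptic tail $C_j$ of $X_\Delta$ with $j(C_j)=0$ and $\eta_1|_{C_j}\cong\eta_2|_{C_j}\cong\sh{O}_{C_j}$, the order-$6$ automorphism $\sigma$ of the pointed curve $(C_j,p_j)$ extends by the identity to an automorphism of $(X_1,X_2,\eta_1,\eta_2,\beta_1,\beta_2)$ — the triviality of $\eta_1,\eta_2$ on $C_j$ is exactly what makes $\sigma^*\eta_i\cong\eta_i$. In the decomposition of $\CC_\alpha^{3g-3}$ recorded above, $\sigma$ is nontrivial only on the two coordinates attached to $C_j$, the modulus direction $H^1(C_j^\nu,T_{C_j^\nu}(-p_j))$ and the smoothing parameter of the elliptic tail node; after quotienting by $H$ (which contains $\sigma^3$) the smoothing coordinate is squared, and $\sigma$ descends to an order-$3$ map scaling a two-dimensional subspace by a single primitive cube root of unity, exactly as in \cite{MR2639318}. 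The RSBT sum is then $\tfrac13+\tfrac13<1$, so the criterion fails and the point is non-canonical; the case where only an order-$3$ automorphism is available is identical.

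For necessity I would start from an automorphism $\sigma$ whose action on $\CC_\nu^{3g-3}$ violates the RSBT inequality (one exists once the point is non-canonical, by the quasi-reflection proposition), replace $(X,\sigma)$ by a singularity-reduced pair $(X',\sigma')$ with the same RSBT value via \cite{MR664324} and \cite[Proposition~3.6]{MR2551759}, and then apply the classification theorem of \cite{MR664324} together with \cite[Proposition~3.10]{MR2551759} to force $\sigma'$ to fix every component and node of the stable model, with each normalized component of one of the four types in the cited analogue of \cite[Proposition~6.12]{MR2639318}, contributing at least $w_j$ to the RSBT sum. The case analysis — the analogues of \cite[Propositions~6.13--6.16]{MR2639318} — then eliminates hyperelliptic tails, elliptic ladders and order-$4$ elliptic tails and forces at least one elliptic tail $C_j$ carrying an automorphism $\varphi_j$ of order $3$ or $6$, which necessarily has $j(C_j)=0$. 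Since $\varphi_j$ acts on $\J(C_j)[2]$ with order exactly $3$, cyclically permuting the three nonzero classes, the only $\varphi_j$-invariant element is $\sh{O}_{C_j}$; as $\varphi_j$ must fix $\eta_k|_{C_j}$ for $k=1,2$ (a $2$-torsion class, since no exceptional component meets an elliptic tail), this gives $\eta_1|_{C_j}\cong\eta_2|_{C_j}\cong\sh{O}_{C_j}$, and $X_\Delta$ inherits this configuration from $X'_\Delta$ because the two were joined by a deformation respecting the line bundles.

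The main obstacle is the case analysis in the necessity direction — ruling out the ``dangerous'' component types and isolating the order-$3$/$6$ elliptic tail — which is also where the hypothesis $g\geq4$ is essential. I expect no genuinely new difficulty beyond \cite{MR2639318}, however, since $\overline{\ZZM{g}}$ is two of the components of $\overline{\R_g}\times_{\overline{\M_g}}\overline{\R_g}$ and the only new ingredient, the behavior of the pair $\eta_1,\eta_2$ on the elliptic tail, reduces to the single-$\eta$ statement applied twice.
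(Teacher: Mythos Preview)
Your proposal is correct and follows essentially the same approach as the paper: sufficiency via the explicit order-$6$ (or $3$) elliptic-tail automorphism failing RSBT with value $\tfrac13+\tfrac13$, and necessity via singularity reduction, the Harris--Mumford classification, the case analysis of \cite[Propositions~6.13--6.16]{MR2639318} to isolate an order-$3$/$6$ elliptic tail, and then the observation that $\varphi_j$-invariance of $\eta_k|_{C_j}$ forces triviality. Your remark that the order-$3$ action on $\J(C_j)[2]$ cyclically permutes the nonzero classes is a slightly more explicit version of the paper's ``the automorphism must pull back the line bundle to itself on the elliptic curve, it must be trivial,'' but the argument is the same.
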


\begin{proof}[Proof of Theorem \ref{extensiontheorem}]
Let $\omega$ be a pluricanonical form on $\overline{\ZZM{g}}^{i,reg}$.  We want to show that it lifts to a desingularization of some neighborhood of any point $(X_1,X_2,\eta_1,\eta_2,\beta_1,\beta_2)\in \overline{\ZZM{g}}^i$.  Because this can be done for canonical singularities, we assume that $(X_1,X_2,\eta_1,\eta_2,\beta_1,\beta_2)$ is a general non-canonical singularity, and thus $X_\Delta=C_1\cup_p C_2$ where $(C_1,p)\in \M_{g-1,1}$ and $(C_2,p)\in \M_{1,1}$ with $j(C_2)=0$.  We also assume that $\eta_1|_{C_2}\cong \eta_2|_{C_2}\cong \sh{O}_{C_2}$ and $\eta_i|_{C_1}$ are two arbitrary line bundles on $C_1$, so that we are on a hypersurface in $\Delta_{g-1,g-1}$.  We consider the pencil $\phi:\overline{\M_{1,1}}\to \overline{\ZZM{g}}^i$ given by $\phi(C,p)=C_1\cup_p C$ and line bundles $\eta'_i$ trivial on $C$ and isomorphic to $\eta_i|_{C_i}$ on $C_i$.  As $\phi(\overline{\M_{1,1}})$ does not intersect the ramification locus, then just as in \cite{MR664324} pages 41-44, we can construct an open neighborhood of the pencil, $S$, such that the restriction of $\overline{\ZZM{g}}^i\to\overline{\M_g}$ to $S$ is an isomorphism and every pluricanonical form on the smooth locus extends to a resolution $\hat{S}$ of $S$.  For the arbitrary case, with more than one node, $\omega$ will extend locally to a desingularization, just as in \cite[Theorem 4.1]{MR2551759}.
\end{proof}

Then, Theorem \ref{extensiontheorem} in fact implies the same result for $\overline{\KM{g}}^i$.  This is because $\overline{\ZZM{g}}^i\to\overline{\KM{g}}^i$ is a quotient by $\PSL_2(\FF_2)$.  The action is free except for along $\Delta_{I,III}\cup\Delta_{III,I}\cup\Delta_{III,III}^{\diag}$, where the stabilizer of a point is $\ZZ/2\ZZ$.  Looking at the Reid--Shepherd-Barron--Tai criterion for $m=2$, we find that either the pluricanonical forms extend or we have a quasi-reflection, in which case the pluricanonical forms will also extend.  So, either way, we can see that what we get are the invariants: $H^0(\overline{\KM{g}}^{i,reg},K^{\otimes\ell})\cong H^0(\overline{\ZZM{g}}^i,K^{\otimes \ell})^{\PSL_2(\FF_2)}$, and so, because we can also do this for partial resolutions of $\overline{\ZZM{g}}^i$, we can do this for any resolution $\widehat{\KM{g}}^i$.

We conclude with a statement about the birational geometry of these moduli spaces, justified by the above

\begin{theorem}
\label{maintheorem}
For any $g$, $\overline{\KM{g}}^i$ has general type if there exists a single effective divisor $D\equiv a\lambda-\sum_T b_{\Delta_T}\Delta_T$ where $T$ runs over all boundary components, such that all the ratios $\frac{a}{b_T}$ are less than $\frac{13}{2}$ and the ratios $\frac{a}{b_{II,III,III}}$, $\frac{a}{b_{1,g-1,1:g-1}}$, $\frac{a}{b_{1,1,1}}$, $\frac{a}{b_{g-1,g-1,g-1}}$, $\frac{a}{b_{1,1:g-1,1:g-1}}$, $\frac{a}{b_{g-1,1:g-1,1:g-1}}$, and $\frac{a}{b_{1:g-1,1:g-1,1:g-1}}$ are less than $\frac{13}{3}$.
\end{theorem}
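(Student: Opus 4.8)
The plan is to reduce the statement to the bigness of $K_{\overline{\KM{g}}^i}$, regarded as a $\mathbb{Q}$-Cartier divisor, and then to deduce that bigness from the slope of $D$ by the method of Eisenbud--Harris--Mumford for $\overline{\M_g}$, in the form used for $\overline{\R_g}$ in \cite{MR2639318}. Throughout we assume $g\geq 4$, so that the extension results are available. The first point is that $\overline{\KM{g}}^i$ is normal, projective, and has only finite quotient singularities: locally it is the versal deformation space $\CC_\alpha^{3g-3}$ divided first by the finite group $\Aut(X_1,X_2,\eta_1,\eta_2,\beta_1,\beta_2)$ and then by $\PSL_2(\FF_2)$. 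Hence $K_{\overline{\KM{g}}^i}$ is $\mathbb{Q}$-Cartier, the singular locus has codimension at least $2$ so the space of pluricanonical sections does not change on passing to the smooth locus, and by Theorem~\ref{extensiontheorem} together with its $\PSL_2(\FF_2)$-equivariant consequence these sections agree with the pluricanonical sections of any resolution $\widehat{\KM{g}}^i$. Therefore $\overline{\KM{g}}^i$ is of general type precisely when $K_{\overline{\KM{g}}^i}$ is big.

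Next I would write this canonical class down explicitly. Let $\pi\colon\overline{\KM{g}}^i\to\overline{\M_g}$ be the map forgetting the cover; it is finite and, by the proposition above, simply ramified exactly along the part of $\Delta_{II,III,III}$ lying in $\overline{\KM{g}}^i$. Riemann--Hurwitz gives $K_{\overline{\KM{g}}^i}=\pi^*K_{\overline{\M_g}}+\Delta_{II,III,III}$, and feeding in $K_{\overline{\M_g}}=13\lambda-2\delta_0-3\delta_1-2\delta_2-\cdots-2\delta_{\lfloor g/2\rfloor}$ together with the pullback identities $\pi^*\delta_0=\Delta_{I,II,II}+\Delta_{I,III,III}+\Delta_{II,II,II}+2\Delta_{II,III,III}$ and $\pi^*\delta_j=\Delta_{j,g-j,j:g-j}+\Delta_{j,j,j}+\Delta_{g-j,g-j,g-j}+\Delta_{j,j:g-j,j:g-j}+\Delta_{g-j,j:g-j,j:g-j}+\Delta_{j:g-j,j:g-j,j:g-j}$ reproduces the formula of the preceding corollary: $K_{\overline{\KM{g}}^i}\equiv 13\lambda-\sum_T\kappa_T\Delta_T$, where $T$ runs over the boundary divisors of $\overline{\KM{g}}^i$, each $\kappa_T$ equals $2$ except that $\Delta_{II,III,III}$ and the six divisors over $\delta_1$, namely $\Delta_{1,g-1,1:g-1}$, $\Delta_{1,1,1}$, $\Delta_{g-1,g-1,g-1}$, $\Delta_{1,1:g-1,1:g-1}$, $\Delta_{g-1,1:g-1,1:g-1}$ and $\Delta_{1:g-1,1:g-1,1:g-1}$, carry $\kappa_T=3$.

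Finally I would run the slope estimate. Given an effective divisor $D\equiv a\lambda-\sum_T b_T\Delta_T$ (so in particular $a,b_T>0$) satisfying the stated inequalities, the hypotheses say exactly that $a/b_T<13/\kappa_T$ for every $T$; hence $\max_T(\kappa_T/b_T)<13/a$, and I may pick a rational $c$ with $\max_T(\kappa_T/b_T)<c<13/a$. Then
\[K_{\overline{\KM{g}}^i}\equiv c\,D+(13-ca)\,\pi^*\lambda+\sum_T\bigl(c\,b_T-\kappa_T\bigr)\Delta_T,\]
and by the choice of $c$ one has $13-ca>0$ and $c\,b_T-\kappa_T\geq 0$ for all $T$. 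Since $\lambda$ is big and nef on $\overline{\M_g}$ (it is the pullback of an ample class under the generically finite morphism to the Satake compactification), $\pi^*\lambda$ is big on $\overline{\KM{g}}^i$; adding to the positive multiple $(13-ca)\pi^*\lambda$ of a big class the effective $\mathbb{Q}$-divisors $c\,D$ and $\sum_T(c\,b_T-\kappa_T)\Delta_T$ leaves the class big. Therefore $K_{\overline{\KM{g}}^i}$ is big and, by the reduction above, $\overline{\KM{g}}^i$ is of general type.

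The main obstacle is the first step rather than the last: the quotient singularities of $\overline{\KM{g}}^i$ need not be canonical, so one cannot simply compare $K_{\overline{\KM{g}}^i}$ with the canonical class of a resolution, and it is exactly the extension theorem of the previous section (in its $\PSL_2(\FF_2)$-invariant form) that makes the reduction to bigness legitimate. The rest is divisor-class bookkeeping, the one point requiring care being the correct identification of the seven boundary divisors whose coefficient in $K_{\overline{\KM{g}}^i}$ is $3$ rather than $2$ — the ramification divisor $\Delta_{II,III,III}$ and the six components lying over $\delta_1$ — since these force the sharper bound $13/3$.
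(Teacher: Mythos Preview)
Your argument is correct and is precisely the standard slope argument the paper intends when it says the theorem is ``justified by the above'' --- the paper itself gives no explicit proof beyond that phrase. You have correctly isolated the three inputs (the extension theorem reducing general type to bigness of $K$, the canonical class from the preceding corollary with coefficients $\kappa_T\in\{2,3\}$, and the bigness of $\pi^*\lambda$) and assembled them via the decomposition $K\equiv cD+(13-ca)\pi^*\lambda+\sum_T(cb_T-\kappa_T)\Delta_T$, exactly as in \cite{MR664324} and \cite{MR2639318}.
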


This allows us to begin computing the classes of divisors on the Klein moduli space to determine its Kodaira dimension, and thus begin the study of the birational geometry of these spaces.

\bibliographystyle{alpha}
\bibliography{Klein4Moduli}
\end{document}